\documentclass{amsart}
\usepackage{graphicx}
\usepackage[bookmarksnumbered, colorlinks, plainpages]{hyperref}
\setcounter{page}{1}
\usepackage{color}
%
%
%
 \newtheorem{thm}{Theorem}[section]
 \newtheorem{cor}[thm]{Corollary}
 \newtheorem{lem}[thm]{Lemma}
 \newtheorem{prop}[thm]{Proposition}
 \theoremstyle{definition}
 
 \theoremstyle{remark}
 \newtheorem{rem}[thm]{Remark}
 \newtheorem{ex}{Example}
 \numberwithin{equation}{section}
\newcommand{\be}{\begin{equation}}
\newcommand{\ee}{\end{equation}}

\def\zR{\mathbb R}

\def\zN{\mathbb N}

\begin{document}

%
%
%
%
%
%
%
%
%

\title[Asymptotic formulae and inequalities ]
 {Asymptotic formulae and inequalities for point spectrum in max algebra}

%
%
%
%
%
%
%

\maketitle

\indent \indent \indent \indent \authors{S. M. Manjegani, A. Peperko$^*$, H. Shokooh Saljooghi}

\bigskip

\begin{abstract}
We prove  new  explicit asymptotic formulae between  (geometric) eigenvalues in max-algebra and  classical distinguished eigenvalues of nonnegative matrices, which are useful tools  for transferring results between both settings. We establish new inequalities for both  types of eigenvalues of Hadamard products and Hadamard weighted geometric means of nonnegative matrices. Moreover, 
a version of the spectral mapping theorem for the distinguished spectrum is pointed out.
\end{abstract}

\bigskip

\noindent {\bf Keywords:} nonnegative matrices, eigenvalues in max algebra, distinguished eigenvalues, continuity properties, inequalities, Hadamard products, spectral mapping theorem

\bigskip 

\noindent {\bf MSC 2010:} 15A80; 15A42; 15A60

\date{January 21, 2021}


\section{ Introduction and preliminaries }

Max algebra, together with  its isomorphic versions, provides an attractive way of describing a class of non-linear problems appearing for instance in manufacturing and transportation scheduling, discrete event-dynamic systems, combinatorial optimization, information technology, graph theory, mathematical physics and DNA analysis (see e.g. \cite{BCOQ92,Bapat,Butkovic,BGC-G09,GOW19,GPZ20,atwork,Litvinov07,LM05,MN02,MP15,PS05}, and the references cited there). The usefulness of max algebra arises from a fact that these non-linear problems become linear when described in max algebra terminology.

Max algebra is a semialgebra over the ordered, idempotent semifield
$\mathbb{R}_+$, equipped with the operations of addition $a
\oplus b = \max(a,b)$ and classical multiplication $ab$. 
As in standard arithmetic, the operations of addition and multiplication are associative and commutative, and multiplication is distributive over addition.
Matrix and polynomial operations are defined similarly to their standard counterparts, with the max operation replacing the standard summation.
In particular, for two $n \times n$ nonnegative matrices $A$ and $ B $ 
the max algebra product $A\otimes B$ is defined by
$$(A\otimes B) _{ij} = \max _{l=1,\ldots , n} a_{il} b_{lj} $$
for all $i,j =1, \ldots , n$  (the max algebra product  $A\otimes B$ differs from the Kronecker product of matrices and the notation $\otimes$ is quite standard in max algebra). The $m$th  power in max algebra of $A$
is denoted by $A^m _{\otimes}$. More precisely,
$$[A^m _{\otimes}] _{ij}  =  \max _{ i_1 , \ldots , i_{m-1} \in \{1,\ldots , n\}} a_{i i_1}a_{i_1 i_2}\cdots a_{i_{m-1} j}$$
for all $i,j =1, \ldots , n$ (and thus $[A^m _{\otimes}] _{ij}$ equals the heaviest path from $i$ to $j$ in a suitably defined graph associated to $A$, for details see e.g. \cite{Butkovic,Schneider, MP15}).  

Max algebra is isomorphic to max-plus algebra, which is the semifield $\mathbb{R} \cup \{-\infty \}$, where addition is replaced by maximum and multiplication  by addition, and also to min-plus algebra $\mathbb{R} \cup \{\infty \}$, where addition is replaced by minimum and multiplication by addition. The term  tropical algebra is sometimes used for any of the above isomorphic semifields.  Tropical algebra is a part of a broader branch of mathematics,
called  ``idempotent mathematics'' 
(see e.g. \cite{Litvinov07,LM05} and the references cited there). For other infinite dimensional generalizations see e.g. \cite{AGW,MN02,MP17,MP18,P06}.

\medskip

It is well known that spectral theory for geometric eigenvalues in max-algebra is closely related to Perron-Frobenius spectral theory for classical distinguished eigenvalues (corresponding to classical nonnegative  eigenvectors) of a given nonnegative matrix (see e.g. \cite{Butkovic,BGC-G09,Cuninghame,Gaubert,Schneider,MP15}). In the current article  we prove further results  on  (geometric) eigenvalues in max-algebra and on classical distinguished eigenvalues. Moreover, we point out in Section 2 some apparently not known explicit asymptotic formulae between both kinds of eigenvalues, which can be used for transferring results from one setting to the other (and vice versa). 
In Section 3 we prove certain apparently new inequalities for both kinds of eigenvalues of Hadamard (Schur) products of matrices.

 Let $A=(a_{ij})$, $B=(b_{ij})\in \zR^{n\times n}$. We write $A\geq B$, if $a_{ij} \ge b_{ij}$ for every $1\le i, j\le n$. A matrix $A$ is  called nonnegative if $a_{ij}\geq 0$. The set of all $n \times n$ nonnegative matrices is denoted by $\zR_+^{n\times n}$.

For $A, B\in \zR_+^{n\times n}$ and $x\in \zR_+^{n}$, the products and sum in max algebra are defined as follows: 
\[
(A\otimes B)_{ij}= \max_{k} a_{ik} b_{kj}, \; \;\; (A\otimes x)_i = \max _j a_{ij} x_j, \;\; (A\oplus B)_{ij}= \max \{a_{ij}, b_{ij}\}.
\]
We also denote
\[
\|A\|= \max _{i,j} a_{ij},~~~ \|x\|= \max _{i} x_{i}.
\]  
By $A^j _{\otimes}$ we denote the $j$th max power of $A$. By $AB$, $Ax$, $A^j$ we denote the standard products and powers, respectively. 

Let $A\circ B$ denote the Hadamard (Schur) product of matrices and $A^{(t)}$ be the Hadamard (Schur) power for $t>0$, i.e.,
$[A\circ B]_{ij}=a_{ij}b_{ij}$ and $A^{(t)}=[a_{ij} ^t]$. Similarly we denote the Hadamard (Schur) power $x^{(t)}$ of a vector $x \in   \zR_+ ^{n}$.

The role of the spectral radius of $A\in \zR^{n\times n} _+$ in max algebra is played by the maximum cycle geometric mean $r_{\otimes}(A)$, which is defined by
\[
r_{\otimes}(A)=\max\Bigl\{(a_{i_1 i_k}\cdots a_{i_3 i_2}a_{i_2 i_1})^{1/k}: k \in \zN \;\;\mathrm{and}\;\; i_1,\dots,i_k\in\{1,\dots,n\}\Bigr\},\]
and it is equal to

\[r_{\otimes}(A)=\]\[\max\Bigl\{(a_{i_1 i_k}\cdots a_{i_3 i_2}a_{i_2 i_1})^{1/k}: k \le n, \; i_1,\dots,i_k\in\{1,\dots,n\}~ 
\mbox{ are distinct} 
\Bigr\}.\]
Recall that also a Gelfand type formula holds (see e.g. \cite{MP15} and the references cited there):
\[
r_{\otimes}(A)= \lim _{j \to \infty} \|A^j _{\otimes}\|^{1/j} =  \inf _{j \in \zN} \|A^j _{\otimes}\|^{1/j}.
\]
Let $\sigma _{\otimes} (A)$ denote the set of (geometric) eigenvalues of $A$ in max algebra (the point spectrum in max algebra), i.e.,
$$\sigma _{\otimes} (A)=\{\lambda \ge 0 : A \otimes x = \lambda x \;\; \mathrm{for}\;\;\mathrm{some} \;\; x\ge 0, x\neq 0\}.$$ 
It is known that
\[r_{\otimes}(A) = \max \{\lambda : \lambda \in \sigma _{\otimes} (A)\}\]
and if $A$ is irreducible, then $r_{\otimes}(A) >0$ is the only eigenvalue of $A$ in max algebra and all the corresponding max algebra eigenvectors are strictly positive.

For $A\in \zR_+ ^{n\times n}$ and $x \in \zR_+ ^n$ let $r_{x}(A)$  denote the local spectral radius of $A$ at $x$ in max algebra, i.e., 
\begin{equation}
	\label{local}
	r_x(A)= \lim _{k\to \infty } \|A^k _{\otimes} \otimes x\|^{1/k}.
\end{equation}
It was proved in \cite[Theorem 2.4]{MP15} that the limit in (\ref{local}) exists and that
\begin{equation}
	\label{local_max}
	r_x(A) = \max \{r_{e_j} (A): x_j \neq 0, j=1, \ldots , n\},
\end{equation}
where $e_j$ denotes the $j$th standard basis vector for $j=1, \ldots , n$.

The following result from \cite[Lemma 2.1]{MP15} describes $r_{e_j}(A)$ for all $j\in\{1,\dots,n\}$. 

\begin{lem}
	\label{vladimir}
	Let $A\in \mathbb{R}_+ ^{n\times n}$, $j\in\{1,\dots,n\}$. Then $r_{e_j}(A)$ is the maximum of all $t\ge 0$ with the property {\rm (*)}:
	\label{cycles}
 there exist $c\ge 0$, $d\ge 1$ and mutually distinct indices
$i_0:=j, i_1,\dots,i_c,i_{c+1},
\dots,i_{c+d-1}\in\{1,\dots,n\}$ such that
	$$ \prod_{s=0}^{c-1}a_{i_{s+1},i_{s}}\ne 0 \;\;\; \;\;\;  \mathrm{and}\;\;\;\;\;\;  \prod_{s=c}^{c+d-1}a_{i_{s+1},i_{s}}=t^d,$$
	where we set $i_{c+d}=i_c$.
\end{lem}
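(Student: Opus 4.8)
The plan is to translate everything into the weighted digraph $G$ of $A$, in which there is an arc $p\to q$ of weight $a_{qp}$ whenever $a_{qp}\ne 0$, and to use the standard path interpretation of max powers: $[A^k_\otimes]_{ij}$ equals the maximal weight $\prod_{s=0}^{k-1}a_{u_{s+1}u_s}$ over all walks $j=u_0,u_1,\dots,u_k=i$ of length $k$ in $G$ (the empty product $1$ when $k=0$), so that $\|A^k_\otimes\otimes e_j\|$ is the maximal weight of a length-$k$ walk issuing from $j$. I would first note that the set $T:=\{t\ge 0:(*)\text{ holds}\}$ is finite (each admissible configuration uses at most $n$ distinct indices, and these determine $t$) and nonempty (it always contains $a_{jj}$, via $c=0$, $d=1$), so $t^{*}:=\max T$ is well defined; the lemma then reduces to the two inequalities $r_{e_j}(A)\ge t^{*}$ and $r_{e_j}(A)\le t^{*}$. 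Since the existence of the limit in (\ref{local}) is already part of \cite[Theorem 2.4]{MP15}, I may evaluate $r_{e_j}(A)$ along any subsequence.

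For $r_{e_j}(A)\ge t^{*}$ I may assume $t^{*}>0$. Fixing indices that realize $(*)$ for $t=t^{*}$, I would form for each $m\in\zN$ the walk that first runs along $j=i_0,i_1,\dots,i_c$ (of weight $w:=\prod_{s=0}^{c-1}a_{i_{s+1}i_s}>0$) and then $m$ times around the cycle $i_c,i_{c+1},\dots,i_{c+d-1},i_c$ (of weight $(t^{*})^{d}$ per turn). This walk has length $c+md$ and weight $w(t^{*})^{md}$, so $\|A^{c+md}_\otimes\otimes e_j\|\ge w(t^{*})^{md}$; taking $(c+md)$-th roots and letting $m\to\infty$ gives $r_{e_j}(A)\ge\lim_m w^{1/(c+md)}(t^{*})^{md/(c+md)}=t^{*}$, using $w>0$.

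For the reverse inequality, fix $k$, choose a walk $W=(j=v_0,\dots,v_k)$ of maximal weight $\omega(W)=\|A^k_\otimes\otimes e_j\|$, and assume $\omega(W)>0$, so every arc of $W$ lies in $G$. Scanning $W$ and, at the first recurrence of a vertex, splicing out the simple cycle it closes (the first recurrence always closes a \emph{simple} cycle), and iterating, reduces $W$ after finitely many steps to a simple path $S$ from $j$ (necessarily of length at most $n-1$) and produces simple cycles $\gamma_1,\dots,\gamma_r$ with $\omega(W)=\omega(S)\prod_l\omega(\gamma_l)$ and $k=|S|+\sum_l|\gamma_l|$. The crucial point is that each $\gamma_l$ certifies $\omega(\gamma_l)^{1/|\gamma_l|}\in T$: every vertex of $\gamma_l$ occurs on $W$, so there is a walk from $j$ to a vertex of $\gamma_l$ all of whose arcs lie in $G$; a shortest such walk is a simple path meeting $\gamma_l$ only in its final vertex $u$, and then taking $i_c:=u$, the preceding vertices of that path as $i_0=j,\dots,i_{c-1}$, and $\gamma_l$ traversed from $u$ as $i_c,\dots,i_{c+d-1}$ with $d:=|\gamma_l|$, yields exactly the mutually distinct indices demanded by $(*)$, with nonzero path product and cycle product $\omega(\gamma_l)$. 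Hence $\omega(\gamma_l)\le (t^{*})^{|\gamma_l|}$ for every $l$, and since $\omega(S)\le\max\{1,\|A\|\}^{\,n-1}$ we obtain $\omega(W)\le\max\{1,\|A\|\}^{\,n-1}(t^{*})^{\,k-|S|}$; for $k\ge n$ this forces $\omega(W)=0$ when $t^{*}=0$ and is at most $C\,(t^{*})^{k}$ for a constant $C$ independent of $k$ when $t^{*}>0$. Taking $k$-th roots and letting $k\to\infty$ gives $r_{e_j}(A)\le t^{*}$, which together with the previous step proves the lemma.

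The step I expect to be the real work is that ``crucial point'': arranging that \emph{all} of $i_0,\dots,i_{c+d-1}$ are mutually distinct — i.e.\ extracting a reaching path internally disjoint from the cycle $\gamma_l$ — and verifying carefully that the cycle-extraction procedure genuinely splits both the weight multiplicatively and the length additively, with all intermediate segments again having their arcs in $G$. The two limit computations, the bound $|S|\le n-1$, and the reduction to $k\ge n$ are routine.
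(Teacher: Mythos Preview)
The paper does not actually prove this lemma: it is quoted verbatim from \cite[Lemma 2.1]{MP15} and used as a black box, so there is no ``paper's own proof'' to compare against. Your argument is a correct self-contained proof of the statement via the standard walk interpretation of max powers, and the two directions are handled cleanly: the lower bound by feeding the path-plus-repeated-cycle walk into the local Gelfand formula, and the upper bound by the usual first-repetition cycle-extraction on an optimal walk, plus the observation that each extracted simple cycle is reachable from $j$ along a simple path internally disjoint from it (obtained as a shortest walk in $G$ from $j$ to the nearest vertex of the cycle). The bookkeeping you flag as ``the real work'' is exactly right and your sketch handles it; the only cosmetic point is that when you pass from $\omega(S)(t^{*})^{\,k-|S|}$ to $C\,(t^{*})^{k}$ in the case $t^{*}>0$, you are implicitly absorbing a factor $(t^{*})^{-|S|}$ with $0\le |S|\le n-1$ into $C$, which is harmless but worth stating explicitly since it depends on whether $t^{*}\lessgtr 1$.
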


In \cite[Theorem 2.7]{MP15} the following result of Gunawardena (\cite[Proposition 2.1]{Gunawardena}) was  reproved in a more linear algebraic fashion. 
\begin{thm}\label{Spec1}
	If $A\in \zR_+ ^{n\times n}$, then 
	$$
	\sigma_ {\otimes}(A)=\{\lambda: \mbox{ there exists } j\in\{1,\dots,n\}, \lambda =r_{e_j}(A)\}.
	$$
\end{thm}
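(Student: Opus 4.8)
The plan is to prove the two inclusions separately, leaning heavily on the local spectral radius machinery recalled in \eqref{local}--\eqref{local_max} and on Lemma~\ref{vladimir}.

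\smallskip

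First I would prove the inclusion $\{\lambda : \lambda = r_{e_j}(A) \text{ for some } j\} \subseteq \sigma_{\otimes}(A)$. Fix $j$ and set $\lambda = r_{e_j}(A)$. By Lemma~\ref{vladimir} there is a chain of mutually distinct indices $i_0 = j, i_1, \dots, i_c, \dots, i_{c+d-1}$ with $\prod_{s=0}^{c-1} a_{i_{s+1},i_s} \neq 0$ and $\prod_{s=c}^{c+d-1} a_{i_{s+1},i_s} = \lambda^d$, where $i_{c+d} = i_c$. The idea is to build an eigenvector $x \geq 0$, $x \neq 0$, supported on (a subset of) the indices $i_c, \dots, i_{c+d-1}$ of the terminal cycle — plus, if $\lambda > 0$, also on $i_0, \dots, i_{c-1}$ — such that $A \otimes x = \lambda x$. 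Concretely, on the cycle one assigns weights so that the max in $(A\otimes x)_{i_s}$ is attained along the cycle edge and equals $\lambda x_{i_s}$; this is the standard construction of a max-eigenvector from a critical cycle (one must check that no \emph{other} entry $a_{i_s k} x_k$ exceeds $\lambda x_{i_s}$, which is where one may need to pass to a maximal such cycle or to the irreducible block containing it). When $\lambda = 0$ one takes instead $x = e_{i_c}$-type data along a path that terminates, using $\prod_{s=c}^{c+d-1} a_{i_{s+1},i_s} = 0$. I expect this direction to be the more delicate one.

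\smallskip

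For the reverse inclusion $\sigma_{\otimes}(A) \subseteq \{r_{e_j}(A) : j\}$, suppose $A \otimes x = \lambda x$ with $x \geq 0$, $x \neq 0$. Then by an easy induction $A^k_{\otimes} \otimes x = \lambda^k x$ for all $k$, hence $\|A^k_{\otimes} \otimes x\|^{1/k} = \lambda \|x\|^{1/k} \to \lambda$, so by \eqref{local} we get $r_x(A) = \lambda$. Now invoke \eqref{local_max}: $\lambda = r_x(A) = \max\{r_{e_j}(A) : x_j \neq 0\}$, and in particular $\lambda = r_{e_j}(A)$ for (at least) one index $j$ with $x_j \neq 0$. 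This direction is essentially immediate from the quoted results.

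\smallskip

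The main obstacle is the first (``$\supseteq$'') inclusion: given the chain from Lemma~\ref{vladimir}, one must produce an honest max-eigenvector, and the naive assignment of weights along the cycle/path need not immediately satisfy $A\otimes x = \lambda x$ at every coordinate because some off-chain entry could dominate. The remedy is to choose, among all chains realizing the value $\lambda = r_{e_j}(A)$, one with the critical cycle of maximal weight within the relevant strongly connected component, and then to define $x$ by taking, along the path entering the cycle, the appropriate products (using $r_{\otimes}$ of that component equals $\lambda$), setting $x_k = 0$ off the path and cycle; the maximality then forces equality rather than strict inequality at the chain coordinates, while the zero coordinates are handled trivially. One should also separate the cases $\lambda>0$ and $\lambda = 0$, the latter being straightforward since $e_{i_c}$ (or a suitable truncation) already works once the terminal product vanishes.
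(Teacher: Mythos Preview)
The paper does not prove this statement; it is quoted as a preliminary result from \cite[Theorem~2.7]{MP15} (originally due to Gunawardena, \cite[Proposition~2.1]{Gunawardena}), so there is no proof in the present paper to compare your argument against.

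On the substance of your attempt: the inclusion $\sigma_\otimes(A)\subseteq\{r_{e_j}(A):j\}$ via $r_x(A)=\lambda$ and \eqref{local_max} is correct and is exactly the argument used in \cite{MP15}. The reverse inclusion, however, has a genuine gap. You propose to support the eigenvector $x$ only on the chain $i_0,\dots,i_{c+d-1}$ from Lemma~\ref{vladimir}, setting $x_k=0$ elsewhere and declaring that ``the zero coordinates are handled trivially''. They are not: for any index $k$ off the chain with $a_{k,i_s}>0$ for some cycle vertex $i_s$, one gets $(A\otimes x)_k\ge a_{k,i_s}x_{i_s}>0=\lambda x_k$, so $A\otimes x\neq\lambda x$. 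Including the path vertices $i_0=j,\dots,i_{c-1}$ in the support is also problematic in the opposite direction: the chain edges are $a_{i_{s+1},i_s}$, i.e.\ they leave $j$, and there is no reason for $j$ to receive an edge from the chain, which forces $(A\otimes x)_j=0$ while you want $x_j>0$. The standard construction (see \cite{MP15} or \cite[Corollary~4.2]{Schneider}) instead takes the support of $x$ to be the full \emph{initial segment} generated by the critical class containing the cycle---that is, all indices to which that class has access---and defines $x$ there via a column of the Kleene star of the suitably normalized matrix. The path from $j$ into the cycle plays no role in building the eigenvector; it only certifies, through Lemma~\ref{vladimir}, that $\lambda=r_{e_j}(A)$ is the correct value.
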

An analogue holds also for classical distinguished eigenvalues  of a nonnegative matrix. For $A\in \zR_+ ^{n\times n}$ and $x \in \zR_+ ^n$ let $\rho_{x}(A)$  denote the (classical) local spectral radius of $A$ at $x$, i.e.,  
\begin{equation}
	\label{local_def}
	\rho_{x} (A)= \limsup _{k \to \infty }\|A^k   x\|^{1/k}. 
\end{equation}
The distinguished part of the spectrum $\sigma _D (A)$ of $A$ is the set of distinguished eigenvalues of $A$, i.e.,
$$\sigma _{D} (A)=\{\lambda \ge 0 : A x = \lambda x \;\; \mathrm{for}\;\;\mathrm{some} \;\; x\ge 0, x\neq 0\}.$$ 
It holds (see  \cite[Theorem 2.16]{MP15}) that
\begin{equation}
	\label{dist_eig}
	\sigma_ {D}(A)=\{\lambda: \mbox{ there exists } j\in\{1,\dots,n\}, \lambda =\rho_{e_j}(A)\}.
\end{equation}

In Theorems \ref{Schurt}, \ref{sup_inf_t_ji}, \ref{max_potence} and \ref{MTHM1} (which are some of the main results of the article) we prove asymptotic formulae between
(geometric) eigenvalues in max-algebra and  classical distinguished eigenvalues. To be more precise, in Theorems \ref{Schurt} and \ref{sup_inf_t_ji} we show that for 
$A\in \mathbb{R}_+^{n\times n}$ and each  $i=1, ..., n $, the mapping $t\mapsto \rho _{e_i } (A^{(t)}) ^{1/t}$ is decreasing in $t\in (0, \infty)$ and we have

	$$r _{e_i }(A) =\lim _{t\to \infty } \rho _{e_i } (A^{(t)}) ^{1/t}=\inf _{t \in (0,\infty)} \rho _{e_i } (A^{(t)}) ^{1/t}=\sup_{t \in (0,\infty)}(n^{-1} \rho _{e_i } (A^{(t)})) ^{1/t} . $$
	{\color{black} In Theorem  \ref{max_potence} we show that for each   $i=1, ..., n $ we also have \label{max_potence}
	$$r _{e_i }(A) =\lim _{k \to \infty } \rho _{e_i } (A^{k} _{\otimes}) ^{1/k}  = \inf _{k \in \mathbb{N}} \rho _{e_i } (A^{k} _{\otimes}) ^{1/k}=\sup_{k \in \mathbb{N}}(n^{-1} \rho _{e_i } (A^{k} _{\otimes}) ^{1/k} .$$
	In Theorem \ref{MTHM1} we prove that for each $i \in \{1, \ldots , n\} $  it holds also that
$$\rho _{e_i }(A) =\lim _{k\to \infty }  r_{e_i } (A^{k}) ^{1/k}= \sup _{k \in \mathbb{N}}  r_{e_i } (A^{k}) ^{1/k}= \inf _{k \in \mathbb{N}} ( n r_{e_i } (A^{k}) )^{1/k} .$$
	}

\bigskip

The following results on eigenvalues of multivariable polynomials in max algebra and on distinguished eigenvalues of multivariable polynomials were proved in \cite[Theorem 3.6]{Schneider}. These results may be considered as a generalization of the spectral mapping theorem for max polynomials (see also \cite[Theorem 3.4]{MP15}) and a generalization of distinguished polynomial spectral mapping theorem. The proofs of these results are based on the fact that
commuting matrices in max algebra have a joint max-eigenvector and on an analogue of this result for distinguished eigenvectors (\cite[Theorem 3.5]{Schneider}).

\begin{thm}\label{spower}  Let $A_1, \ldots , A_m \in \zR _+ ^{n\times n}$ commute in pairs in max algebra and let  
	$p_{\otimes }(x_1, \ldots , x_m) $ be a max polynomial (of $m$ variables). Then the following properties hold.
	
	\begin{enumerate}
		\item For each $i \in \{1, \ldots , m\}  $ and $ \lambda _i \in \sigma _{\otimes} (A_i)$ there exists  $ \lambda _j \in \sigma _{\otimes} (A_j)$ for all 
		$j \in \{1, \ldots , m\}  $, $j \neq i$,  such that 
		$p_{\otimes } (\lambda _1, \ldots , \lambda _m)  \in \sigma _{\otimes } (p_{\otimes } (A _1, \ldots , A_m) )$.
		\medskip
		
		\item For each $\lambda \in  \sigma _{\otimes } (p_{\otimes } (A _1, \ldots , A_m) )$ there exist $\lambda _i \in \sigma _{\otimes} (A_i)$ for all 
		$i \in \{1, \ldots , m\}  $ such that $\lambda =  p_{\otimes } (\lambda _1, \ldots , \lambda _m)$.
	\end{enumerate}
\end{thm}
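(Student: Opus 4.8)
The plan is to deduce both parts from the existence of a common max-eigenvector for finitely many matrices that commute in max algebra (\cite[Theorem 3.5]{Schneider}), applied not on all of $\zR_+^{n}$ but on suitable invariant max-cones, namely eigencones. Write $P:=p_{\otimes}(A_1,\dots,A_m)$; expanding the polynomial, $P=\bigoplus_{\alpha}c_{\alpha}\,A^{\alpha}_{\otimes}$, a finite max-sum with nonnegative coefficients $c_{\alpha}$ over multi-indices $\alpha=(\alpha_1,\dots,\alpha_m)$, where $A^{\alpha}_{\otimes}$ denotes the max-algebra product containing $\alpha_k$ copies of $A_k$ for each $k$ --- well defined and order independent because the $A_k$ commute. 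Two elementary facts drive the argument. First, every $A_k$ commutes with $P$: $\otimes$ distributes over $\oplus$, and $A_k$ commutes with each monomial $A^{\alpha}_{\otimes}$ since it commutes with every $A_l$. Second, if $x\ge 0$, $x\neq 0$, is a common max-eigenvector, $A_k\otimes x=\mu_k x$ for $k=1,\dots,m$, then feeding $x$ through the factors of each monomial one at a time gives $A^{\alpha}_{\otimes}\otimes x=\bigl(\prod_{k}\mu_k^{\alpha_k}\bigr)x$, hence
\[
P\otimes x \;=\; \bigoplus_{\alpha}c_{\alpha}\Bigl(\prod_{k}\mu_k^{\alpha_k}\Bigr)x \;=\; p_{\otimes}(\mu_1,\dots,\mu_m)\,x ,
\]
so $p_{\otimes}(\mu_1,\dots,\mu_m)\in\sigma_{\otimes}(P)$.

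For part (1), fix $i$ and $\lambda_i\in\sigma_{\otimes}(A_i)$ and set $W=\{y\ge 0:A_i\otimes y=\lambda_i y\}$. This is a nonzero finitely generated max-cone (the solution set of a homogeneous max-linear system), each $A_k$ maps $W$ into $W$ because it commutes with $A_i$, and $A_i$ acts on $W$ as multiplication by $\lambda_i$. Applying the common-eigenvector result to the commuting operators $A_1,\dots,A_m$ restricted to $W$ produces $x\in W\setminus\{0\}$ with $A_k\otimes x=\mu_k x$ for all $k$; since $x\neq 0$, $\mu_i=\lambda_i$ and each $\mu_k\in\sigma_{\otimes}(A_k)$. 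Putting $\lambda_k:=\mu_k$ for $k\neq i$ and using the second fact above gives $p_{\otimes}(\lambda_1,\dots,\lambda_m)\in\sigma_{\otimes}(P)$. For part (2), let $\lambda\in\sigma_{\otimes}(P)$ and set $V=\{y\ge 0:P\otimes y=\lambda y\}$, again a nonzero finitely generated max-cone, invariant under every $A_k$ since each $A_k$ commutes with $P$. A common max-eigenvector $x\in V\setminus\{0\}$ of $A_1,\dots,A_m$, say $A_k\otimes x=\lambda_k x$ with $\lambda_k\in\sigma_{\otimes}(A_k)$, then satisfies $P\otimes x=\lambda x$ (because $x\in V$) and $P\otimes x=p_{\otimes}(\lambda_1,\dots,\lambda_m)\,x$ (by the second fact), so $\lambda=p_{\otimes}(\lambda_1,\dots,\lambda_m)$.

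The step doing the real work is the existence of a common max-eigenvector of commuting matrices inside a prescribed nonzero invariant finitely generated max-cone $U$, rather than only inside $\zR_+^{n}$. I would obtain this by rerunning the recursion behind \cite[Theorem 3.5]{Schneider} intrinsically in $U$: replace $U$ by the eigencone, inside $U$, of $A_1$ for the largest value of the local spectral radius $r_{y}(A_1)$ over $y\in U\setminus\{0\}$; this eigencone is again nonzero, finitely generated, and --- as $A_2,\dots,A_m$ commute with $A_1$ --- invariant under $A_2,\dots,A_m$; iterate over $A_2,\dots,A_m$; any nonzero vector of the final cone is a common max-eigenvector. I expect this localization to be the main obstacle to a fully detailed write-up. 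One cannot shortcut it by coordinatizing $U$ via a minimal generating set and quoting the unrestricted theorem: in max algebra the representation of an element in terms of a minimal generating set need not be unique, so the resulting ``compressed'' matrices need not commute. Running the recursion directly in $U$ avoids this, at the price of re-checking, exactly as in the unrestricted proof, that the relevant local spectral radius really is attained as an eigenvalue of the restricted operator (where the eventual periodicity of max-algebra powers is used). The remaining verifications --- that eigencones of max-linear systems are finitely generated and that the asserted invariances hold --- are routine.
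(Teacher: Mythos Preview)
The paper does not give its own proof of this theorem: it is quoted verbatim from \cite[Theorem 3.6]{Schneider}, and the paper only records that the argument there rests on the existence of a common max-eigenvector for commuting matrices (\cite[Theorem 3.5]{Schneider}). Your plan is exactly this route and is correct.

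One simplification worth noting. As the paper's own use of \cite[Theorem 3.5]{Schneider} in the proof of Theorem~\ref{gen_matr_sp_map} makes clear, that result already comes in the \emph{prescribed-eigenvalue} form: given commuting $B_1,\dots,B_m$ and a chosen eigenvalue $\lambda$ of one $B_i$, there is a common eigenvector $x$ with $B_i\otimes x=\lambda x$. Hence both parts follow by a direct invocation --- for part~(1) apply it to $A_1,\dots,A_m$ prescribing $\lambda_i$ for $A_i$; for part~(2) apply it to the commuting family $A_1,\dots,A_m,P$ prescribing $\lambda$ for $P$ --- followed by your ``second fact'' computation. The restriction-to-eigencone recursion you outline is not extra work beyond the cited theorem: it \emph{is} how the prescribed-eigenvalue version of \cite[Theorem 3.5]{Schneider} is established. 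Your warning that one cannot shortcut by coordinatizing a max-cone via a minimal generating set (because representations are non-unique and the compressed matrices need not commute) is correct and is precisely why the original argument also works intrinsically with invariant subsemimodules.
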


\begin{thm}\label{dist_power}  Let $A_1, \ldots , A_m \in \zR _+ ^{n\times n}$ commute in pairs  and let  
	$p(x_1, \ldots , x_m) $ be a  real polynomial (of $m$ variables) such that 
	$p (A _1, \ldots , A_m) $ is nonnegative. 
	Then the following properties hold.
	
	\begin{enumerate}
		\item For each $i \in \{1, \ldots , m\}  $ and $ \lambda _i \in \sigma _{D} (A_i)$ there exists  $ \lambda _j \in \sigma _{D} (A_j)$ for all 
		$j \in \{1, \ldots , m\}  $, $j \neq i$,  such that 
		$p (\lambda _1, \ldots , \lambda _m)  \in \sigma _{D} (p (A _1, \ldots , A_m) )$.
		\medskip
		
		\item For each $\lambda \in  \sigma _{D} (p (A _1, \ldots , A_m) )$ there exist $\lambda _i \in \sigma _{D} (A_i)$ for all 
		$i \in \{1, \ldots , m\}  $ such that $\lambda =  p(\lambda _1, \ldots , \lambda _m)$.
	\end{enumerate}
\end{thm}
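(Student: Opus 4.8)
The plan is to deduce both parts from one construction built on common nonnegative eigenvectors of the commuting family, mirroring the proof of Theorem~\ref{spower}. The basic tool is the distinguished-eigenvector analogue of the joint-eigenvector theorem, \cite[Theorem 3.5]{Schneider}: a finite family of nonnegative matrices commuting in pairs has a common nonnegative eigenvector. I would apply this tool not to the $A_j$ acting on the full orthant $\zR_+^n$, but to their compressions to eigencones, so I first record the (routine) extension: if $C\subseteq\zR_+^n$ is a pointed polyhedral cone invariant under each member of a pairwise-commuting family of nonnegative matrices, then the family has a common eigenvector in $C$. This follows from the same inductive Perron--Frobenius argument proving \cite[Theorem 3.5]{Schneider}: pick one matrix, use the Krein--Rutman theorem on $C$ (which has nonempty relative interior in its span, and that span is preserved) to get an eigenvector of it in $C$, pass to the subcone cut out inside $C$ by the corresponding eigenspace, and repeat; commutativity ensures every matrix of the family preserves each successive subcone.

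For part (2), set $P:=p(A_1,\dots,A_m)$, which is well defined and nonnegative by hypothesis, and fix $\lambda\in\sigma_D(P)$. Then $C:=\ker(P-\lambda I)\cap\zR_+^n$ is a nonzero pointed polyhedral cone; each $A_j$ commutes with $P$, hence preserves $\ker(P-\lambda I)$, and being nonnegative preserves $\zR_+^n$, so it preserves $C$. The tool above yields $y\in C$, $y\neq0$, with $A_jy=\lambda_jy$ for suitable $\lambda_j\ge0$, whence $\lambda_j\in\sigma_D(A_j)$ for all $j$. Since the $A_j$ commute and $y$ is a common eigenvector, $p(A_1,\dots,A_m)y=p(\lambda_1,\dots,\lambda_m)y$, while $Py=\lambda y$ because $y\in C$; as $y\neq0$ this gives $\lambda=p(\lambda_1,\dots,\lambda_m)$, which is (2).

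For part (1), fix $i$ and $\lambda_i\in\sigma_D(A_i)$, and put $C:=\ker(A_i-\lambda_iI)\cap\zR_+^n$, again a nonzero pointed polyhedral cone. For $j\neq i$, $A_j$ commutes with $A_i$ and is nonnegative, so it preserves $C$; $A_i$ preserves $C$ trivially. Applying the tool to $\{A_1,\dots,A_m\}$ on $C$ produces $y\in C$, $y\neq0$, with $A_jy=\lambda_jy$ for some $\lambda_j\ge0$ and all $j$ (the value at $j=i$ being the prescribed $\lambda_i$, since $y\in C$). Thus $\lambda_j\in\sigma_D(A_j)$ for every $j$, and as before $Py=p(A_1,\dots,A_m)y=p(\lambda_1,\dots,\lambda_m)y$. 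Because $P\ge0$, $y\ge0$ and $y\neq0$, the scalar $p(\lambda_1,\dots,\lambda_m)$ is nonnegative and $y$ is a corresponding nonnegative eigenvector of $P$, so $p(\lambda_1,\dots,\lambda_m)\in\sigma_D(P)$, which is (1); here the hypothesis that $p(A_1,\dots,A_m)$ be nonnegative is exactly what guarantees that the produced eigenvalue is admissible.

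The only genuine work is the cone version of the common-eigenvector theorem: one must check that the eigencones $\ker(P-\lambda I)\cap\zR_+^n$ and $\ker(A_i-\lambda_iI)\cap\zR_+^n$ are bona fide pointed closed cones (pointedness is automatic inside the orthant) and that the inductive step never collapses the current subcone to $\{0\}$ --- which it cannot, since every matrix of the family is cone-preserving and hence, by Krein--Rutman, has a nonzero fixed ray in whatever subcone has been reached. I expect this bookkeeping, not the algebraic identity $p(A_1,\dots,A_m)y=p(\lambda_1,\dots,\lambda_m)y$, to be the main obstacle; if \cite[Theorem 3.5]{Schneider} is already formulated for families leaving a common cone invariant, parts (1) and (2) follow at once.
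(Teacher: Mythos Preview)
Your proposal is correct and follows exactly the route the paper indicates: the paper does not prove Theorem~\ref{dist_power} itself but cites it from \cite[Theorem 3.6]{Schneider}, remarking only that the proof rests on the common-nonnegative-eigenvector result \cite[Theorem 3.5]{Schneider}. Your argument is a faithful reconstruction of that proof, including the standard passage to invariant subcones needed to pin down the prescribed eigenvalue in part~(1) and the prescribed $\lambda$ in part~(2).
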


Below we prove results which give additional information to Theorems \ref{spower} and \ref{dist_power}.
They follow from results of \cite{Schneider}    
and from  descriptions of $r_{e_i}$ and $\rho_{e_i}$ via access relations (see e.g. \cite{Schneider}  and \cite[Corollary 2.9]{MP15}). Recall that by \cite[Corollaries 4.2 and 4.2A]{Schneider} for each $i$ the max eigenvalue $r_{e_i}(A)$ is the maximum cycle geometric mean of some spectral class (in max algebra) and that the distinguished eigenvalue $\rho_{e_i}(A)$ equals the Perron root of some premier spectral class (in nonnegative linear algebra).  For unexplained details on Frobenius normal form and access relations we refer the reader to \cite[Section 4]{Schneider} and \cite{MP15}.

\begin{thm}\label{powers}  Let $A_1, \ldots , A_m \in \zR _+ ^{n\times n}$ commute in pairs in max algebra and let  
	$p_{\otimes }(x_1, \ldots , x_m) $ be a max polynomial (of $m$ variables).
	{\color{black}Then for  each $i\in \{1, \ldots , n\}$  there exists $k\in \{1, \ldots , n\}$ such that the equality 
	
	\begin{equation}\label{pw1}
		r_{e_i}(p_{\otimes}(A_1,\dots,A_m))=p_{\otimes}(r_{e_k}(A_1),\dots,r_{e_k}(A_m))
	\end{equation}
	holds if one of the following conditions is satisfied:}
	
	(i) if at least one of the matrices $A_1, \ldots , A_m, p_{\otimes}(A_1,\dots,A_m)$ is irreducible. In this case we may take $k=i$;
	
	{\color{black}(ii) if all classes of $A_j$, for each $j=1,\ldots , m$, have distinct eigenvalues in max algebra (if cycle geometric means of all the classes are distinct). }%
\end{thm}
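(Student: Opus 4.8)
The plan is to reduce \eqref{pw1}, for a fixed $i$, to the construction of a single joint max-eigenvector of $A_1,\dots,A_m$ together with a well-chosen index in its support, and then to carry that construction out separately in the two cases. Write $B:=p_{\otimes}(A_1,\dots,A_m)$. Two preliminaries. First, $B$ commutes in max algebra with each $A_j$, and any joint max-eigenvector $v\neq0$ of $A_1,\dots,A_m$ (which exists by the joint-eigenvector theorem for commuting matrices recalled before Theorem~\ref{spower}, cf.\ \cite[Theorem~3.5]{Schneider}), say $A_j\otimes v=\lambda_j v$, is automatically a max-eigenvector of $B$ with $B\otimes v=p_{\otimes}(\lambda_1,\dots,\lambda_m)\,v$ (evaluate the finitely many monomials of $p_{\otimes}$ on $v$); moreover $A_1^{\top},\dots,A_m^{\top}$ commute in pairs, $B^{\top}=p_{\otimes}(A_1^{\top},\dots,A_m^{\top})$, and the analogous statements hold for them. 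Second, by Lemma~\ref{vladimir}, $r_{e_i}(A)$ is the largest geometric mean of a cycle of the weighted digraph of $A$ accessible from $i$, equivalently $r_{e_i}(A)=\max\{r_{\otimes}(A_{[\mu]})\colon\mu\text{ a class accessed from }i\}$, attained at a spectral class (cf.\ \cite[Cor.~4.2]{Schneider}, \cite[Cor.~2.9]{MP15}). A telescoping observation I will use repeatedly: if a matrix $M$ has a strictly positive max-eigenvector $u$ of its transpose, $M^{\top}\otimes u=\lambda u$, then $r_{e_i}(M)=r_{\otimes}(M)$ for all $i$ --- choosing for each $s$ an index $\sigma(s)$ with $M_{\sigma(s),s}u_{\sigma(s)}=\lambda u_s$ and iterating $\sigma$ from $i$ produces a forward walk in the digraph of $M$ that closes into a directed cycle along which the arc weights telescope to geometric mean $\lambda$, so $r_{e_i}(M)\ge\lambda$, while $u>0$ forces $\lambda=r_{\otimes}(M^{\top})=r_{\otimes}(M)$; note that the same argument on a positive \emph{right} eigenvector only yields cycles having access \emph{to} $i$, which is why the transpose is taken.

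Case (i). Suppose one of $A_1,\dots,A_m,B$, say $N$, is irreducible; then $N$ commutes in max algebra with each $A_j$ and with $B$. I would first show that commuting with the irreducible $N$ forces each of $A_1,\dots,A_m,B$ to share one strictly positive max-eigenvector of its transpose: $N$ has a strictly positive left Perron eigenvector $u$, and analysing $M\otimes N^{t}_{\otimes}=N^{t}_{\otimes}\otimes M$ for large $t$ --- using that a suitable rescaling of the powers $N^{t}_{\otimes}$ approximates a rank-one outer product of the Perron eigenvectors of $N$ --- gives $M^{\top}\otimes u=\lambda_M u$ for $M=A_1,\dots,A_m,B$ with a common $u>0$. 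By the telescoping observation, $r_{e_i}(A_j)=r_{\otimes}(A_j)=\lambda_{A_j}$ and $r_{e_i}(B)=r_{\otimes}(B)=\lambda_B$ for every $i$; and $B^{\top}\otimes u=p_{\otimes}(A_1^{\top},\dots,A_m^{\top})\otimes u=p_{\otimes}(\lambda_{A_1},\dots,\lambda_{A_m})\,u$ gives $\lambda_B=p_{\otimes}(\lambda_{A_1},\dots,\lambda_{A_m})$. Hence $r_{e_i}(B)=p_{\otimes}(r_{e_i}(A_1),\dots,r_{e_i}(A_m))$ for all $i$: \eqref{pw1} holds with $k=i$.

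Case (ii). Assume that for each $j$ the classes of $A_j$ have pairwise distinct cycle geometric means. Fix a spectral class $\nu$ of $B$ accessed from $i$ with $r_{e_i}(B)=r_{\otimes}(B_{[\nu]})$. The eigencone of $B$ for the eigenvalue $r_{e_i}(B)$ is a finitely generated max-cone preserved by the commuting maps $A_1,\dots,A_m$, and the plan is to produce inside it a joint max-eigenvector $v$ of $A_1,\dots,A_m$ attached to $\nu$; by the first preliminary, $A_j\otimes v=\lambda_j v$ with $p_{\otimes}(\lambda_1,\dots,\lambda_m)=r_{e_i}(B)$. It then remains to find an index $k$ with $v_k\neq0$ and $r_{e_k}(A_j)\ge\lambda_j$ for \emph{every} $j$ --- whereupon the automatic $r_{e_k}(A_j)\le\lambda_j$ (from $\lambda_j=r_v(A_j)=\max\{r_{e_\ell}(A_j)\colon v_\ell\neq0\}$, by \eqref{local_max}) turns this into $r_{e_k}(A_j)=\lambda_j$ and yields $r_{e_i}(B)=p_{\otimes}(r_{e_k}(A_1),\dots,r_{e_k}(A_m))$. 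This last step is where hypothesis (ii) is indispensable: $\lambda_j$ is the largest $A_j$-cycle geometric mean carried inside the support of $v$, and because the classes of $A_j$ have distinct cycle geometric means it is attained at a \emph{unique} class $\mu_j$ of $A_j$; the access analysis of the commuting family in \cite[Section~4]{Schneider} then lets one arrange $\mu_1,\dots,\mu_m$ into a common chain --- without ties there is no room for the ``top'' classes in the different digraphs to disagree --- so that some $k$ has access to $\mu_j$ in the digraph of $A_j$ for all $j$, and then $r_{e_k}(A_j)\ge r_{\otimes}\big((A_j)_{[\mu_j]}\big)=\lambda_j$.

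The routine parts are the telescoping observation and, once the asymptotics of $N^{t}_{\otimes}$ are invoked, case (i). The main obstacle I anticipate is the final step of case (ii): proving, via the Frobenius-normal-form and access analysis of \cite[Section~4]{Schneider}, that distinctness of the class cycle geometric means forces the classes $\mu_1,\dots,\mu_m$ attached to $\nu$ to be reachable from a single vertex $k$ --- equivalently, that the maxima $\lambda_j=\max\{r_{e_\ell}(A_j)\colon v_\ell\neq0\}$ can be realized simultaneously at one admissible index. Without (ii) these maxima can be forced onto mutually incompatible indices, and then only the weaker, index-dependent spectral mapping of Theorem~\ref{spower} is available.
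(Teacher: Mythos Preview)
Your overall architecture is right: produce a joint max-eigenvector of the commuting family carrying the eigenvalue $r_{e_i}(B)$, and then locate an index $k$ in its support at which all the $\lambda_j$ are realised as $r_{e_k}(A_j)$. The two cases differ only in how that localisation is carried out, and here your treatment and the paper's diverge.

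In case (i) you and the paper reach the same conclusion (each $A_j$ and $B$ has a unique max-eigenvalue, so \eqref{pw1} holds with $k=i$) but by different means. The paper simply invokes \cite[Lemma~4.6]{Schneider}: if one member of a max-commuting family is irreducible, every member has a single max-eigenvalue; then Theorem~\ref{spower} finishes. Your route through the asymptotics of $N^t_{\otimes}$ and the telescoping observation is more hands-on and essentially correct, but note that for irreducible imprimitive $N$ the rescaled powers are only eventually \emph{periodic}, not convergent to a single rank-one pattern, so you must pass to a subsequence (or to $N^d_{\otimes}$ with $d$ the cyclicity) before reading off the limit.

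In case (ii) the divergence matters more, and the step you flag as ``the main obstacle'' is exactly what the paper short-circuits. The structural input you are missing is \cite[Theorem~4.8(iii)]{Schneider}: under hypothesis~(ii) the \emph{spectral classes of $A_1,\dots,A_m$ coincide}. Together with \cite[Corollary~4.2(v)]{Schneider}, the support of the joint eigenvector $v$ (obtained from \cite[Theorem~3.5]{Schneider}) is then the initial segment generated by a \emph{single} class $\mu$ that is simultaneously spectral for every $A_j$, and any $k\in\mu$ gives $A_j\otimes v=r_{e_k}(A_j)\,v$ for all $j$ at once (via \cite[Corollary~4.2(ii)]{Schneider} and Lemma~\ref{vladimir}). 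There is nothing to ``arrange into a common chain'': your $\mu_1,\dots,\mu_m$ are already equal. Your bare-hands access argument could likely be completed, but it would amount to reproving part of \cite[Theorem~4.8]{Schneider}.

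One minor caveat on conventions: in this paper (Lemma~\ref{vladimir}, Theorem~\ref{access_dist}) $r_{e_k}(A)$ is the maximum cycle mean over classes having access \emph{to} $k$, not classes accessible \emph{from} $k$; several of your statements (in the telescoping observation and in case~(ii)) use the opposite direction, so check that your arrows point the right way before filling in details.
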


\begin{proof} %
	Case (i): Since  $A_1, \ldots , A_m, p_{\otimes}(A_1,\dots,A_m) $ commute in pairs in max algebra, 
	then by 
	\cite[Lemma 4.6]{Schneider} 
	each of these matrices has a unique eigenvalue in max algebra and so by  Theorem \ref{spower} we have 
	\begin{equation}
		r_{\otimes}(p_{\otimes}(A_1,\dots,A_m))=p_{\otimes}(r_{\otimes}(A_1),\dots,r_{\otimes}(A_m)),
	\end{equation}
	which is in this case equivalent to (\ref{pw1}) (see also \cite[Corollary 4.6 and Remark 4.7]{Schneider}).

	Case (ii): Under the assumptions in this case the spectral classes of matrices $ A_1, \ldots , A_m $  coincide by  \cite[Theorem 4.8(iii)]{Schneider}.
	 Now we apply the arguments from the proof of \cite[Theorem 4.8(iv)]{Schneider}. Let $i\in \{1, \ldots , n\}$.  Since the matrices $A_1, \ldots , A_m, p_{\otimes}(A_1,\dots,A_m)$
	 commute in pairs in max algebra there exists by \cite[Theorem 3.5]{Schneider} a max eigenvector $v$ satisfying 
	 $$p_{\otimes}(A_1,\dots,A_m)\otimes v=r_{e_i}(p_{\otimes}(A_1,\dots,A_m))v,$$
	  which is also a max eigenvector for all $ A_1, \ldots , A_m $. 
	 By \cite[Corollary 4.2(v)]{Schneider} there exists a  common spectral class $\mu$ of  matrices $ A_1, \ldots , A_m $ such that  the support of $v$ is equal to the initial segment generated by $\mu$. By \cite[Corollary 4.2(ii)]{Schneider} and  Lemma \ref{vladimir}, {\color{black} for each $k \in \mu$ we have
	 $A_j \otimes v =r_{e_k}(A_j)v$ for all $j=1, \ldots , m$. } 
	 {\color{black} It follows that 
	 $$r_{e_i}(p_{\otimes}(A_1,\dots,A_m))v=p_{\otimes}(A_1,\dots,A_m)\otimes v=p_{\otimes}(r_{e_k}(A_1),\dots,r_{e_k}(A_m))v $$
	 and thus (\ref{pw1}) holds.}
\end{proof}

The following results is an analogue of \cite[Corollary 2.9]{MP15} for distinguished eigenvalues. It follows from (\ref{dist_eig}) and \cite[Corollary 4.2A]{Schneider}. 

\begin{thm} Let $A \in \zR _+ ^{n\times n}$ and $i \in \{1, \ldots , n\}$. Then $\rho _{e_i} (A)$ equals the maximum of  Perron roots of classes $\mu$ of $A$ such that 
$\mu$ has access to $i$.
\label{access_dist}
\end{thm}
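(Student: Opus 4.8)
The plan is to read off $\rho_{e_i}(A)$ directly from the defining limit superior in (\ref{local_def}) after restricting $A$ to the part of its Frobenius structure lying upstream of $i$. Fix $i$ and put $T_i=\{\,l:\ l\text{ has access to }i\,\}$; then $i\in T_i$, so the maximum in the statement is taken over a non-empty family (it contains the class of $i$). Two facts about $T_i$ drive everything. First, $T_i$ is a union of classes of $A$: if $l\in T_i$ and $l'$ lies in the same class as $l$, then $l'$ reaches $l$ and $l$ reaches $i$, so $l'\in T_i$. Second, $T_i$ is closed under predecessors in the digraph of $A$: if $a_{lm}\neq 0$ and $m\in T_i$ then $l\in T_i$. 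The second fact forces every walk ending at $i$ to stay inside $T_i$; hence, writing $B:=A[T_i]$ for the principal submatrix on $T_i$, the $i$-th column of $A^k$ is supported on $T_i$ and agrees there with the $i$-th column of $B^k$. Since all entries are nonnegative, $\|A^k e_i\|=\|B^k e_i\|$ for every $k$ (on the right $e_i$ denotes the corresponding basis vector for $B$), so $\rho_{e_i}(A)=\rho_{e_i}(B)$. By the first fact the classes of $B$ are precisely the classes $\mu$ of $A$ with $\mu\subseteq T_i$ — equivalently, since a class is strongly connected, precisely the classes $\mu$ having access to $i$ — and $B[\mu]=A[\mu]$ for each. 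As $\rho(B)=\max\{\rho(A[\mu]):\ \mu\text{ a class of }A,\ \mu\subseteq T_i\}$, the theorem reduces to the single identity $\rho_{e_i}(B)=\rho(B)$.

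The inequality $\rho_{e_i}(B)\le\rho(B)$ is immediate from $\|B^k e_i\|\le\|B^k\|$ and the Gelfand formula $\lim_k\|B^k\|^{1/k}=\rho(B)$ for the ordinary spectral radius. For the reverse inequality (the substantive point, and trivial if $\rho(B)=0$) pick a class $\mu^{*}$ of $B$ with $\rho(B[\mu^{*}])=\rho(B)>0$ and a vertex $v\in\mu^{*}$; because $\mu^{*}\subseteq T_i$ there is a walk from $v$ to $i$ in $B$ of some length $p$ and weight $w>0$. Prepending to it every length-$m$ closed walk at $v$ that stays inside $\mu^{*}$ gives $(B^{m+p})_{vi}\ge (B[\mu^{*}]^{m})_{vv}\,w$, hence $\|B^{m+p}e_i\|\ge (B[\mu^{*}]^{m})_{vv}\,w$. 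Taking $m$ through the multiples of the cyclicity $h$ of the irreducible matrix $B[\mu^{*}]$, the entry $(B[\mu^{*}]^{m})_{vv}$ is eventually positive and $(B[\mu^{*}]^{m})_{vv}^{1/m}\to\rho(B[\mu^{*}])=\rho(B)$, by applying Perron--Frobenius to the primitive matrix that $B[\mu^{*}]^{h}$ induces on the cyclic class of $v$. Letting $m\to\infty$ along these multiples yields $\rho_{e_i}(B)=\limsup_k\|B^k e_i\|^{1/k}\ge\rho(B)$, which closes the argument.

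I expect the genuinely delicate point to be this last limit: arranging that $(B[\mu^{*}]^{m})_{vv}$ is positive along the right residue class modulo $h$ and has exponential rate exactly $\rho(B)$, together with the (harmless but needed) check that passing from $A$ to $B=A[T_i]$ does not alter the local spectral radius at $e_i$. A shorter, more structural alternative — the route indicated right after the statement — is to invoke (\ref{dist_eig}) together with the description of $\sigma_D(A)$ via the access relation and premier spectral classes in \cite[Corollary 4.2A]{Schneider}, identifying $\rho_{e_i}(A)$ with the Perron root of the appropriate spectral class inside the initial segment determined by $i$; one then still has to recognise that this Perron root equals $\max\{\rho(A[\mu]):\mu\text{ has access to }i\}$, so both approaches ultimately rest on the same combinatorial observation.
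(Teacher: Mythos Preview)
Your argument is correct and genuinely different from the paper's. The paper does not compute $\rho_{e_i}(A)$ from the defining $\limsup$ at all: it invokes (\ref{dist_eig}) to place $\rho_{e_i}(A)$ in $\sigma_D(A)$, then quotes \cite[Corollary~4.2A]{Schneider} to identify it as the Perron root of a premier spectral class having access to $i$, with the monotonicity observation $a_{ji}>0\Rightarrow\rho_{e_i}(A)\ge\rho_{e_j}(A)$ supplying the remaining inequality. Your route is more elementary and self-contained: the restriction to the upstream set $T_i$ reduces the problem to the identity $\rho_{e_i}(B)=\rho(B)$, which you then read off directly from Perron--Frobenius asymptotics for the diagonal entries of powers of an irreducible block. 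This avoids the structural machinery of premier spectral classes entirely, at the cost of a short explicit computation; the paper's version is terser but imports that machinery from \cite{Schneider}. You correctly identify in your final paragraph that the two routes rest on the same combinatorial reduction to the classes lying above $i$, and your anticipated ``delicate point'' about $(B[\mu^*]^m)_{vv}^{1/m}\to\rho(B[\mu^*])$ along the residue class modulo the period is standard Perron--Frobenius and poses no difficulty.
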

\begin{proof} Assume without loss of generality that $A$ is in the (lower block triangular) Frobenius normal form. Observe that $\rho _{e_i} (A) \ge \rho _{e_j} (A)$ if $a_{ji} >0$. By (\ref{dist_eig}) and \cite[Corollary 4.2A]{Schneider} there exists a premier spectral class $\mu$ with Perron root  $\rho _{e_i} (A)$ such that $\mu$ has access to $i$. 
This completes the proof.
\end{proof}

By applying  \cite[Theorem 3.5, Theorem 4.8A, Corollary 4.2A, Lemma 4.6A]{Schneider}) and Theorem \ref{access_dist} the following theorem is proved in a similar way as Theorem \ref{powers}.

\begin{thm}\label{powers_nonneg}  Let $A_1, \ldots , A_m \in \zR _+ ^{n\times n}$ commute in pairs and let  
	$p(x_1, \ldots , x_m) $ be a real polynomial (of $m$ variables)  such that 
	$p (A _1, \ldots , A_m) $ is nonnegative. %
	
	{\color{black}Then for  each $i\in \{1, \ldots , n\}$  there exists $k\in \{1, \ldots , n\}$ such that the equality }
	\begin{equation}\label{pw1_nonneg}
		\rho_{e_i}(p(A_1,\dots,A_m))=p(\rho_{e_k}(A_1),\dots, \rho_{e_k}(A_m))
	\end{equation}
	holds if one of the following conditions is satisfied:
	
	(i) if at least one of the matrices $A_1, \ldots , A_m, p(A_1,\dots, A_m)$ is irreducible. In this case we may take $k=i$;
	
	{\color{black} (ii) if all classes of $A_j$, for each $j=1,\ldots , m$, have distinct Perron roots. }%
\end{thm}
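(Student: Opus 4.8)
The plan is to follow, almost verbatim, the proof of Theorem~\ref{powers}, translating every max-algebra ingredient into its Perron--Frobenius analogue: the max product $\otimes$ becomes the ordinary matrix product, $r_{e_i}$ becomes $\rho_{e_i}$, the joint max-eigenvector supplied by \cite[Theorem~3.5]{Schneider} becomes the joint nonnegative (distinguished) eigenvector from the same source, the structural lemmas \cite[Lemma~4.6, Corollary~4.2, Theorem~4.8]{Schneider} are replaced by their nonnegative-linear-algebra counterparts \cite[Lemma~4.6A, Corollary~4.2A, Theorem~4.8A]{Schneider}, Theorem~\ref{spower} is replaced by Theorem~\ref{dist_power}, and the access-relation description of $r_{e_i}$ via Lemma~\ref{vladimir} is replaced by the description of $\rho_{e_i}$ in Theorem~\ref{access_dist}. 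Throughout I would use that $p(A_1,\dots,A_m)$ is assumed nonnegative, so that $\rho_{e_i}(p(A_1,\dots,A_m))$ genuinely is a distinguished eigenvalue in the sense of (\ref{dist_eig}).

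For case (i), I would first note that since $A_1,\dots,A_m$ and $p(A_1,\dots,A_m)$ commute in pairs and one of them is irreducible, \cite[Lemma~4.6A]{Schneider} shows that each of these matrices has only one distinguished eigenvalue; that eigenvalue must then equal $\rho_{e_i}$ for every index $i$ and equal the spectral radius. Applying the distinguished polynomial spectral mapping theorem (Theorem~\ref{dist_power}) to these unique distinguished eigenvalues gives $\rho_{e_i}(p(A_1,\dots,A_m))=p(\rho_{e_i}(A_1),\dots,\rho_{e_i}(A_m))$, which is (\ref{pw1_nonneg}) with $k=i$.

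For case (ii), I would invoke \cite[Theorem~4.8A(iii)]{Schneider} to deduce that the premier spectral classes of $A_1,\dots,A_m$ all coincide, and then reproduce the argument in the proof of \cite[Theorem~4.8A(iv)]{Schneider}. Fixing $i$, the commutativity of $A_1,\dots,A_m,p(A_1,\dots,A_m)$ yields, by \cite[Theorem~3.5]{Schneider}, a single nonnegative vector $v\ne 0$ that is a distinguished eigenvector of $p(A_1,\dots,A_m)$ for $\rho_{e_i}(p(A_1,\dots,A_m))$ and simultaneously a distinguished eigenvector of each $A_j$. By \cite[Corollary~4.2A(v)]{Schneider} the support of $v$ is the initial segment generated by some common premier spectral class $\mu$ of $A_1,\dots,A_m$, and by \cite[Corollary~4.2A(ii)]{Schneider} together with Theorem~\ref{access_dist} one has $A_j v=\rho_{e_k}(A_j)v$ for every $k\in\mu$ and every $j$. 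Since $v$ is a joint eigenvector, each monomial in the $A_j$ acts on $v$ as the corresponding monomial in the scalars $\rho_{e_k}(A_j)$, so by linearity $p(A_1,\dots,A_m)v=p(\rho_{e_k}(A_1),\dots,\rho_{e_k}(A_m))v$; comparing with $p(A_1,\dots,A_m)v=\rho_{e_i}(p(A_1,\dots,A_m))v$ gives (\ref{pw1_nonneg}) for this $k$.

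The routine part — evaluating the polynomial $p$ on a joint eigenvector — is immediate once the eigenvector is in hand, so the real work, exactly as in Theorem~\ref{powers}, is the structural bookkeeping: one must check that \cite[Theorem~3.5]{Schneider} produces a joint distinguished eigenvector whose support is precisely an initial segment generated by a \emph{common} premier spectral class (which is where the hypothesis that distinct classes have distinct Perron roots enters, via \cite[Theorem~4.8A]{Schneider}), and that on such a class the Perron roots are correctly matched with the quantities $\rho_{e_k}(A_j)$ through \cite[Corollary~4.2A]{Schneider} and Theorem~\ref{access_dist}. Verifying that these ``$A$-versions'' of Schneider's results hold under ordinary commutativity, rather than max commutativity, is the point I would be most careful about.
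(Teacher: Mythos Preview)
Your proposal is correct and follows essentially the same route as the paper, which simply states that the result is proved analogously to Theorem~\ref{powers} by replacing the max-algebra tools \cite[Lemma~4.6, Corollary~4.2, Theorem~4.8]{Schneider} and Lemma~\ref{vladimir} with their nonnegative-linear-algebra counterparts \cite[Lemma~4.6A, Corollary~4.2A, Theorem~4.8A, Theorem~3.5]{Schneider} and Theorem~\ref{access_dist}. Your write-up supplies exactly this translation, including the correct shift from spectral classes to premier spectral classes and the use of Theorem~\ref{dist_power} in place of Theorem~\ref{spower}.
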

The following useful result gives more information in the special case of powers of a nonnegative matrix. It follows from (\ref{dist_eig}), \cite[Corollary 4.2A]{Schneider}, Theorem \ref{access_dist} and \cite[Theorem 5.4 and Lemma 5.3]{BSST13}. Observe that in \cite{BSST13} the premier spectral classes (in nonnegative linear algebra) were simply called spectral classes (in nonnegative linear algebra).
\begin{thm}  Let $A \in \zR _+ ^{n\times n}$, $i \in \{1, \ldots , n\}$ and $m\in \mathbb{N}$. Then
\begin{equation}
\rho _{e_i} (A^m)= \rho _{e_i} (A)^m .
\label{powers_rho_i}
\end{equation}
\end{thm}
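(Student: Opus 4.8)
The plan is to prove the two inequalities $\rho _{e_i}(A^m)\le\rho _{e_i}(A)^m$ and $\rho _{e_i}(A^m)\ge\rho _{e_i}(A)^m$ separately. The first is a soft consequence of the definition (\ref{local_def}) of the local spectral radius; the second is where the work lies, and I would obtain it from Theorem \ref{access_dist} together with the nonnegativity of $A$.

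For the upper bound, put $b_N:=\|A^N e_i\|^{1/N}$. Since $(A^m)^k=A^{mk}$, the definition (\ref{local_def}) and the continuity and monotonicity of $z\mapsto z^{m}$ on $[0,\infty)$ give
\[
\rho _{e_i}(A^m)=\limsup_{k\to\infty}\bigl\|A^{mk}e_i\bigr\|^{1/k}=\Bigl(\limsup_{k\to\infty} b_{mk}\Bigr)^{m}.
\]
Because $(b_{mk})_{k}$ is a subsequence of $(b_N)_{N}$ we have $\limsup_{k}b_{mk}\le\limsup_{N}b_N=\rho _{e_i}(A)$, and the claimed upper bound follows.

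For the lower bound I would first use Theorem \ref{access_dist} to choose a class $\mu$ of $A$ whose Perron root $\rho(A[\mu])$ equals $\rho _{e_i}(A)=:r$ and which has access to $i$; here $A[\mu]$ denotes the (irreducible) principal submatrix of $A$ on the index set $\mu$, and we may assume $r>0$ since otherwise the inequality is trivial. Having access to $i$ means precisely that the orbit of $e_i$ eventually feeds into $\mu$: there are $j_0\ge 0$ and $v\in\mu$ with $[A^{j_0}]_{vi}>0$, so the vector $y$ obtained from $A^{j_0}e_i$ by keeping the coordinates in $\mu$ and zeroing the rest is nonzero, nonnegative, supported in $\mu$, and satisfies $A^{j_0}e_i\ge y$ entrywise. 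Since $\mu$ is a class of $A$, any walk in the digraph of $A$ joining two vertices of $\mu$ stays inside $\mu$; hence for each $t\ge 0$ the restriction of $A^{t}y$ to $\mu$ equals $(A[\mu])^{t}y$ (regarding $y$ as a vector indexed by $\mu$). Using $A^{mk}e_i=A^{mk-j_0}(A^{j_0}e_i)\ge A^{mk-j_0}y$ we therefore get, for all $k$ with $mk\ge j_0$,
\[
\bigl\|A^{mk}e_i\bigr\|\ \ge\ \bigl\|(A[\mu])^{\,mk-j_0}\,y\bigr\|.
\]
Now I would invoke only the elementary existence part of Perron--Frobenius: the irreducible nonnegative matrix $A[\mu]$ has a strictly positive left eigenvector $w^{\top}$ with $w^{\top}A[\mu]=r\,w^{\top}$. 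Then $w^{\top}(A[\mu])^{t}y=r^{t}\,w^{\top}y$ with $w^{\top}y>0$, so $\|(A[\mu])^{t}y\|\ge c\,r^{t}$ for all $t\ge 0$, where $c:=w^{\top}y/\sum_{l}w_l>0$. Substituting $t=mk-j_0$ and taking $mk$-th roots yields $\liminf_{k\to\infty}b_{mk}\ge r$, a fortiori $\limsup_{k}b_{mk}\ge r$, and hence, by the identity of the previous paragraph, $\rho _{e_i}(A^m)=\bigl(\limsup_k b_{mk}\bigr)^{m}\ge r^{m}=\rho _{e_i}(A)^{m}$.

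The genuinely delicate point is this lower bound: a priori the arithmetic progression of exponents $(mk)_k$ need not realize the upper limit that defines $\rho _{e_i}(A)$, and it is only nonnegativity that rescues the estimate --- once the orbit of $e_i$ has reached the dominant class $\mu$ (which Theorem \ref{access_dist} guarantees), the norms $\|A^{N}e_i\|$ grow at rate at least $r$ along \emph{every} arithmetic progression of indices $N$. The same mechanism in fact shows that $\lim_{N\to\infty}\|A^{N}e_i\|^{1/N}$ exists, not merely $\limsup$, after which the theorem is immediate since a subsequence of a convergent sequence has the same limit; alternatively one may argue through the behaviour of the premier spectral classes of $A$ under $m$-th powers, using \cite[Theorem 5.4 and Lemma 5.3]{BSST13} and Theorem \ref{access_dist}.
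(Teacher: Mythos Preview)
Your argument is correct. The upper bound is handled exactly as in the paper. For the lower bound, however, you take a genuinely different and more elementary route than the paper does.

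The paper's proof of $\rho_{e_i}(A^m)\ge\rho_{e_i}(A)^m$ is structural: having located a premier spectral class $\mu$ of $A$ with Perron root $\rho_{e_i}(A)$ and access to $i$, it invokes \cite[Theorem 5.4 and Lemma 5.3]{BSST13} to produce a premier spectral class $\nu\subset\mu$ of $A^m$ with Perron root $\rho_{e_i}(A)^m$ that still has access to $i$ in $A^m$, and then applies Theorem~\ref{access_dist} to $A^m$. In other words, the paper tracks how the class decomposition behaves under taking $m$th powers. You bypass this completely: once Theorem~\ref{access_dist} supplies the class $\mu$ of $A$, you argue directly at the level of growth rates, using the left Perron vector of the irreducible block $A[\mu]$ to force $\|A^{N}e_i\|\ge c\,r^{N-j_0}$ for all large $N$, in particular along the arithmetic progression $N=mk$. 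Your observation that walks in the digraph of $A$ between two vertices of a class cannot leave that class (so the restriction of $A^{t}y$ to $\mu$ is exactly $(A[\mu])^{t}y$) is correct and is the key combinatorial input.

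What each approach buys: your argument is self-contained---it needs only Theorem~\ref{access_dist} and the existence of a positive Perron eigenvector for an irreducible nonnegative matrix, not the finer analysis of \cite{BSST13}---and it yields the bonus you note, that the $\limsup$ in (\ref{local_def}) is actually a limit for $x=e_i$. The paper's route, on the other hand, situates the result within the general theory of how (premier) spectral classes split under powers, which is of independent interest and is the ``alternative'' you mention in your closing sentence.
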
 
\begin{proof}
The inequality
	\begin{equation}
		\label{good2}
		\rho _{x }(A^m)\le \rho _{x}(A)^m 
	\end{equation}
	for $x \in \zR _+ ^{n}$ is well known (see e.g. the proof of \cite[Proposition 2.1]{MN02}) and easy to establish. Indeed, by (\ref{local_def}) we have 
	$$\rho_{x} (A^m)= \limsup _{k \to \infty }\|A^{mk}   x\|^{\frac{1}{k}}=(\limsup _{k \to \infty }\|A^{mk}   x\|^{\frac{1}{mk}})^m \le \rho _{x}(A)^m . $$
	To prove (\ref{powers_rho_i}) we need to show that 
	\begin{equation}
\rho _{e_i} (A^m)\ge \rho _{e_i} (A)^m .
\label{powers_rho_i2}
\end{equation}
	We may assume that $ \rho _{e_i} (A) >0$. By (\ref{dist_eig}) and \cite[Corollary 4.2A]{Schneider} there exists a premier spectral class $\mu$ of $A$ with Perron root  $\rho _{e_i} (A)$ such that $\mu$ has access to $i$ in $A$. By \cite[Theorem 5.4 and Lemma 5.3]{BSST13}, $\mu$ is an ancestor of some premier spectral class $\nu$ of $A^m$ with  Perron root  $\rho _{e_i} (A)^m$, such that $\nu \subset \mu$ and such that $\nu$ has access to $i$ in $A^m$. By Theorem \ref{access_dist} inequality (\ref{powers_rho_i2}) holds, which completes the proof.
\end{proof}

In Theorem 
\ref{gen_matr_sp_map}
we prove a version of the spectral mapping theorem for power series $f$ for the distinguished part of the spectrum by showing that under suitable conditions 
			$\sigma _D(f(A))=f (\sigma _D(A))$ holds.

\bigskip

In the literature, inequalities on the classical spectral radius of matrices constantly attract substantial attention. 
For example, it is well known that for non-negative $n \times n$ matrices $A$ and $B$, the spectral radius $\rho(A \circ B)$ of the Hadamard (Schur) product satisfies
\begin{equation}
	\label{Els_ineq}
	\rho(A \circ B) \leq \rho(A) \rho (B) 
\end{equation}
and 
\begin{equation}
	\label{Aud_ineq}
	\rho(A \circ B) \leq \rho(AB),
\end{equation}
where $AB$ denotes the usual product of matrices $A$ and $B$ (see e.g. \cite{Audenaert,Elsner,HoJo1}). Relatively recently, several closely related inequalities for the spectral radius have been established (see e.g. \cite{CZ,DP16,HZ10,Huang,P12,P17,P18a,P18b,Sch10a,Sch10b,Z}). 
It is also known that the analogues of the above and several related inequalities hold also in max algebra (see e.g. \cite{MP18,P12,RLP19}). 

In a very special case of results of Section 3 of the current article we establish that the analogues of (\ref{Els_ineq}) for all  (geometric) eigenvalues in max-algebra and all classical distinguished eigenvalues of nonnegative matrices are valid. On the contrary, we show that the analogues of (\ref{Aud_ineq})  for both types of eigenvalues are not correct in general, but we do establish some new closely related inequalities.

\section{Asymptotic relations between eigenvalues in max algebra and distinguished classical eigenvalues }

Let $A \in \zR _+ ^{n\times n}$ and 
let $\rho (A)$ denote the usual spectral radius of $A$. The Gelfand formula states that 
$$\rho (A)= \lim _{k \to \infty }\|A^k \|^{1/k}.$$
It is also well known (see e.g.  \cite{Bapat,Elsner,P08,P11}) that 
\begin{equation}
	\label{good_n_ineq}
	r_{\otimes}(A) \le \rho (A) \le n r_{\otimes} (A).
\end{equation}

Recall that for $t >0$,  $A^{(t)}=[a_{ij} ^t]$ denotes the (entrywise) Hadamard (Schur) power.
From (\ref{good_n_ineq}) and the fact that $r_{\otimes}(A^{(t)}) =r_{\otimes} (A)^t$ for $t>0$ 
the following known equality follows  (see e.g. \cite{Bapat,Elsner,P08,P11})
\begin{equation}
	\label{Hadam_t}
	r_{\otimes}(A) =\lim _{t\to \infty } \rho (A^{(t)}) ^{1/t}. 
\end{equation}

Algebraic eigenvalues of $A$ in max algebra are the tropical roots of its max algebraic characteristic polynomial (see e.g. \cite{AGB01,Butkovic,RLP19} for detailed definitions). There are exactly $n$ algebraic max eigenvalues (counting suitable multiplicities).  It is known that in max algebra all (geometric) eigenvalues are also algebraic eigenvalues (but not vice versa in general). 
The following result that improves (\ref{Hadam_t}) is a restatement of a well known max-plus algebra result from \cite{AGB01}. 




\begin{thm}
	Let $r_1 \le \cdots \le r_n$ be algebraic eigenvalues of  $A\in \zR_ +^{n\times n}$ in max algebra.
	Then for each $i=1, ..., n $ we have
	\begin{equation}
		\label{AGB}
		r_i = \lim _{t\to \infty} |\lambda _i (t)|^{1/t},
	\end{equation}
	where $\lambda _i (t)$ are the (classical linear algebra) eigenvalues  of $A^{(t)}$ with  $|\lambda _1 (t)|\le \cdots \le |\lambda _n (t)| $.
\end{thm}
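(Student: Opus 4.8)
The plan is to reduce the statement to a known result on the asymptotics of roots of families of polynomials whose coefficients are given as maxima of monomials in a parameter. Recall that the algebraic max eigenvalues $r_1\le\cdots\le r_n$ of $A$ are by definition the tropical roots (counted with multiplicity) of the max-algebraic characteristic polynomial $\chi_{\otimes}(A)$; if we write $\chi(A^{(t)})(\lambda)=\lambda^n + \sum_{k=0}^{n-1} c_k(t)\,\lambda^{k}$ for the ordinary characteristic polynomial of $A^{(t)}$, then each coefficient $c_k(t)$ is, up to sign, a sum of products of the entries $a_{ij}^{t}$ over certain index sets coming from the Leibniz expansion of the determinant. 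Hence $|c_k(t)|$ is bounded above by a sum of at most (a fixed number of) terms of the form $e^{t\,\gamma}$ and bounded below, for $t$ large, by the single largest such term; consequently $\lim_{t\to\infty}|c_k(t)|^{1/t}$ exists and equals $\exp$ of the corresponding tropical coefficient $\hat c_k$ of $\chi_{\otimes}(A)$ (in multiplicative max-algebra notation, $\lim_{t\to\infty}|c_k(t)|^{1/t}$ is exactly the $k$th coefficient of the max characteristic polynomial). This is precisely the setup of the Akian–Bapat–Gaubert result in \cite{AGB01}.

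The key steps, in order, are: (1) expand $\det(\lambda I - A^{(t)})$ via the Leibniz formula and observe that each coefficient $c_k(t)$ is a finite signed sum of exponentials $\sum_\sigma \pm \exp\!\big(t\,w_\sigma\big)$, where the $w_\sigma$ are the (finitely many) log-weights of the relevant permutation contributions; (2) deduce $\lim_{t\to\infty}|c_k(t)|^{1/t}$ exists and identifies with the $k$th coefficient of the max characteristic polynomial $\chi_{\otimes}(A)$, i.e. with the data whose tropical roots are $r_1\le\cdots\le r_n$; (3) invoke the continuity of the (ordered moduli of) roots of a monic polynomial as a function of its coefficients, together with the scaling $\lambda\mapsto \lambda^{1/t}$, to transfer the coefficient asymptotics to the root asymptotics; (4) match the ordering $|\lambda_1(t)|\le\cdots\le|\lambda_n(t)|$ with $r_1\le\cdots\le r_n$, which is forced because the tropical roots are the break-points (corner loci) of the Newton polygon / max characteristic polynomial, and the modulus asymptotics of the classical roots recover exactly this Newton polygon — this is the content of \cite{AGB01}.

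The main obstacle is step (3)–(4): passing from ``the coefficients of $\chi(A^{(t)})$ have exponential growth rates equal to the max-characteristic coefficients'' to ``the individual root moduli, suitably ordered, have growth rates equal to the tropical roots.'' This is not immediate from naive continuity of roots, because a coefficient of size $e^{t\gamma}$ can arise with cancellation, and one must control that the dominant balance in the polynomial is governed by the Newton polygon of the exponents. The clean way to handle this is to cite the relevant theorem of \cite{AGB01} (respectively its max-plus formulation), which establishes exactly that the logarithmic limits of the root moduli of a family of polynomials with exponentially parametrized coefficients are the tropical roots of the associated tropical polynomial; the present statement is then just the specialization of that theorem to the family $t\mapsto \chi(A^{(t)})$, for which the exponent data is the max-algebraic characteristic polynomial of $A$. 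I would therefore present the proof as: set up the Leibniz expansion, identify the exponent data with $\chi_{\otimes}(A)$, and then quote \cite{AGB01} to conclude \eqref{AGB}, remarking that \eqref{Hadam_t} is recovered as the case $i=n$ since $r_n = r_{\otimes}(A)$ and $|\lambda_n(t)| = \rho(A^{(t)})$.
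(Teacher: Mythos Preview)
The paper does not actually prove this theorem: it is stated there without proof, explicitly as ``a restatement of a well known max-plus algebra result from \cite{AGB01}.'' Your proposal ultimately does the same thing --- set up the characteristic polynomial of $A^{(t)}$, identify the exponent data with the max-characteristic polynomial of $A$, and then invoke \cite{AGB01} for the root asymptotics --- so the two agree in substance.

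One caution about your exposition: your step (2), asserting that $\lim_{t\to\infty}|c_k(t)|^{1/t}$ equals the $k$th coefficient of $\chi_{\otimes}(A)$, is stated too strongly. Because $c_k(t)$ is a \emph{signed} sum of exponentials, ties among the dominant terms can produce exact cancellation, so this coefficient-by-coefficient limit need not exist or need not equal the tropical coefficient. You yourself flag this as ``the main obstacle'' and correctly route around it by citing \cite{AGB01} for the root asymptotics directly (via the Newton polygon), which is the right move; just be sure not to assert (2) as an intermediate conclusion in its own right. With that adjustment, your write-up is a reasonable expansion of what the paper leaves as a bare citation.
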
 


%

The following useful result for Hadamard powers 
is known (see e.g. \cite{P08})  and easily verified by definitions.

\begin{lem}\label{Lem1} Let $A_1, \cdots, A_m, A \in   \zR_+^{n\times n}$, $ x\in  \zR_+^{n}$, $t>0$ and $k \in \zN$. \\
	Then $\|A^{(t)}\|= \|A\|^t$,  $\|x^{(t)}\|= \|x\|^t$,
	\begin{equation}
		\label{t_prod}
		A_1 ^{(t)} \otimes \cdots \otimes A_m ^{(t)}= (A_1 \otimes  \cdots \otimes A_m)^{(t)}.
	\end{equation}
	and so
	\[(A ^{(t)})^k _{\otimes} = (A^k _{\otimes})^{(t)}.\] 
	
\end{lem}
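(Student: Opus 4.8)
The plan is to prove Lemma \ref{Lem1} by unwinding the definitions of the Hadamard power and the max-algebra product, since every claimed identity is essentially a statement that the single-variable map $t \mapsto s^t$ on $\zR_+$ is a semiring homomorphism from $(\zR_+, \oplus, \cdot)$ to itself (it is strictly increasing, hence commutes with $\max$, and it turns products into products). First I would dispose of the two norm identities: since $t\mapsto s^t$ is monotone increasing on $[0,\infty)$ for fixed $t>0$, we have $\|A^{(t)}\| = \max_{i,j} a_{ij}^t = \bigl(\max_{i,j} a_{ij}\bigr)^t = \|A\|^t$, and the same computation with a single index gives $\|x^{(t)}\| = \|x\|^t$.

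Next I would establish \eqref{t_prod}, which is the heart of the lemma. It suffices to prove it for $m=2$ and then iterate. For $m=2$, compute the $(i,j)$ entry of $A_1^{(t)} \otimes A_2^{(t)}$ directly:
\begin{equation*}
\bigl(A_1^{(t)} \otimes A_2^{(t)}\bigr)_{ij} = \max_{l=1,\dots,n} (a_{il})^t (b_{lj})^t = \max_{l=1,\dots,n} (a_{il}\, b_{lj})^t = \Bigl(\max_{l=1,\dots,n} a_{il}\, b_{lj}\Bigr)^t = \bigl((A_1 \otimes A_2)^{(t)}\bigr)_{ij},
\end{equation*}
where in the third equality I again use that $s\mapsto s^t$ is increasing on $[0,\infty)$, so it commutes with the finite maximum. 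The general case $A_1^{(t)} \otimes \cdots \otimes A_m^{(t)} = (A_1 \otimes \cdots \otimes A_m)^{(t)}$ then follows by an easy induction on $m$, applying the $m=2$ identity at each step (using associativity of $\otimes$, which holds just as for ordinary matrix products).

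Finally, the claimed identity $(A^{(t)})^k_{\otimes} = (A^k_{\otimes})^{(t)}$ is the special case of \eqref{t_prod} in which $A_1 = \cdots = A_m = A$ and $m=k$, since $(A^{(t)})^k_{\otimes}$ is by definition the $k$-fold max-algebra product of $A^{(t)}$ with itself. I do not anticipate any genuine obstacle here: the only point requiring care is the elementary but essential observation that $s\mapsto s^t$ is order-preserving on $[0,\infty)$ (so that it can be pulled through $\max$), and that it is multiplicative; everything else is bookkeeping with indices and an induction. One could alternatively phrase the entire argument by noting that $t\mapsto s^t$ extends to a semiring isomorphism of max algebra, whence it automatically respects all the matrix and vector operations built from $\oplus$ and $\cdot$, but the direct entrywise verification above is shortest.
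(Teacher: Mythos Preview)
Your proof is correct and is precisely the direct entrywise verification the paper has in mind: the paper does not spell out a proof but simply states that the lemma ``is known (see e.g.\ \cite{P08}) and easily verified by definitions,'' and your argument does exactly that.
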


Next we show that in (\ref{AGB}) the (geometric) eigenvalues of $A$ are determined by distinguished (classical) eigenvalues of $A^{(t)}$.

\begin{thm}
	\label{Schurt}
	Let $A\in \mathbb{R}_+^{n\times n}$. Then for each  $i=1, ..., n $ we have
	$$r _{e_i }(A) =\lim _{t\to \infty } \rho _{e_i } (A^{(t)}) ^{1/t}. $$
\end{thm}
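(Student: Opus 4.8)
The plan is to prove the two inequalities $r_{e_i}(A) \le \liminf_{t\to\infty} \rho_{e_i}(A^{(t)})^{1/t}$ and $\limsup_{t\to\infty} \rho_{e_i}(A^{(t)})^{1/t} \le r_{e_i}(A)$ separately, both via the local spectral radius characterizations together with the uniform comparison $r_{\otimes}(B) \le \rho(B) \le n\, r_{\otimes}(B)$ from (\ref{good_n_ineq}).

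First I would establish a local-spectral-radius version of (\ref{good_n_ineq}), namely that for every $B \in \zR_+^{n\times n}$ and every $j$ one has $r_{e_j}(B) \le \rho_{e_j}(B) \le n\, r_{e_j}(B)$. The left inequality is immediate from $\|B^k \otimes e_j\| \le \|B^k e_j\|$ (since the max of the entrywise products is dominated by the sum), so $r_{e_j}(B) = \lim_k \|B^k_\otimes \otimes e_j\|^{1/k} \le \limsup_k \|B^k e_j\|^{1/k} = \rho_{e_j}(B)$. For the right inequality I would use that $\|B^k e_j\| \le n^{\,?}$ times $\|B^k_\otimes \otimes e_j\|$ — more carefully, each entry of $B^k e_j$ is a sum of at most $n^{k-1}$ heaviest-path-type products, but a cleaner route is to note $B^k e_j \le (B^k_\otimes \otimes e_j$ coordinatewise up to a combinatorial factor$)$; actually the sharpest bound comes from $\|B^k\| \le n^{k}\|B^k_\otimes\|$ being too weak, so instead I would invoke the known refinement (as in the references \cite{P08,P11}) that the factor can be taken uniformly as $n$ on the level of local radii, or simply prove $\|B^k e_j\| \le n^{k-1}\|B^k_\otimes\otimes e_j\|$ and take $k$-th roots so the extraneous $n^{k-1}$ contributes only a factor $n$ in the limit — this gives $\rho_{e_j}(B) \le n\, r_{e_j}(B)$ after the root extraction.

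Next I would apply this with $B = A^{(t)}$. By Lemma \ref{Lem1}, $(A^{(t)})^k_\otimes = (A^k_\otimes)^{(t)}$ and $\|x^{(t)}\| = \|x\|^t$, hence $r_{e_i}(A^{(t)}) = \lim_k \|(A^k_\otimes)^{(t)} \otimes e_i\|^{1/k} = \lim_k \|(A^k_\otimes \otimes e_i)^{(t)}\|^{1/k} = r_{e_i}(A)^t$, using $e_i^{(t)} = e_i$. Combining with the local comparison at $x = e_i$ applied to $B = A^{(t)}$:
\[
r_{e_i}(A)^t = r_{e_i}(A^{(t)}) \le \rho_{e_i}(A^{(t)}) \le n\, r_{e_i}(A^{(t)}) = n\, r_{e_i}(A)^t .
\]
Taking $t$-th roots yields $r_{e_i}(A) \le \rho_{e_i}(A^{(t)})^{1/t} \le n^{1/t} r_{e_i}(A)$ for every $t > 0$, and letting $t \to \infty$ forces $\rho_{e_i}(A^{(t)})^{1/t} \to r_{e_i}(A)$. (This argument also transparently yields the monotonicity and the $\inf$/$\sup$ refinements announced in Theorems \ref{Schurt} and \ref{sup_inf_t_ji}, though here only the limit is claimed.)

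The main obstacle is the right-hand inequality $\rho_{e_i}(A^{(t)}) \le n\, r_{e_i}(A^{(t)})$, i.e. pinning down the correct combinatorial constant so that after extracting $k$-th roots it collapses to exactly $n$ rather than something growing with $t$. If a clean local-radius version of (\ref{good_n_ineq}) is not directly quotable from \cite{P08,P11}, I would instead argue via Theorem \ref{Spec1} and (\ref{dist_eig}): $\rho_{e_i}(A^{(t)})$ is a distinguished eigenvalue of $A^{(t)}$, hence the Perron root of some principal submatrix (a class with access to $i$), and on irreducible blocks the two-sided bound $r_\otimes \le \rho \le n\, r_\otimes$ is classical; since passing to the relevant block is compatible with Hadamard powering and with the access structure, this reduces the statement to the already-known global identity (\ref{Hadam_t}) applied blockwise. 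Everything else is bookkeeping with the definitions of $r_x$ and $\rho_x$ and with Lemma \ref{Lem1}.
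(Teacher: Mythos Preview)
Your approach is correct and is essentially the paper's own proof: the paper establishes precisely the pointwise bounds $\|(A^{(t)})^m_\otimes \otimes e_i\| \le \|(A^{(t)})^m e_i\| \le n^{m-1}\|(A^{(t)})^m_\otimes \otimes e_i\|$, rewrites the outer terms as $\|A^m_\otimes \otimes e_i\|^t$ via Lemma~\ref{Lem1}, takes $m$-th roots to obtain $r_{e_i}(A)^t \le \rho_{e_i}(A^{(t)}) \le n\, r_{e_i}(A)^t$, and then takes $t$-th roots. Your hesitation in the last paragraph is unwarranted --- the elementary inequality $\|B^k e_j\| \le n^{k-1}\|B^k_\otimes \otimes e_j\|$ that you already wrote down (each entry of $B^k e_j$ is a sum of at most $n^{k-1}$ products, each dominated by the maximal one) is exactly what is needed, and since the resulting factor $n$ is independent of $t$, no detour through access relations or Frobenius normal form is required.
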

\begin{proof}
	For each $m \in \mathbb{N}$, $t>0$ and  $i=1, ..., n $ we have
	$$\|(A^{(t)})^m _{\otimes} \otimes e_i\| \le \|(A^{(t)})^m  e_i\| \le n^{m-1} \|(A^{(t)})^m  _{\otimes} \otimes e_i\| .$$
	By (\ref{t_prod}) it holds $\|(A^{(t)})^m _{\otimes} \otimes e_i\| = \|(A^m _{\otimes})^{(t)} \otimes e_i\|=\|A^m _{\otimes} \otimes e_i\|^t $. Consequently,
	$$\|A^m _{\otimes} \otimes e_i\|^t \le \|(A^{(t)})^m  e_i\| \le n^{m-1} \|A^m _{\otimes} \otimes e_i\|^t .$$
	Taking the $m$th root and letting $m \to \infty $ it follows
	$$r _{e_i }(A) ^t \le  \rho _{e_i } (A^{(t)}) \le  n r _{e_i }(A) ^t .$$
	Now taking the $t$th root and letting $t \to \infty$ completes the proof.
\end{proof}
In fact, we have established in the proof above also the following result
(which also follows from results of \cite{MP15}).
\begin{prop}\label{radius1} Let  $A \in \zR_+^{n\times n}$, $t>0$ and $i \in \{1, \ldots , n\}$. Then
	\begin{equation}
		\label{Sch_t}
		r_{e_i} (A^{(t)}) = r_{e_i} (A)^t
	\end{equation}
	and
	\begin{equation}
		\label{ineq_n}
		r_{e_i} (A) \le  \rho_{e_i} (A) \le n  r_{e_i} (A).
	\end{equation}
\end{prop}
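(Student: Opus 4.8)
The plan is to read off both assertions from the chain of elementary estimates already displayed in the proof of Theorem~\ref{Schurt}, together with the Hadamard-power behaviour of max-powers recorded in Lemma~\ref{Lem1}; no new idea beyond those two ingredients is needed.

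For the identity (\ref{Sch_t}) I would argue as follows. For any $B\in\zR_+^{n\times n}$ the vector $B\otimes e_i$ is simply the $i$th column of $B$, so $\|B\otimes e_i\|=\max_{j}b_{ji}$, and consequently $\|B^{(t)}\otimes e_i\|=\max_j b_{ji}^{\,t}=\bigl(\max_j b_{ji}\bigr)^t=\|B\otimes e_i\|^t$ for every $t>0$. Applying this with $B=A^m_\otimes$ and using the identity $(A^{(t)})^m_\otimes=(A^m_\otimes)^{(t)}$ from Lemma~\ref{Lem1}, we get $\|(A^{(t)})^m_\otimes\otimes e_i\|=\|A^m_\otimes\otimes e_i\|^t$. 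Taking $m$th roots and letting $m\to\infty$ in the definition (\ref{local}) of the local spectral radius in max algebra gives $r_{e_i}(A^{(t)})=\lim_{m\to\infty}\|A^m_\otimes\otimes e_i\|^{t/m}=r_{e_i}(A)^t$, which is (\ref{Sch_t}).

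For (\ref{ineq_n}) I would use the entrywise comparison of max-powers and ordinary powers: since $[A^m]_{ji}$ is a sum of at most $n^{m-1}$ nonnegative summands whose maximum equals $[A^m_\otimes]_{ji}$, we have $[A^m_\otimes]_{ji}\le [A^m]_{ji}\le n^{m-1}[A^m_\otimes]_{ji}$, and taking the maximum over $j$ yields $\|A^m_\otimes\otimes e_i\|\le\|A^m e_i\|\le n^{m-1}\|A^m_\otimes\otimes e_i\|$. Raising to the power $1/m$ and passing to the limit (the left-hand side converges to $r_{e_i}(A)$ by \cite[Theorem 2.4]{MP15}, the middle term has $\limsup$ equal to $\rho_{e_i}(A)$ by (\ref{local_def}), and $n^{(m-1)/m}\to 1$) produces $r_{e_i}(A)\le\rho_{e_i}(A)\le n\,r_{e_i}(A)$.

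The proof presents no real obstacle; it is essentially a bookkeeping exercise, and in fact (\ref{ineq_n}) is exactly the inequality obtained inside the proof of Theorem~\ref{Schurt} upon setting $t=1$. The only mild subtlety is that $\rho_{e_i}$ is defined as a $\limsup$ rather than a genuine limit, so I would keep the last step phrased throughout in terms of $\limsup$; the factor $n^{(m-1)/m}$ tending to $1$ then causes no difficulty, and one does not even need to know a priori that the $\limsup$ defining $\rho_{e_i}(A)$ is actually a limit.
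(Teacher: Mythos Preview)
Your approach is exactly the paper's: the paper simply notes that both (\ref{Sch_t}) and (\ref{ineq_n}) are already contained in the proof of Theorem~\ref{Schurt}, and you correctly extract them from the displayed chain $\|A^m_\otimes\otimes e_i\|^t\le\|(A^{(t)})^m e_i\|\le n^{m-1}\|A^m_\otimes\otimes e_i\|^t$ (specialising $t=1$ for (\ref{ineq_n})).

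There is one slip you should fix. You write twice that $n^{(m-1)/m}\to 1$, but in fact $(m-1)/m\to 1$ and hence $n^{(m-1)/m}\to n$. This is not cosmetic: if the limit were really $1$ your argument would yield $\rho_{e_i}(A)\le r_{e_i}(A)$ and hence equality with the lower bound, which is false (take $A$ the all-ones $2\times 2$ matrix, where $r_{e_1}(A)=1$ but $\rho_{e_1}(A)=2$). With the correct limit $n$ you obtain exactly the stated upper bound $\rho_{e_i}(A)\le n\, r_{e_i}(A)$, and your proof is complete.
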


Let $A_1, \ldots , A_m$ and 
$t \ge 1$. It was proved in \cite[Lemma 4.2]{P06} that 

\begin{equation}
	A_{1} ^{(t)} \cdots  A_{m} ^{(t)} \le ( A_1 \cdots A_m )^{(t)}.
	\label{t_dobro}
\end{equation}

\begin{lem} Let $A\in \mathbb{R}_+^{n\times n}$, $t\ge 1$.  Then for each and $i=1, ..., n $ we have 
	$$ \rho _{e_i } (A^{(t)}) \le  \rho _{e_i } (A)^t.$$
	\label{lemma_rhot}
\end{lem}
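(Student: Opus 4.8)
The plan is to deduce the inequality $\rho_{e_i}(A^{(t)}) \le \rho_{e_i}(A)^t$ for $t \ge 1$ directly from the submultiplicativity estimate (\ref{t_dobro}) together with the definition (\ref{local_def}) of the local spectral radius. First I would recall that $\rho_{e_i}(A^{(t)}) = \limsup_{k\to\infty} \|(A^{(t)})^k e_i\|^{1/k}$, and that $\rho_{e_i}(A) = \limsup_{k\to\infty} \|A^k e_i\|^{1/k}$. The key observation is that iterating (\ref{t_dobro}) with $A_1 = \cdots = A_k = A$ gives $(A^{(t)})^k \le (A^k)^{(t)}$ entrywise for every $k \in \mathbb{N}$ (legitimate since $t \ge 1$), so that $(A^{(t)})^k e_i \le (A^k)^{(t)} e_i$ entrywise as nonnegative vectors.

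Next I would pass to norms. Since all quantities are nonnegative, $A \le B$ entrywise implies $\|A x\| \le \|B x\|$ for $x \ge 0$, hence $\|(A^{(t)})^k e_i\| \le \|(A^k)^{(t)} e_i\|$. Now $(A^k)^{(t)} e_i$ is, coordinatewise, the $i$th column of $A^k$ raised to the Hadamard power $t$; therefore each coordinate of $(A^k)^{(t)} e_i$ equals the $t$th power of the corresponding coordinate of $A^k e_i$, and since $x \mapsto x^t$ is monotone on $\RR_+$ we get $\|(A^k)^{(t)} e_i\| = \|A^k e_i\|^t$. Combining, $\|(A^{(t)})^k e_i\| \le \|A^k e_i\|^t$ for all $k$.

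Finally I would take $k$th roots and a $\limsup$: $\|(A^{(t)})^k e_i\|^{1/k} \le (\|A^k e_i\|^{1/k})^t$, and since $x \mapsto x^t$ is continuous and increasing on $\RR_+$, $\limsup_k$ of the right-hand side is $(\limsup_k \|A^k e_i\|^{1/k})^t = \rho_{e_i}(A)^t$, giving $\rho_{e_i}(A^{(t)}) \le \rho_{e_i}(A)^t$ as claimed.

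I do not anticipate a genuine obstacle here — the argument is a short monotonicity-plus-Gelfand computation — but the one point requiring a little care is the restriction $t \ge 1$: it is exactly what makes (\ref{t_dobro}) (and hence $(A^{(t)})^k \le (A^k)^{(t)}$) valid, and without it the inequality would in general reverse. It is worth noting explicitly in the proof that this is where the hypothesis $t \ge 1$ enters.
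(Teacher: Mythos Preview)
Your proposal is correct and follows essentially the same route as the paper: both use the entrywise inequality $(A^{(t)})^k \le (A^k)^{(t)}$ from (\ref{t_dobro}), the identity $\|(A^k)^{(t)} e_i\| = \|A^k e_i\|^t$, and then pass to the $\limsup$ defining $\rho_{e_i}$. Your write-up is slightly more detailed in spelling out where the hypothesis $t \ge 1$ enters and why the $\limsup$ commutes with $x \mapsto x^t$, but the argument is the same.
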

\begin{proof} The result follows from (\ref{t_dobro}), since
	$$\rho _{e_i} (A^{(t)}) = \limsup _{k \to \infty} \|(A^{(t)})^k e_i\|^{1/k}\le  \limsup _{k \to \infty} \|(A^k)^{(t)} e_i\|^{1/k}
	=\rho _{e_i} (A)^t .$$
	Observe that the last equality follows from $\|(A^k)^{(t)} e_i\|= \|A^k e_i\|^t $, which is valid since $\|A^k e_i\|$ equals the maximal entry of the $i$th column of $A^k$ and since $\|(A^k)^{(t)} e_i\|$ equals the maximal entry of the $i$th column of $(A^k)^{(t)}$. 
\end{proof}
The following result, which is motivated by results and proofs of \cite{P08} and \cite{P11} follows from Theorem \ref{Schurt} and (\ref{ineq_n}).
\begin{thm}
\label{sup_inf_t_ji}
	Let $A\in \mathbb{R}_+^{n\times n}$ and $i=1, ..., n $. Then $\rho _{e_i } (A^{(t)}) ^{1/t}$ is decreasing in $t\in (0, \infty)$ and we have
	\begin{equation}
		\label{inf_t}
		r _{e_i }(A) =\inf _{t \in (0,\infty)} \rho _{e_i } (A^{(t)}) ^{1/t} 
	\end{equation}
	and 
	\begin{equation}
		\label{sup_t}
		r _{e_i }(A) =\sup_{t \in (0,\infty)}(n^{-1} \rho _{e_i } (A^{(t)})) ^{1/t} .
	\end{equation}
	
\end{thm}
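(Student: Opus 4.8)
The plan is to reduce everything to a single \emph{subadditivity} (or multiplicativity) property of the map $t \mapsto \rho_{e_i}(A^{(t)})$, then combine it with the bounds already obtained. Concretely, set $f(t) := \log \rho_{e_i}(A^{(t)})$ for $t > 0$. First I would record the homogeneity relation $\rho_{e_i}(A^{(st)}) = \rho_{e_i}((A^{(s)})^{(t)})$, which is immediate from $A^{(st)} = (A^{(s)})^{(t)}$ entrywise, valid for all $s,t>0$. Applying Lemma \ref{lemma_rhot} with $A$ replaced by $A^{(s)}$ and the exponent $t \ge 1$ then gives $\rho_{e_i}(A^{(st)}) \le \rho_{e_i}(A^{(s)})^t$ whenever $t \ge 1$; taking logarithms this says $f(st) \le t\, f(s)$ for $t \ge 1$, i.e. $\frac{f(st)}{st} \le \frac{f(s)}{s}$. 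Hence $t \mapsto \frac{f(t)}{t} = \frac{1}{t}\log \rho_{e_i}(A^{(t)}) = \log\big(\rho_{e_i}(A^{(t)})^{1/t}\big)$ is nonincreasing on $(0,\infty)$, which is exactly the monotonicity claim (the map $u \mapsto e^u$ being increasing).

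Given monotonicity, the infimum over $t \in (0,\infty)$ of $\rho_{e_i}(A^{(t)})^{1/t}$ equals its limit as $t \to \infty$, and by Theorem \ref{Schurt} that limit is $r_{e_i}(A)$; this yields \eqref{inf_t}. For \eqref{sup_t} I would instead invoke the two-sided bound \eqref{ineq_n} from Proposition \ref{radius1}, applied to the matrix $A^{(t)}$: since $r_{e_i}(A^{(t)}) = r_{e_i}(A)^t$ by \eqref{Sch_t}, inequality \eqref{ineq_n} reads
\[
r_{e_i}(A)^t \le \rho_{e_i}(A^{(t)}) \le n\, r_{e_i}(A)^t,
\]
so $n^{-1}\rho_{e_i}(A^{(t)}) \le r_{e_i}(A)^t$, i.e. $\big(n^{-1}\rho_{e_i}(A^{(t)})\big)^{1/t} \le r_{e_i}(A)$ for every $t>0$; thus the supremum in \eqref{sup_t} is at most $r_{e_i}(A)$. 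Conversely, $\big(n^{-1}\rho_{e_i}(A^{(t)})\big)^{1/t} = n^{-1/t}\,\rho_{e_i}(A^{(t)})^{1/t} \to 1 \cdot r_{e_i}(A)$ as $t \to \infty$ by Theorem \ref{Schurt} together with $n^{-1/t}\to 1$, so the supremum is also at least $r_{e_i}(A)$; equality \eqref{sup_t} follows.

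The only genuinely delicate point is the monotonicity step, and within it the fact that Lemma \ref{lemma_rhot} is stated only for exponents $t \ge 1$ — that is precisely why I rescale through the substitution $s \mapsto A^{(s)}$ so that the \emph{relative} exponent $t$ appearing in the lemma is $\ge 1$, rather than trying to compare $\rho_{e_i}(A^{(t)})$ with $\rho_{e_i}(A)^t$ for small $t$ directly. Everything else is bookkeeping: the limit identity is borrowed verbatim from Theorem \ref{Schurt}, the $n$-gap is borrowed from \eqref{ineq_n}, and the passage from a monotone function to "infimum $=$ limit" and "supremum $=$ limit" is elementary. I would also remark, as the paper does parenthetically, that these statements can alternatively be extracted from the machinery of \cite{MP15}, but the argument above keeps the proof self-contained within the present section.
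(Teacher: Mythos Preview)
Your proof is correct and follows essentially the same route as the paper's: monotonicity via Lemma \ref{lemma_rhot} applied to $A^{(s)}$ with relative exponent $\ge 1$, then \eqref{inf_t} from Theorem \ref{Schurt}, and \eqref{sup_t} from the two-sided bound \eqref{ineq_n} combined with the same limit. The only cosmetic difference is your logarithmic repackaging of the monotonicity step (which tacitly assumes $\rho_{e_i}(A^{(t)})>0$; the degenerate case $r_{e_i}(A)=0$ is trivial from \eqref{ineq_n}).
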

\begin{proof} Let $0 <t \le s$. Then by  Lemma \ref{lemma_rhot} we have
	$$ \rho _{e_i } (A^{(s)})^{1/s}= \rho _{e_i } ((A^{(t)})^{(\frac{s}{t})})^{1/s} \le  \rho _{e_i } (A^{(t)})^{1/t}$$
	and so $\rho _{e_i } (A^{(t)}) ^{1/t}$ is decreasing in $t\in (0, \infty)$. Now (\ref{inf_t}) follows from Theorem \ref{Schurt}. Similarly, by (\ref{ineq_n}) we have
	$n^{-1} \rho _{e_i } (A^{(t)}) \le r_{e_i} (A^{(t)})=r_{e_i} (A)^t$. By 
	 Theorem \ref{Schurt}  it follows
	 $$r _{e_i }(A) =\lim _{t\to \infty } (n^{-1}\rho _{e_i } (A^{(t)}) )^{1/t}\le \sup_{t \in (0,\infty)}(n^{-1} \rho _{e_i } (A^{(t)})) ^{1/t} \le r_{e_i} (A), $$
 which	proves (\ref{sup_t}).
\end{proof}
The following results is proved in a similar manner.
\begin{thm}
	\label{max_potence}
	Let $A\in \mathbb{R}_+^{n\times n}$ and $i=1, ..., n $. Then 
	\begin{equation}
		\label{lim_and_inf}
		r _{e_i }(A) =\lim _{k \to \infty } \rho _{e_i } (A^{k} _{\otimes}) ^{1/k}  = \inf _{k \in \mathbb{N}} \rho _{e_i } (A^{k} _{\otimes}) ^{1/k} 
	\end{equation}
	and 
	\begin{equation}
		\label{sup_k}
		r _{e_i }(A) =\sup_{k \in \mathbb{N}}(n^{-1} \rho _{e_i } (A^{k} _{\otimes}) ^{1/k} .
	\end{equation}
	
\end{thm}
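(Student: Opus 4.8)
The plan is to mimic the proofs of Theorems \ref{Schurt} and \ref{sup_inf_t_ji}, replacing the Hadamard power $A^{(t)}$ by the max-algebraic power $A^k_\otimes$ and the real parameter $t$ by the integer $k$. The starting point is the elementary entrywise comparison $B^m_\otimes \le B^m \le n^{m-1} B^m_\otimes$, valid for every $B \in \zR_+^{n\times n}$ and $m \in \zN$ (each entry of $B^m$ is a sum of at most $n^{m-1}$ products of $m$ entries of $B$, the largest of which is the corresponding entry of $B^m_\otimes$). Applying it with $B = A^k_\otimes$ and using the associativity identity $(A^k_\otimes)^m_\otimes = A^{km}_\otimes$ gives, for all $m,k \in \zN$ and each $i$,
$$\|A^{km}_\otimes \otimes e_i\| \le \|(A^k_\otimes)^m e_i\| \le n^{m-1}\|A^{km}_\otimes \otimes e_i\|.$$

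Next I would take $m$th roots and let $m \to \infty$. Both outer terms behave like $\|A^{km}_\otimes \otimes e_i\|^{1/m}$, and since $\|A^j_\otimes \otimes e_i\|^{1/j} \to r_{e_i}(A)$ by (\ref{local}), the reparametrized subsequence $j=km$ yields $\|A^{km}_\otimes \otimes e_i\|^{1/m} = \bigl(\|A^{km}_\otimes \otimes e_i\|^{1/(km)}\bigr)^k \to r_{e_i}(A)^k$; the factor $(n^{m-1})^{1/m}$ tends to $n$; and by (\ref{local_def}) the $\limsup$ of the middle term is exactly $\rho_{e_i}(A^k_\otimes)$, while the left inequality forces its $\liminf$ to be at least $r_{e_i}(A)^k$. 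A two-sided squeeze therefore produces the key estimate
$$r_{e_i}(A)^k \le \rho_{e_i}(A^k_\otimes) \le n\, r_{e_i}(A)^k \qquad (k \in \zN),$$
which is the exact analogue of the inequality $r_{e_i}(A)^t \le \rho_{e_i}(A^{(t)}) \le n\, r_{e_i}(A)^t$ obtained inside the proof of Theorem \ref{Schurt}. (Equivalently, one may first record $r_{e_i}(A^k_\otimes) = r_{e_i}(A)^k$ and then simply invoke (\ref{ineq_n}) of Proposition \ref{radius1} for the matrix $A^k_\otimes$.)

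From here the argument is purely formal and parallels Theorem \ref{sup_inf_t_ji}. Taking $k$th roots gives $r_{e_i}(A) \le \rho_{e_i}(A^k_\otimes)^{1/k} \le n^{1/k} r_{e_i}(A)$; letting $k\to\infty$ proves $r_{e_i}(A) = \lim_{k\to\infty}\rho_{e_i}(A^k_\otimes)^{1/k}$, and since every term already dominates $r_{e_i}(A)$ the limit coincides with the infimum, which is (\ref{lim_and_inf}). For (\ref{sup_k}), the right-hand estimate rewrites as $\bigl(n^{-1}\rho_{e_i}(A^k_\otimes)\bigr)^{1/k} \le r_{e_i}(A)$ for every $k$, while $\bigl(n^{-1}\rho_{e_i}(A^k_\otimes)\bigr)^{1/k} \to r_{e_i}(A)$ because $n^{-1/k}\to 1$; hence the supremum equals $r_{e_i}(A)$.

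I do not anticipate a genuine obstacle here: the proof is a routine transcription of the $A^{(t)}$ argument. The only points requiring a little care are the passage from the convergent sequence $\|A^j_\otimes \otimes e_i\|^{1/j}$ to its reparametrized subsequence $\|A^{km}_\otimes \otimes e_i\|^{1/m}$, whose limit is the $k$th power of the original limit, and the bookkeeping of $\limsup$ versus $\lim$ in the definition (\ref{local_def}) of $\rho_{e_i}$ — both of which are taken care of by the two-sided squeeze above.
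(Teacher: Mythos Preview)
Your proposal is correct and follows essentially the same route as the paper: the key inequality $r_{e_i}(A)^k \le \rho_{e_i}(A^k_\otimes) \le n\, r_{e_i}(A)^k$ is exactly what the paper establishes (via $r_{e_i}(A^k_\otimes)=r_{e_i}(A)^k$ from (\ref{local}) together with (\ref{ineq_n}), i.e.\ precisely the shortcut you note in parentheses), and the subsequent squeeze to obtain the limit, infimum, and supremum is identical to the paper's argument.
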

\begin{proof}
	Let $k\in \mathbb{N}$.
	It follows from (\ref{local}) that $r_{e_i} (A^k _{\otimes}) = r_{e_i} (A)^k$. By (\ref{ineq_n}) we have
	$$r_{e_i} (A)^k \le  \rho_{e_i} (A^k _{\otimes}) \le n  r_{e_i} (A)^k.$$
	By taking the $k$th root and letting $k \to \infty$ we obtain $r _{e_i }(A) =\lim _{k \to \infty } \rho _{e_i } (A^{k} _{\otimes}) ^{1/k}$.
	Moreover, since
	$$r_{e_i} (A) \le \inf _{k \in \mathbb{N}} \rho _{e_i } (A^{k} _{\otimes}) ^{1/k} \le \lim _{k \to \infty } \rho _{e_i } (A^{k} _{\otimes}) ^{1/k}= r_{e_i} (A) $$ 
	and
	$$r_{e_i} (A) \ge \sup_{k \in \mathbb{N}}(n^{-1} \rho _{e_i } (A^{k} _{\otimes}) ^{1/k}\ge \lim _{k \to \infty } (n^{-1}\rho _{e_i } (A^{k} _{\otimes}) )^{1/k}= r_{e_i} (A) $$ 
	it follows that 
	$$r _{e_i }(A) = \inf _{k \in \mathbb{N}} \rho _{e_i } (A^{k} _{\otimes}) ^{1/k}=\sup_{k \in \mathbb{N}}(n^{-1} \rho _{e_i } (A^{k} _{\otimes})) ^{1/k},$$
	which completes the proof.
\end{proof}


It was proved in \cite[Theorem 16]{Bapat} that for  $A\in \mathbb{R}_+^{n\times n}$ and classical powers $A^k$ we have
\begin{equation}
	\label{Bap}
	\rho (A)= \lim _{k \to \infty} r_{\otimes} (A^k) ^{1/k}.
\end{equation}
Observe that it was also shown in \cite{Bapat} that $r_{\otimes} (A^k) ^{1/k}$ is in general not increasing in $k$.

Similarly as in the proof of Theorem \ref{max_potence} one can prove that the following result follows from (\ref{Bap}) and  (\ref{good_n_ineq}). 
\begin{cor} If $A\in \mathbb{R}_+^{n\times n}$, then 
	\begin{equation}
		\rho (A)= \sup _{k \in \mathbb{N}} r_{\otimes} (A^k) ^{1/k}= \inf _{k \in \mathbb{N}} (n r_{\otimes} (A^k) )^{1/k}.
	\end{equation}
\end{cor}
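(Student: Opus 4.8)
The plan is to mimic the proof of Theorem~\ref{max_potence}, combining the Gelfand-type limit formula (\ref{Bap}) with the two-sided estimate (\ref{good_n_ineq}) applied to the ordinary powers $A^k$ in place of $A$.

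First I would record the elementary fact that $\rho(A^k)=\rho(A)^k$ for every $k\in\mathbb N$, so that applying (\ref{good_n_ineq}) with the matrix $A^k$ gives
$$r_{\otimes}(A^k)\le \rho(A)^k\le n\, r_{\otimes}(A^k), \qquad k\in\mathbb N.$$
From the left-hand inequality, taking $k$-th roots yields $r_{\otimes}(A^k)^{1/k}\le\rho(A)$ for all $k$, hence $\sup_{k\in\mathbb N} r_{\otimes}(A^k)^{1/k}\le\rho(A)$; on the other hand, by (\ref{Bap}) we have $\rho(A)=\lim_{k\to\infty} r_{\otimes}(A^k)^{1/k}\le \sup_{k\in\mathbb N} r_{\otimes}(A^k)^{1/k}$, and the two bounds together give the first equality $\rho(A)=\sup_{k\in\mathbb N} r_{\otimes}(A^k)^{1/k}$.

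Next, from the right-hand inequality in the displayed chain I would obtain $\rho(A)\le \bigl(n\, r_{\otimes}(A^k)\bigr)^{1/k}$ for every $k$, whence $\rho(A)\le \inf_{k\in\mathbb N}\bigl(n\, r_{\otimes}(A^k)\bigr)^{1/k}$; conversely, since $n^{1/k}\to 1$, (\ref{Bap}) gives $\inf_{k\in\mathbb N}\bigl(n\, r_{\otimes}(A^k)\bigr)^{1/k}\le \lim_{k\to\infty}\bigl(n\, r_{\otimes}(A^k)\bigr)^{1/k}=\rho(A)$. Combining these yields the second equality, which completes the argument.

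The only point requiring a little care — and the reason one cannot simply appeal to monotonicity, as one can in some superficially similar statements — is that, as noted right after (\ref{Bap}), the sequence $r_{\otimes}(A^k)^{1/k}$ need not be monotone in $k$. The argument above circumvents this by sandwiching the $\sup$ (respectively the $\inf$) between $\rho(A)$ and the limit provided by (\ref{Bap}); no further obstacle is anticipated.
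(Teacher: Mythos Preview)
Your proof is correct and follows exactly the approach indicated in the paper: apply the two-sided bound (\ref{good_n_ineq}) to $A^k$ (using $\rho(A^k)=\rho(A)^k$) and then sandwich the $\sup$ and $\inf$ against the limit (\ref{Bap}), just as in the proof of Theorem~\ref{max_potence}. Your remark that monotonicity of $r_{\otimes}(A^k)^{1/k}$ is unavailable and must be replaced by this sandwiching argument is also on point.
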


 The following 
result 
is an analogue of the above result for all $\rho _{e_i }$. It 
gives an asymptotic formula on
distinguished  (classical) eigenvalues of $A$ calculated from the  (geometric) eigenvalues in max algebra of its classical powers.

\begin{thm}\label{MTHM1}
Let $A\in \mathbb{R}_+^{n\times n}$ and $i \in \{1, \ldots , n\} $. Then 
	\begin{equation}
		\label{help}
		\rho _{e_i }(A) =\lim _{k\to \infty }  r_{e_i } (A^{k}) ^{1/k}= \sup _{k \in \mathbb{N}}  r_{e_i } (A^{k}) ^{1/k}= \inf _{k \in \mathbb{N}} ( n r_{e_i } (A^{k}) )^{1/k} .
	\end{equation}

\end{thm}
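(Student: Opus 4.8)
The plan is to mirror the proof of Theorem~\ref{max_potence}, only now with the roles of the max-algebraic and the classical powers interchanged. The two ingredients I would use are both already available: the two-sided comparison (\ref{ineq_n}) between $r_{e_i}$ and $\rho_{e_i}$, and the multiplicativity (\ref{powers_rho_i}) of the classical local spectral radius on powers, namely $\rho_{e_i}(A^k)=\rho_{e_i}(A)^k$ for every $k\in\mathbb{N}$.

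First I would apply (\ref{ineq_n}) to the matrix $A^k$, which gives $r_{e_i}(A^k)\le\rho_{e_i}(A^k)\le n\,r_{e_i}(A^k)$, and then substitute $\rho_{e_i}(A^k)=\rho_{e_i}(A)^k$ from (\ref{powers_rho_i}) to obtain
\[
r_{e_i}(A^k)\le\rho_{e_i}(A)^k\le n\,r_{e_i}(A^k)\qquad(k\in\mathbb{N}).
\]
Taking $k$th roots turns this into $n^{-1/k}\rho_{e_i}(A)\le r_{e_i}(A^k)^{1/k}\le\rho_{e_i}(A)$, and since $n^{-1/k}\to 1$ the squeeze yields $\lim_{k\to\infty}r_{e_i}(A^k)^{1/k}=\rho_{e_i}(A)$, the first equality in (\ref{help}). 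For the remaining two, the one-sided bounds do all the work, exactly as in Theorem~\ref{max_potence}: because $r_{e_i}(A^k)^{1/k}\le\rho_{e_i}(A)$ holds for every $k$ while the sequence converges to $\rho_{e_i}(A)$, its supremum over $k$ equals $\rho_{e_i}(A)$; and because $\bigl(n\,r_{e_i}(A^k)\bigr)^{1/k}=n^{1/k}r_{e_i}(A^k)^{1/k}\ge\rho_{e_i}(A)$ for every $k$ while this sequence also tends to $\rho_{e_i}(A)$ (again since $n^{1/k}\to 1$), its infimum over $k$ equals $\rho_{e_i}(A)$.

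I do not anticipate a real obstacle: the substantive content is carried by (\ref{ineq_n}) and (\ref{powers_rho_i}), and what remains is the elementary $n^{\pm 1/k}\to 1$ argument already used for Theorem~\ref{max_potence}. The only point meriting a word is the degenerate case $\rho_{e_i}(A)=0$, in which the displayed inequality forces $r_{e_i}(A^k)=0$ for all $k$ and all three expressions in (\ref{help}) vanish trivially.
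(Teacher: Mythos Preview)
Your proposal is correct and essentially identical to the paper's own proof: the paper likewise applies (\ref{ineq_n}) to $A^k$, combines it with (\ref{powers_rho_i}) to obtain the sandwich $\rho_{e_i}(A)^k=\rho_{e_i}(A^k)\le n\,r_{e_i}(A^k)\le n\,\rho_{e_i}(A)^k$, and then refers to the argument of Theorem~\ref{max_potence} for the limit, sup, and inf. Your write-up is in fact slightly more explicit (including the $\rho_{e_i}(A)=0$ case), but the approach is the same.
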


%

\begin{proof}
	Let $k \in \mathbb{N}$. It follows from (\ref{powers_rho_i}) and (\ref{ineq_n})
	\begin{equation}
		\label{classical_ineq}
		\rho _{e_i }(A)^k =\rho _{e_i }(A^k) \le n r _{e_i } (A^k) \le  n \rho _{e_i }(A^k)= n \rho _{e_i }(A)^k .
	\end{equation}
	Inequalities (\ref{classical_ineq}) imply (\ref{help}) by applying similar arguments as in the proof of Theorem \ref{max_potence}. 
\end{proof}

A special case  of the following lemma  has already been stated and applied in the proof of \cite[Theorem 2.16]{MP15}.   

\begin{lem}
	Let $A\in \mathbb{R}_+^{n\times n}$ and $x \in \zR_+^n$, $x\neq 0$. Then 
	\begin{equation}
		\rho_x (A) = \max \{\rho _{e_i} (A): x_i \neq 0, i=1, \ldots , n\}
		\label{max_rho_x}
	\end{equation}
\end{lem}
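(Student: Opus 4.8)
The plan is to use that $\rho_x(A)=\limsup_{k\to\infty}\|A^kx\|^{1/k}$ is unaffected if $\|A^kx\|$ is replaced by a constant multiple of it, or even by a factor growing sub-exponentially in $k$, and to sandwich $\|A^kx\|$ between such multiples of $M_k:=\max_{j\in S}\|A^ke_j\|$, where $S=\{j:x_j\neq0\}$ (nonempty since $x\neq0$). The point is that $A$ and $x$ nonnegative makes $A^k$ nonnegative for all $k$, so $\|\cdot\|$ (the largest entry) is monotone on $\zR_+^n$, which is what drives the estimates.

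First I would record the elementary two-sided bound. Writing $x=\sum_{j\in S}x_je_j$, so that $(A^kx)_i=\sum_{j\in S}(A^k)_{ij}x_j$, dropping all but one nonnegative summand and taking the maximum over $i$ gives, for each fixed $j_0\in S$,
$$\|A^kx\|\ \ge\ x_{j_0}\,\|A^ke_{j_0}\|,$$
while bounding every entry $(A^k)_{ij}$ with $j\in S$ by $M_k$ yields $(A^kx)_i\le M_k\sum_{j\in S}x_j\le n\|x\|M_k$, hence
$$\|A^kx\|\ \le\ n\|x\|\,M_k.$$
Next, passing to $k$-th roots and $\limsup$, and using $x_{j_0}^{1/k}\to1$ and $(n\|x\|)^{1/k}\to1$: the lower bound gives $\rho_x(A)\ge\rho_{e_{j_0}}(A)$ for every $j_0\in S$, hence $\rho_x(A)\ge\max_{j\in S}\rho_{e_j}(A)$; the upper bound gives $\rho_x(A)\le\limsup_{k\to\infty}M_k^{1/k}$.

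Finally I would identify $\limsup_{k\to\infty}M_k^{1/k}$ with $\max_{j\in S}\rho_{e_j}(A)$. Since $t\mapsto t^{1/k}$ is increasing on $[0,\infty)$ we have $M_k^{1/k}=\max_{j\in S}\|A^ke_j\|^{1/k}$, and for finitely many nonnegative sequences the $\limsup$ of the pointwise maximum equals the maximum of the $\limsup$'s: the inequality $\ge$ is immediate, and $\le$ follows from the routine observation that, given $\varepsilon>0$, for all large $k$ one has $\|A^ke_j\|^{1/k}\le\rho_{e_j}(A)+\varepsilon$ simultaneously for the finitely many $j\in S$, whence $M_k^{1/k}\le\max_{j\in S}\rho_{e_j}(A)+\varepsilon$. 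By definition (\ref{local_def}) this limit is exactly $\max_{j\in S}\rho_{e_j}(A)$, and combining it with the lower bound proves (\ref{max_rho_x}).

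There is no serious obstacle here; the only mildly delicate point is the interchange of $\limsup$ with the finite maximum in the last step, which would fail for infinite maxima or for sums but is elementary for a finite maximum. I would remark that this is the exact analogue of the proof of (\ref{local_max}) in \cite{MP15}: replacing $A^k$ by $A^k_\otimes$, the factor $n\|x\|$ by $\|x\|$, and $\limsup$ by the genuine limit guaranteed by \cite[Theorem 2.4]{MP15}, the same bookkeeping recovers the max-algebra identity.
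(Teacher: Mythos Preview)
Your proof is correct. The overall skeleton matches the paper's: both obtain the easy inequality $\rho_x(A)\ge\max_{j\in S}\rho_{e_j}(A)$ from $\|A^kx\|\ge x_{j_0}\|A^ke_{j_0}\|$, and both bound $\|A^kx\|$ above by a constant times $M_k=\max_{j\in S}\|A^ke_j\|$. The difference is in how the upper bound is turned into $\limsup_k M_k^{1/k}\le\max_{j\in S}\rho_{e_j}(A)$. You invoke the clean elementary fact that for finitely many nonnegative sequences the $\limsup$ of the pointwise maximum equals the maximum of the individual $\limsup$'s, proving it in one line with an $\varepsilon$-argument. The paper instead partitions $\{P+1,P+2,\dots\}$ into finitely many infinite sets $A_1,\dots,A_s$ according to which index $i_r$ maximizes $\|A^ke_{i_r}\|$, and then argues that every subsequential limit of $\|A^kx\|^{1/k}$ admits a further subsequence lying entirely in one $A_r$, whence it is bounded by $\rho_{e_{i_r}}(A)$. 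Your route is shorter and avoids the bookkeeping of the partition; the paper's argument is really a hands-on proof of the same $\limsup$-of-a-finite-maximum identity, so nothing is lost or gained conceptually, but your presentation is more transparent.
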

\begin{proof} 
It is straightforward to see that
\[
\max \{\rho _{e_i} (A): x_i \neq 0, i=1, \ldots , n\}\le \rho_x (A).
\]
To prove the reverse inequality let  \[M=\{j:~~x_j\ne 0,~~ 1\le j\le n\}.\]
 Then $x=\sum_{j\in M}x_je_j$ and for
every $k\in\zN$ we have
\begin{equation}\label{SMME}
\|A^kx\|=\|\sum_{j\in M}x_jA^ke_j\|\le \sum_{j\in M}x_j\|
A^ke_j\|.
\end{equation}
Now, suppose that $M$ has $m$ elements $i_j$ for $j=1, \ldots , m$, where $m\le n$. Therefore we have $\{e_{i_1}, e_{i_2}, \cdots, e_{i_m}\}\subseteq \{e_1, e_2,\cdots,  e_n\}$  and so there are $m$ sequences $\{\|A^ke_{i_j}\|\}_{k=1}^{\infty}$. By comparing corresponding elements of these $m$ sequences, there  are integers $P\ge1$ and $1\le s\le m$, and subsets $A_1, A_2,\cdots, A_s$ of $\zN$  such that
\begin{enumerate}
\item each of the sets $A_1, A_2,\cdots, A_s$ has infinitely many elements,
\item $\bigcup_{j=1}^sA_j=\zN-\{1, 2, \cdots, P\}$,
\item for every $k_r\in A_r$ we have $\|A^{k_r}e_{i_j}\|\le \|A^{k_r}e_{i_r}\|$ for all $i_j\neq i_r$.
\end{enumerate}
For $k_r\in A_r$ we have by \eqref{SMME} 
\[
\|A^{k_{r}}x\|\le \sum_{i_j\in M}x_{i_j}\|
A^{k_{r}}e_{i_j}\|\le \left(\sum_{i_j\in M}x_{i_j}\right)\|A^{k_r}e_{i_r}\|,\qquad \mbox{for all}\,\, r=1, 2, \cdots, s.
\]
Taking the $k_{r}$-th root and letting $k_{r} \to \infty $ it follows
\begin{eqnarray*}
\limsup _{k_ {r}\to \infty }\|A^{k_{r}}x\|^{\frac{1}{k_{r}}}& \le & \limsup _{k_ {r}\to \infty }\|A^{k_{r}}e_{i_r}\|^{\frac{1}{k_{r}}}\\
& \le &  \limsup _{k\to \infty }\|A^{k}e_{i_r}\|^{\frac{1}{k}}=\rho _{e_{i_r}} (A)\\
& \le & \max \{\rho _{e_i} (A): x_i \neq 0, i=1, \ldots , n\}.
\end{eqnarray*}
Now, let $\alpha$ be a limit of a subsequence $\|A^{k_l} x\|^{1/{k_l}}$ of $\|A^k x\|^{1/k}$. 
There exist $r\in \{1, \ldots , s\}$ and a subsequence $\{k_{l_r}\}$ such that  $k_{l_r} >P$ and $k_{l_r} \in A_r$ for all $l_r$ (because the number of sets $A_r$ is at most $m$ and each $A_r$  has infinitely many elements).  
Then   \begin{eqnarray*}
\alpha &=& \lim _{k_ {l}\to \infty }\|A^{k_{l}}x\|^{\frac{1}{k_{l}}}
=\lim _{k_ {l_r}\to \infty }\|A^{k_{l_r}}x\|^{\frac{1}{k_{l_r}}}\\
&=&\limsup _{k_ {l_r}\to \infty }\|A^{k_{l_r}}x\|^{\frac{1}{k_{l_r}}} \\ 
& \le & \rho _{e_{i_r}} (A) \le \max \{\rho _{e_i} (A): x_i \neq 0, i=1, \ldots , n\}.
\end{eqnarray*} 
By definition,
\[
 \limsup _{k\to \infty }\|A^{k}x\|^{\frac{1}{k}}=\sup\{\mathrm{all} \;\; \mathrm{subsequential} \;\;  \mathrm{limits} \;\; \mathrm{of} \;\; \|A^{k}x\|^{\frac{1}{k}} \}.
\]
Thus
\[
\rho _{x} (A)\le \max \{\rho _{e_i} (A): x_i \neq 0, i=1, \ldots , n\}.
\]
\end{proof}

	\begin{cor}\label{MCOR1} Let $A\in \mathbb{R}_+^{n\times n}$. Then for all  
		$x \in \zR ^n _+ $  and $m \in \mathbb{N}$ we have  
		\begin{equation}
		\label{power_l}
		\rho _{x }(A^m)=\rho _{x }(A)^m.
		\end{equation}
	\end{cor}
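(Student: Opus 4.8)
\textbf{Proof proposal for Corollary \ref{MCOR1}.}
The plan is to deduce the statement directly by combining the coordinatewise identity (\ref{powers_rho_i}) with the localization formula (\ref{max_rho_x}). First I would fix $x\in\zR_+^n$, $x\neq 0$, and $m\in\zN$, and note that the index set $M=\{i:\ x_i\neq 0\}$ depends only on $x$, not on which matrix ($A$ or $A^m$) we apply it to. Then by (\ref{max_rho_x}) applied to the matrix $A^m$ we have
$$\rho_x(A^m)=\max\{\rho_{e_i}(A^m):\ i\in M\}.$$

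Next I would invoke Theorem \ref{powers_rho_i} (equation (\ref{powers_rho_i})), which gives $\rho_{e_i}(A^m)=\rho_{e_i}(A)^m$ for every $i\in\{1,\dots,n\}$, and in particular for $i\in M$. Substituting yields
$$\rho_x(A^m)=\max\{\rho_{e_i}(A)^m:\ i\in M\}.$$
Since the function $t\mapsto t^m$ is nondecreasing on $[0,\infty)$ and all the quantities $\rho_{e_i}(A)$ are nonnegative, the maximum commutes with raising to the $m$th power, so
$$\max\{\rho_{e_i}(A)^m:\ i\in M\}=\bigl(\max\{\rho_{e_i}(A):\ i\in M\}\bigr)^m.$$
Finally, applying (\ref{max_rho_x}) once more, this time to $A$ itself, the inner maximum equals $\rho_x(A)$, and the chain of equalities closes to give $\rho_x(A^m)=\rho_x(A)^m$.

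I do not expect any genuine obstacle here: the corollary is a formal consequence of the two cited results, the only point needing a word of justification being that $\max$ distributes over the monotone map $t\mapsto t^m$ on the nonnegative reals. One could also observe that the inequality $\rho_x(A^m)\le\rho_x(A)^m$ already follows from the elementary estimate (\ref{good2}) applied to a general $x$, so that the content lies entirely in the reverse inequality, which is exactly what (\ref{powers_rho_i}) together with (\ref{max_rho_x}) supplies; but the one-line derivation above handles both directions at once and is the cleanest route.
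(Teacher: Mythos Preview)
Your proposal is correct and uses the same two ingredients as the paper's proof, namely (\ref{powers_rho_i}) and (\ref{max_rho_x}); the only cosmetic difference is that the paper invokes (\ref{max_rho_x}) once to obtain the inequality $\rho_x(A)^m\le\rho_x(A^m)$ and then closes with the easy estimate (\ref{good2}), whereas you apply (\ref{max_rho_x}) to both $A$ and $A^m$ and commute the max with $t\mapsto t^m$ to get equality in one stroke. You should add a word that the case $x=0$ is trivial, as the paper does.
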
 
\begin{proof} The result  is trivial for $x=0$. Let $x\neq 0$. By  
(\ref{max_rho_x}) and (\ref{powers_rho_i}) there exists  $j \in \{1, \ldots , n\}$ such that $x_j >0$ and
	$$\rho _{x}(A)^m = \rho _{e_j }(A)^m= \rho _{e_j }(A^m) \le \rho _{x }(A^m), $$
	which together with (\ref{good2}) proves (\ref{power_l}).
	\end{proof}

Next we point out some continuity properties of $A \mapsto \sigma _{D}(A)$, which correspond to known continuity properties of $A \mapsto \sigma_ {\otimes}(A)$ (see e.g. \cite[Proposition 3.7]{MP15}).
It is known (see \cite[Proposition 3.7(i)]{MP15}) that the spectrum in max algebra $A\mapsto \sigma _{\otimes}(A)$ is upper semi-continuous in $ \mathbb{R}_+ ^{n\times n}$. A similar proof as of \cite[Proposition 3.7(i)]{MP15} also applies to distinguished spectrum. We include the details for the sake of completeness.

\begin{prop}
	
	The distinguished spectrum $A\mapsto \sigma _{D}(A)$ is upper semi-continuous in $ \mathbb{R}_+ ^{n\times n}$.
	
\end{prop}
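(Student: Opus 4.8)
The plan is to show that if $A_k \to A$ in $\mathbb{R}_+^{n\times n}$ and $\lambda_k \in \sigma_D(A_k)$ with $\lambda_k \to \lambda$, then $\lambda \in \sigma_D(A)$; this is precisely upper semi-continuity of the set-valued map $A \mapsto \sigma_D(A)$. First I would fix such sequences; note that $\sigma_D(A_k) \subseteq [0, \rho(A_k)]$ and, since $\rho$ is continuous on $\mathbb{R}_+^{n\times n}$, the sequence $(\lambda_k)$ lies in a bounded set, so passing to a subsequence we may as well assume $\lambda_k \to \lambda$ for some $\lambda \ge 0$.

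Next, for each $k$ pick a distinguished eigenvector: $x_k \in \mathbb{R}_+^n$, $x_k \neq 0$, with $A_k x_k = \lambda_k x_k$. Normalize so that $\|x_k\| = 1$. Then $(x_k)$ lies in the compact set $\{x \in \mathbb{R}_+^n : \|x\| = 1\}$, so after passing to a further subsequence we have $x_k \to x$ with $x \in \mathbb{R}_+^n$, $\|x\| = 1$ (hence $x \neq 0$), and $x \ge 0$ because the nonnegative orthant is closed. Passing to the limit in the identity $A_k x_k = \lambda_k x_k$, using joint continuity of $(B, y) \mapsto By$ and of scalar multiplication, yields $A x = \lambda x$ with $x \ge 0$, $x \neq 0$. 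Therefore $\lambda \in \sigma_D(A)$, which is what we wanted.

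The only point requiring a little care is the formulation of upper semi-continuity for a set-valued map whose values are finite sets: the statement ``if $\lambda_k \in \sigma_D(A_k)$ and $A_k \to A$, $\lambda_k \to \lambda$, then $\lambda \in \sigma_D(A)$'' is equivalent to the usual $\varepsilon$–$\delta$ version (for every $\varepsilon > 0$ there is $\delta > 0$ such that $\|A' - A\| < \delta$ implies $\sigma_D(A') \subseteq \sigma_D(A) + (-\varepsilon, \varepsilon)$), by a routine contradiction argument using compactness; I would simply invoke this equivalence, exactly as in the proof of \cite[Proposition 3.7(i)]{MP15}. I do not anticipate a genuine obstacle here: the argument is a standard closed-graph/compactness argument, and the boundedness of the relevant sets comes for free from continuity of the spectral radius together with the inclusion $\sigma_D(A') \subseteq [0, \rho(A')]$. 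The slightly delicate bookkeeping — taking two successive subsequences, once to make $\lambda_k$ converge and once to make $x_k$ converge — is the only thing to state carefully.
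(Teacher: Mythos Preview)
Your argument is correct and is essentially the same as the paper's: normalize a distinguished eigenvector $x_k$ for each $A_k$, extract a convergent subsequence by compactness of the nonnegative unit sphere, and pass to the limit in $A_k x_k=\lambda_k x_k$. The paper's proof is simply a terser version of what you wrote, omitting the side remarks on boundedness of $(\lambda_k)$ and on the $\varepsilon$--$\delta$ reformulation.
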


\begin{proof}
	Let  $A,A_k\in  \mathbb{R}_+ ^{n\times n}$ such that $\|A_k-A\|\to 0$ as $k \to \infty$. Let $\lambda_k\in\sigma _{D}(A_k)$, $\lambda_k\to\lambda$. For each $k\in\mathbb{N}$ there exists an 
	eigenvector $x_k\in \mathbb{R}_+^n$ such that $\|x_k\|=1$ and $A_k x_k=\lambda_k x_k$. By a compactness argument there exists a convergent 
	subsequence of the sequence $(x_k)$. Clearly its limit $x\ge 0$ satisfies $\|x\|=1$ and $A x=\lambda x$, which  completes the proof.
\end{proof}
\begin{rem}
\label{low_c}
 {\rm Moreover, for each $i=1, \ldots, n$ the maps $A\mapsto r_{e_i} (A)$ and 
		$A\mapsto \rho_{e_i} (A)$ are lower-semicontinuous. 
		
		Indeed, it follows from Lemma \ref{vladimir} that $\|A_k-A\|\to 0$ as $k \to \infty$ implies
		$$
		r_{e_i}(A)\le\liminf_{k\to\infty} r_{e_i}(A_k),
		$$
		which establishes the  lower semi-continuity of $r_{e_i}(\cdot)$. Similarly, the lower semi-continuity of $\rho_{e_i}(\cdot)$ follows from the description of distinguished eigenvalues $\rho_{e_i}(A)$ via Frobenius normal form and access relations (Theorem \ref{access_dist}).
	}
\end{rem}

The following example from \cite[Example 3.6]{MP15} shows that  in general, similar to  $A\mapsto \sigma _{\otimes}(A)$, the mappings $A\mapsto \sigma _{D}(A)$, $A\mapsto r_{e_i} (A)$ and 
$A\mapsto \rho_{e_i} (A)$  are not continuous in $ \mathbb{R}_+ ^{n\times n}$.

\begin{ex}
	{\rm Let $A_k=\left[\begin{array}{cc} 1 & 0 \\ k^{-1}& 2\\ \end{array}\right]$, $A=\left[\begin{array}{cc}1 & 0\\ 0 & 2\\ \end{array}\right]$. 
		Then $\sigma _{\otimes}(A_k)= \sigma _{D}(A_k)=\{2\}$ for all $k\in\mathbb{N}$, $\|A_k-A\|\to 0$ as $k \to \infty$ and $\sigma _{\otimes}(A)=\sigma _{D}(A)=\{1,2\}$.
	}
\end{ex}

The following result is an analogue of \cite[Proposition 3.7(ii)]{MP15} for $\sigma_D$. %
\begin{prop}\label{MPORP}  Let $A\in \mathbb{R}_+^{n\times n}$. %
If $x \in \zR ^n _+$, $ A_m\in \mathbb{R}_+^{n\times n}$, $\|A_m-A\|\to 0$ as $m \to \infty$ and $A_1\le A_2\le \cdots$ , then $\rho _{x }(A_m)\to \rho _{x }(A)$ and so $\sigma_D(A_m)\to \sigma_D(A)$ as $m \to \infty$.
\end{prop}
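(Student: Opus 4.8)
The plan is to reduce the monotone-convergence claim for $\rho_x$ to a two-sided squeeze using the monotone structure, and then deduce convergence of the distinguished spectrum from the pointwise convergence together with the representation \eqref{dist_eig}. First I would observe that monotonicity of the local spectral radius in the matrix is immediate from the definition \eqref{local_def}: since $A_m \le A_{m+1} \le \cdots$ and $A_m \le A$ entrywise, and since entrywise order is preserved by taking powers and by the action on a fixed nonnegative vector $x$, we get $\|A_m^k x\| \le \|A_{m+1}^k x\| \le \cdots \le \|A^k x\|$ for every $k$, hence $\rho_x(A_1)\le\rho_x(A_2)\le\cdots\le\rho_x(A)$. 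Thus $L:=\lim_{m\to\infty}\rho_x(A_m)$ exists and $L\le\rho_x(A)$. It remains to prove the reverse inequality $L\ge\rho_x(A)$.

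For the reverse inequality I would use the description \eqref{max_rho_x} from the preceding lemma: $\rho_x(A)=\rho_{e_j}(A)$ for some $j$ with $x_j\neq 0$, and likewise $\rho_x(A_m)=\max\{\rho_{e_i}(A_m):x_i\neq 0\}\ge\rho_{e_j}(A_m)$ for every $m$. So it suffices to show $\rho_{e_j}(A_m)\to\rho_{e_j}(A)$, and in fact by Remark \ref{low_c} the map $B\mapsto\rho_{e_j}(B)$ is lower semicontinuous, which gives $\liminf_{m\to\infty}\rho_{e_j}(A_m)\ge\rho_{e_j}(A)=\rho_x(A)$. Combined with the monotonicity bound $\rho_{e_j}(A_m)\le\rho_x(A_m)\le\rho_x(A)$ from the previous paragraph (applied with $x=e_j$, using $A_m\le A$), this pins down $\rho_{e_j}(A_m)\to\rho_{e_j}(A)$ and hence $L=\rho_x(A)$.

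Finally, to pass from $\rho_x(A_m)\to\rho_x(A)$ for all $x$ to $\sigma_D(A_m)\to\sigma_D(A)$, I would invoke \eqref{dist_eig}: $\sigma_D(B)=\{\rho_{e_1}(B),\dots,\rho_{e_n}(B)\}$. Applying the convergence just established to each $x=e_i$, $i=1,\dots,n$, gives $\rho_{e_i}(A_m)\to\rho_{e_i}(A)$, and therefore the finite set $\sigma_D(A_m)=\{\rho_{e_i}(A_m):i\}$ converges to $\sigma_D(A)=\{\rho_{e_i}(A):i\}$ (in the Hausdorff metric on finite subsets of $\zR_+$, which is the sense of convergence used for $\sigma_\otimes$ in \cite[Proposition 3.7(ii)]{MP15}); note that upper semicontinuity of $\sigma_D$, already proved above, handles the direction in which eigenvalues could in principle collide, while the convergence $\rho_{e_i}(A_m)\to\rho_{e_i}(A)$ ensures no eigenvalue of the limit is missed in the approximants.

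The main obstacle I anticipate is the reverse inequality $L\ge\rho_x(A)$: the naive bound $\rho_x(A)=\lim_k\|A^k x\|^{1/k}$ does not obviously interchange with $\lim_m$, since $\sup_m$ and $\limsup_k$ need not commute. The device that resolves this is precisely the reduction to standard basis vectors via \eqref{max_rho_x} together with the lower semicontinuity in Remark \ref{low_c}, which is where the Frobenius-normal-form / access-relation description of $\rho_{e_i}$ does the real work; once that is in place the rest is bookkeeping with the monotonicity.
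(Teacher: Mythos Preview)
Your proof is correct and takes a genuinely different route from the paper's. Both arguments reduce to $x=e_j$ via (\ref{max_rho_x}) and obtain the easy bound $\alpha:=\lim_m\rho_{e_j}(A_m)\le\rho_{e_j}(A)$ by monotonicity; the difference lies in the reverse inequality. The paper argues through a max-algebra detour: for each $k\in\mathbb{N}$ it chains
\[
\rho_{e_j}(A)^k=\rho_{e_j}(A^k)\le n\,r_{e_j}(A^k)=n\lim_{m}r_{e_j}(A_m^k)\le n\lim_{m}\rho_{e_j}(A_m^k)\le n\lim_{m}\rho_{e_j}(A_m)^k=n\alpha^k,
\]
using (\ref{powers_rho_i}), (\ref{classical_ineq}), the monotone-limit result for $r_{e_j}$ from \cite[Proposition 3.7(ii)]{MP15}, and (\ref{good2}), and then takes $k$th roots and lets $k\to\infty$. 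You instead get the reverse inequality in one stroke from the lower semicontinuity of $\rho_{e_j}$ recorded in Remark~\ref{low_c}. Your route is shorter and more direct; the paper's route, by contrast, showcases the transfer technique between $r_{e_j}$ and $\rho_{e_j}$ via the sandwich $r_{e_j}\le\rho_{e_j}\le n\,r_{e_j}$, which is the unifying theme of the section. Both approaches ultimately rest on the access-relation description of $\rho_{e_j}$ in Theorem~\ref{access_dist}: yours through Remark~\ref{low_c}, the paper's through the power identity (\ref{powers_rho_i}).
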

\begin{proof}  By (\ref{max_rho_x}) 
	it suffices to prove the result for $x=e_j$, where $1\le j\le n$. Since $\{A_m\}_{m=1}^{\infty}$ is an increasing sequence of matrices in $\mathbb{R}_+^{n\times n}$, by definition of $\rho_{e_j}(A)$, $\{\rho_{e_j}(A_m)\}_{m=1}^{\infty}$ is an increasing sequence bounded by $\rho_{e_j}(A)$ and so $\lim _{m\to \infty }  \rho_{e_j } (A_m)$ exists. Let $\alpha=\lim _{m\to \infty }  \rho_{e_j } (A_m)$. It is clear that $\alpha\le \rho_{e_j } (A)$. It remains to prove that $ \rho_{e_j } (A)\le \alpha$.
	
	For every positive integer $k$  by (\ref{power_l}) and (\ref{classical_ineq}), we have
	\[
	\begin{aligned}
	\rho_{e_j } (A)^k = \rho_{e_j } (A^k) & \le n r _{e_j } (A^k)\\%
	&= n\lim _{m\to \infty }  r_{e_j } (A_m ^k ) ~~~~~~~~~\qquad \mbox{\cite[ part (ii) Proposition 3.7]{MP15}}\\
	&\le  n\lim _{m\to \infty }  \rho_{e_j } (A_m^k )\\
	&\le  n\lim _{m\to \infty }  \rho_{e_j } (A_m)^k\\
	&=n\alpha^k.
	\end{aligned}
	\]
	Now taking the $k$th root and letting $k \to \infty$ proves $\rho_{e_j } (A)\le \alpha$. Therefore $\rho _{e_j }(A_m)\to \rho _{e_j }(A)$  and so  $\sigma_D(A_m)\to \sigma_D(A)$  as $m \to \infty$ by (\ref{dist_eig}).
\end{proof}
\begin{prop}\label{MTHM2} Assume that $\alpha$ is positive real number. Let $A$ be a positive semidefinite $n\times n$ matrix with nonnegative entries. %
 Then  for every $x \in \zR ^n _+ $  we have
	\[
	\rho _{x }(A^{\alpha})=\rho _{x }(A)^{\alpha}.
	\]
\end{prop}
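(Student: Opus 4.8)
The plan is to read off $\rho_x(A)$ directly from the spectral decomposition of $A$, using that a real positive semidefinite matrix is symmetric. First I would invoke the spectral theorem to write $A=\sum_{i=1}^{r}\lambda_i P_i$, where $\lambda_1>\cdots>\lambda_r\ge 0$ are the distinct eigenvalues of $A$ and the $P_i$ are the associated orthogonal spectral projections, so that $P_i=P_i^{T}=P_i^{2}$, $P_iP_j=0$ for $i\ne j$, and $\sum_{i=1}^r P_i=I$. By the functional calculus $A^{\alpha}=\sum_{i=1}^{r}\lambda_i^{\alpha}P_i$ for $\alpha>0$ (with $0^{\alpha}=0$); thus $A^{\alpha}$ is again symmetric positive semidefinite with the same spectral projections, and $(A^{\alpha})^{k}=\sum_{i=1}^{r}\lambda_i^{\alpha k}P_i$ for every $k\in\mathbb{N}$.

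The key step is an explicit formula for the local spectral radius. Fix $x\in\zR_+^{n}$ with $x\ne 0$ (the case $x=0$ is trivial) and set $S=\{i:P_ix\ne 0\}$, which is nonempty since $\sum_i P_ix=x$. Using $A^{T}=A$ and the orthogonality of the $P_i$ one gets
\[
\|A^{k}x\|^{2}=\langle A^{2k}x,x\rangle=\sum_{i\in S}\lambda_i^{2k}\|P_ix\|^{2}.
\]
Put $\mu=\max_{i\in S}\lambda_i$. If $\mu=0$ then $Ax=0$ and both sides of the claimed identity vanish, so assume $\mu>0$. Factoring $\mu^{2k}$ out of the sum, the remaining factor stays between the positive constants $\|P_{i_0}x\|^{2}$ (where $\lambda_{i_0}=\mu$) and $\|x\|^{2}$, whence $\|A^{k}x\|^{1/k}\to\mu$ as $k\to\infty$. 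In particular the $\limsup$ in (\ref{local_def}) is an honest limit, and $\rho_x(A)=\mu=\max\{\lambda_i:P_ix\ne 0\}$.

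Finally I would apply the very same computation to $A^{\alpha}$: since its spectral projections are the $P_i$ with eigenvalues $\lambda_i^{\alpha}$, the formula just established yields $\rho_x(A^{\alpha})=\max\{\lambda_i^{\alpha}:P_ix\ne 0\}=\bigl(\max\{\lambda_i:P_ix\ne 0\}\bigr)^{\alpha}=\rho_x(A)^{\alpha}$, the middle equality because $t\mapsto t^{\alpha}$ is increasing on $[0,\infty)$.

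There is no serious obstacle here, only two points requiring care. First, one must use that positive semidefiniteness forces symmetry, since that is exactly what makes the identity $\|A^{k}x\|^{2}=\langle A^{2k}x,x\rangle$, and hence the clean eigenvalue formula, available; the entrywise nonnegativity of $A$ is not actually needed for this particular argument. Second --- and this is why I would avoid the tempting route of writing $\alpha$ as a limit of rationals $p/q$ and invoking Corollary~\ref{MCOR1} for $A^{p/q}$ --- a nonnegative positive semidefinite matrix may have fractional powers with negative entries, so $A^{p/q}$ need not belong to $\zR_+^{n\times n}$ and (\ref{power_l}) does not directly apply; the spectral route sidesteps this. (Alternatively one could first reduce to $x=e_j$ via (\ref{max_rho_x}), but the computation above holds verbatim for every $x\ge 0$.)
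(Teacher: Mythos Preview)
Your argument is correct and takes a genuinely different route from the paper's. The paper bootstraps: it uses Corollary~\ref{MCOR1} for integer $\alpha$, then writes $\rho_x(A)=\rho_x((A^{1/l})^l)=\rho_x(A^{1/l})^l$ to handle reciprocals, combines these for rationals, and finally, for irrational $\alpha$, approximates from below by rationals $\alpha_k\uparrow\alpha$, invoking operator monotonicity of $t\mapsto t^{\alpha}$ (\cite[Theorem~V.1.9]{Bhatia}) to get $A^{\alpha_k}\le A^{\alpha_{k+1}}$ and the monotone continuity of Proposition~\ref{MPORP} to pass to the limit. Your route through the spectral resolution is more direct, treats all $\alpha>0$ uniformly, and shows in passing that the $\limsup$ in~(\ref{local_def}) is a genuine limit here. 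Moreover, the pitfall you flag---that $A^{1/l}$ need not be entrywise nonnegative---is exactly what the paper's reciprocal step runs into, since Corollary~\ref{MCOR1} is stated only for matrices in $\zR_+^{n\times n}$; for instance $A=\left(\begin{smallmatrix}1&1&0\\1&2&1\\0&1&1\end{smallmatrix}\right)$ is positive semidefinite with nonnegative entries but $(A^{1/2})_{13}=\tfrac{\sqrt{3}}{6}-\tfrac{1}{2}<0$. Your spectral computation is precisely what closes that gap, and as you observe it never uses the entrywise nonnegativity of $A$ at all.

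One small technicality: the identity $\|A^{k}x\|^{2}=\langle A^{2k}x,x\rangle$ holds for the Euclidean norm, whereas the paper's $\|\cdot\|$ is the max norm $\max_i x_i$. This does not affect the conclusion, since all norms on $\zR^n$ are equivalent and therefore yield the same value of $\rho_x(A)=\limsup_{k}\|A^{k}x\|^{1/k}$, but you should say so explicitly.
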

\begin{proof}
	The result trivially holds  for $x=0$. Assume that $x\neq 0$. The formula holds for $\alpha\in \zN$ by (\ref{power_l}). 
	If $\alpha=\frac{1}{l}$, then
	\[
	\rho _{x }(A)=\rho _{x }((A^{\frac{1}{l}})^l)=\rho _{x }(A^{\frac{1}{l}})^l.
	\]
	Thus $\rho _{x }(A^{\frac{1}{l}})=\rho _{x }(A)^{\frac{1}{l}}$. If $\alpha=\frac{m}{l}$, then
	\[
	\rho _{x }(A^{\frac{m}{l}})=\rho _{x }((A^{\frac{1}{l}})^m)=\rho _{x }(A)^{\frac{m}{l}}.
	\]
	If $\alpha$ is an irrational number, then there is an increasing sequence of positive rational numbers $\alpha_k$ that converges to $\alpha$. By \cite[Theorem V.1.9]{Bhatia}, $A^{\alpha_k}$ is an increasing sequence of nonnegative matrices and so Proposition \ref{MPORP} 
	implies
	\[
	\rho _{x }(A^{\alpha})=\rho _{x }(\lim _{k\to \infty }A^{\alpha_k})=\lim _{k\to \infty }\rho _{x }(A^{\alpha_k})
	=\lim _{k\to \infty }\rho _{x }(A)^{\alpha_k}=\rho _{x }(A)^{\alpha}.
	\]
\end{proof}

%
%
%
\begin{rem} {\rm  By (\ref{local_max}) and (\ref{max_rho_x}) Theorems \ref{Schurt}, \ref{sup_inf_t_ji}, \ref{max_potence} and \ref{MTHM1}, Proposition \ref{radius1}, Lemma \ref{lemma_rhot} and Remark \ref{low_c}   also hold when $e_i$ is replaced by any $x\in \mathbb{R}^n _+ .$}
\end{rem}

We conclude this section by recalling a spectral mapping theorem for power series in max algebra and point out two versions of spectral mapping theorem for distinguished spectrum.
Let \[\mathcal{A}_+=\{f=\sum_{j=0}^\infty\alpha_jz^j: \alpha_j\ge 0, j=0,1,\dots \}.\]
For $f\in\mathcal{A}_+$, $f=\sum_{j=0}^\infty\alpha_j z^j$ write $R_f=\liminf_{j\to\infty}\alpha_j^{-1/j}$ and 
	for $0\le t<R_f$ write $f_\otimes(t)=\sup\{\alpha_jt^j:j=0,1,\dots\}$
	(note that for $t<R_f$ we have $\sup_j\alpha_jt^j<\infty$).

	Let $A\in \zR_+ ^{n\times n}$, $f=\sum_{j=0}^\infty \alpha_jz^j\in\mathcal{A}_+$, $r_{\otimes}(A)<R_f$. Define
	
	\[f_\otimes(A)=\bigoplus_{j=0}^\infty \alpha_jA^j _{\otimes}.\]
	Since $r_{\otimes}(A)=\lim_{j\to\infty}\|A^j _{\otimes}\|^{1/j}$, this definition makes sense and $f\mapsto f_\otimes(A)$ defines an analytic functional calculus for $A\in  \zR_+ ^{n\times n}$ with properties analogous to the polynomial  functional calculus (see \cite{MP15}). 
	
	The following spectral mapping theorem for max power series was established in \cite[Theorem 3.8]{MP15}.
	
	\begin{thm}\label{power}
		Let $f\in\mathcal{A}_+$ and $A\in  \zR_+ ^{n\times n}$ such that $r_{\otimes}(A)<R_f$. Then
		$r_{e_j}(f_\otimes(A))=f_\otimes(r_{e_j}(A))$ for all  $j\in \{1, \ldots, n\}$ and so $\sigma _{\otimes}(f_\otimes(A))=f_\otimes (\sigma _{\otimes}(A))$. 
		In particular, $ r_{\otimes}(f_\otimes(A))=f_\otimes (r_{\otimes}(A))$. 
	\end{thm}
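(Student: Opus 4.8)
The plan is to establish the pointwise identity $r_{e_j}\bigl(f_\otimes(A)\bigr)=f_\otimes\bigl(r_{e_j}(A)\bigr)$ for every $j\in\{1,\dots,n\}$, and then read off the assertions about $\sigma_\otimes$ and $r_\otimes$ from Theorem \ref{Spec1} together with the fact $r_\otimes(B)=\max_j r_{e_j}(B)$. Fix $j$ and write $r:=r_{e_j}(A)$. Since $r\le r_\otimes(A)<R_f$, the value $f_\otimes(r)=\sup_{l\ge 0}\alpha_l r^l$ is finite and $f_\otimes(A)=\bigoplus_{l\ge 0}\alpha_l A^l_\otimes$ is well defined entrywise, as already noted in the excerpt. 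For the lower bound I would use three elementary properties of the local spectral radius, all immediate from (\ref{local}), or from the heaviest-path description of the max powers: it is monotone for the entrywise order, it is positively homogeneous ($r_{e_j}(cB)=c\,r_{e_j}(B)$ for $c\ge 0$), and $r_{e_j}(A^l_\otimes)=r_{e_j}(A)^l$ --- the identity already used in the proof of Theorem \ref{max_potence}. Since $f_\otimes(A)\ge\alpha_l A^l_\otimes$ entrywise for each $l$, these give $r_{e_j}\bigl(f_\otimes(A)\bigr)\ge\alpha_l r^l$ for all $l$, hence $r_{e_j}\bigl(f_\otimes(A)\bigr)\ge\sup_l\alpha_l r^l=f_\otimes(r)$.

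The reverse inequality is the heart of the matter. Distributivity of $\otimes$ over countable suprema (legitimate because all entries are nonnegative and a finite maximum commutes with suprema), together with $A^{j_1}_\otimes\otimes\cdots\otimes A^{j_k}_\otimes=A^{j_1+\cdots+j_k}_\otimes$, gives
$$\bigl(f_\otimes(A)\bigr)^k_\otimes=\bigoplus_{s\ge 0}\beta^{(k)}_s\,A^s_\otimes,\qquad \beta^{(k)}_s:=\max\Bigl\{\prod_{i=1}^{k}\alpha_{j_i}\ :\ j_1+\cdots+j_k=s,\ j_i\ge 0\Bigr\},$$
and a short computation shows $\sup_{s\ge 0}\beta^{(k)}_s t^s=\bigl(\sup_{l\ge 0}\alpha_l t^l\bigr)^k=f_\otimes(t)^k$ for $0\le t<R_f$. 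Applying both sides to $e_j$ yields $\bigl\|\bigl(f_\otimes(A)\bigr)^k_\otimes\otimes e_j\bigr\|=\sup_{s\ge 0}\beta^{(k)}_s\|A^s_\otimes\otimes e_j\|$. Fix $\delta>0$ with $r+\delta<R_f$; since $\|A^s_\otimes\otimes e_j\|^{1/s}\to r$ by (\ref{local}), there is a constant $M_\delta$ with $\|A^s_\otimes\otimes e_j\|\le M_\delta(r+\delta)^s$ for every $s$, so that
$$\bigl\|\bigl(f_\otimes(A)\bigr)^k_\otimes\otimes e_j\bigr\|\le M_\delta\sup_{s\ge 0}\beta^{(k)}_s(r+\delta)^s=M_\delta\,f_\otimes(r+\delta)^k.$$
Taking $k$-th roots and letting $k\to\infty$ (the limit exists by \cite[Theorem 2.4]{MP15} applied to the matrix $f_\otimes(A)$) gives $r_{e_j}\bigl(f_\otimes(A)\bigr)\le f_\otimes(r+\delta)$, and then $\delta\downarrow 0$ finishes, once we know $f_\otimes$ is continuous on $[0,R_f)$: indeed $u\mapsto\log f_\otimes(e^u)=\sup_l(\log\alpha_l+l u)$ is a supremum of affine functions, hence convex and therefore continuous on $(-\infty,\log R_f)$, and right-continuity at $0$ follows from $\limsup_l\alpha_l^{1/l}=R_f^{-1}<\infty$.

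Finally, combining $r_{e_j}\bigl(f_\otimes(A)\bigr)=f_\otimes\bigl(r_{e_j}(A)\bigr)$ over all $j$ with Theorem \ref{Spec1} gives $\sigma_\otimes\bigl(f_\otimes(A)\bigr)=\{f_\otimes(r_{e_j}(A)):1\le j\le n\}=f_\otimes\bigl(\sigma_\otimes(A)\bigr)$, and taking the maximum over $j$, using that $f_\otimes$ is nondecreasing on $[0,R_f)$ and $r_\otimes(B)=\max_j r_{e_j}(B)$, gives $r_\otimes\bigl(f_\otimes(A)\bigr)=f_\otimes\bigl(r_\otimes(A)\bigr)$. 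I expect the genuine obstacle to be the rearrangement step --- justifying distributivity over an infinite max-series so that $\bigl(f_\otimes(A)\bigr)^k_\otimes=\bigoplus_{s}\beta^{(k)}_s A^s_\otimes$ with the stated ``max-convolution'' coefficients, verifying $\sup_{s}\beta^{(k)}_s t^s=f_\otimes(t)^k$, and handling the boundary behaviour of $f_\otimes$ as $\delta\downarrow 0$; the remaining steps are routine bookkeeping.
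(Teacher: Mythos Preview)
The paper does not supply its own proof of this theorem: it is quoted verbatim as \cite[Theorem~3.8]{MP15} and simply cited, so there is no in-paper argument to compare your proposal against.

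That said, your argument is correct and self-contained. The lower bound via monotonicity, homogeneity, and $r_{e_j}(A^l_\otimes)=r_{e_j}(A)^l$ is unproblematic. For the upper bound, the key identity $(f_\otimes(A))^k_\otimes=\bigoplus_{s\ge 0}\beta^{(k)}_s A^s_\otimes$ with the max-convolution coefficients does hold: distributivity of $\otimes$ over countable $\oplus$ is just the interchange of $\max_j$ with $\sup_l$ on nonnegative reals, and the regrouping $\sup_s\beta^{(k)}_s t^s=f_\otimes(t)^k$ is the elementary computation you indicate. The estimate $\|A^s_\otimes\otimes e_j\|\le M_\delta(r+\delta)^s$ is exactly what (\ref{local}) provides, and the passage $\delta\downarrow 0$ is justified by your convexity observation on $(0,R_f)$ together with the right-continuity of $f_\otimes$ at $0$ (which indeed follows from $R_f>0$, i.e.\ $\limsup_l\alpha_l^{1/l}<\infty$, so that $\sup_{l\ge 1}\alpha_l t^l\to 0$ as $t\downarrow 0$ and hence $f_\otimes(t)\to\alpha_0=f_\otimes(0)$). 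The deduction of the statements about $\sigma_\otimes$ and $r_\otimes$ from Theorem~\ref{Spec1} and the monotonicity of $f_\otimes$ is routine.

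Your approach is a direct Gelfand-type norm estimate carried out at the level of the local spectral radius; it avoids any appeal to the Frobenius normal form or access-relation description (Lemma~\ref{vladimir}) that one might alternatively use to identify $r_{e_j}(f_\otimes(A))$ combinatorially. The ``obstacles'' you flag at the end are not genuine obstacles: the distributivity, the max-convolution identity, and the boundary continuity are all elementary, as you essentially verify in the body of the argument.
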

	%
	By Theorem \ref{power} and (\ref{local_max})
	we  have the following result.
	
	\begin{cor}
		Let  $x\in  \zR_+ ^{n}$ be a non zero vector. If $f\in\mathcal{A}_+$ and $A\in  \zR_+ ^{n\times n}$ such that $r_{\otimes}(A)<R_f$, then $r_{x}(f_\otimes(A))=f_\otimes(r_{x}(A))$.
	\end{cor}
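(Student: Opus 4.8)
The plan is to deduce the corollary directly from the spectral mapping theorem for $r_{e_j}$ (Theorem~\ref{power}) together with the formula (\ref{local_max}) which expresses a local spectral radius $r_x$ in terms of the $r_{e_j}$ over the support of $x$. First I would note that by (\ref{local_max}) we have $r_x(f_\otimes(A)) = \max\{r_{e_j}(f_\otimes(A)): x_j \neq 0\}$, and by Theorem~\ref{power} each term on the right equals $f_\otimes(r_{e_j}(A))$. So it remains to show that $\max\{f_\otimes(r_{e_j}(A)): x_j \neq 0\} = f_\otimes(\max\{r_{e_j}(A): x_j\neq 0\}) = f_\otimes(r_x(A))$, where the last equality is again (\ref{local_max}).

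The crux is therefore the observation that $f_\otimes$ is nondecreasing on $[0,R_f)$: for $f = \sum_j \alpha_j z^j$ with all $\alpha_j \ge 0$, the function $t \mapsto f_\otimes(t) = \sup_j \alpha_j t^j$ is a supremum of nondecreasing functions of $t\ge 0$ and hence nondecreasing. Since the maximum on the right-hand side is attained at some index $j_0$ with $x_{j_0}\neq 0$ realizing $r_x(A) = r_{e_{j_0}}(A)$, monotonicity of $f_\otimes$ gives $f_\otimes(r_{e_j}(A)) \le f_\otimes(r_{e_{j_0}}(A)) = f_\otimes(r_x(A))$ for every $j$ with $x_j\neq 0$, while the reverse inequality is immediate since $j_0$ is one of the admissible indices. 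One should also check that $r_\otimes(A) < R_f$ guarantees $r_{e_j}(A) < R_f$ for each $j$ (so that $f_\otimes(r_{e_j}(A))$ is defined and finite), which follows from $r_{e_j}(A) \le r_\otimes(A)$.

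I do not expect any serious obstacle: the entire argument is a two-line chain of equalities once the monotonicity of $f_\otimes$ is recorded. The only mild subtlety is bookkeeping — ensuring the relevant quantities lie in the domain $[0,R_f)$ where $f_\otimes$ is finite — but this is handled by the hypothesis $r_\otimes(A) < R_f$ and the inequality $r_{e_j}(A)\le r_\otimes(A)$ noted above. A clean write-up would simply chain $r_x(f_\otimes(A)) = \max_j r_{e_j}(f_\otimes(A)) = \max_j f_\otimes(r_{e_j}(A)) = f_\otimes(\max_j r_{e_j}(A)) = f_\otimes(r_x(A))$, with the third equality justified by monotonicity of $f_\otimes$ and the remaining ones by Theorem~\ref{power} and (\ref{local_max}).
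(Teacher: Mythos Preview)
Your proposal is correct and takes essentially the same approach as the paper, which simply states that the corollary follows from Theorem~\ref{power} and (\ref{local_max}) without further detail. Your write-up in fact makes explicit the monotonicity of $f_\otimes$ needed to commute the maximum with $f_\otimes$, a point the paper leaves implicit.
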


Next we prove a version of the spectral mapping theorem for power series 
for the distinguished part of the spectrum. 
It follows from \cite[Theorem 3.5 and remarks in Section 7]{Schneider}. Its proof is similar to the proof of \cite[Theorem 3.6]{Schneider}.

\begin{thm} Let $f=\sum_{j=0}^\infty\alpha_jz^j$ be real power series and $A\in  \zR_+ ^{n\times n}$ such that $\rho(A)<R_f=\liminf_{j\to\infty}|\alpha_j |^{-1/j}$ and $f(A)\in  \zR_+ ^{n\times n}$. 
			Then $\sigma _D(f(A))=f (\sigma _D(A))$.
			\label{gen_matr_sp_map}			
\end{thm}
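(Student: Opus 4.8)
The plan is to reduce the distinguished-spectrum spectral mapping theorem to the results already available for polynomials and for the max-algebra power series, using the asymptotic bridge between the two settings developed in Section~2. First I would fix $A\in\zR_+^{n\times n}$ with $\rho(A)<R_f$ and $f(A)\in\zR_+^{n\times n}$, and reduce to the componentwise statement $\rho_{e_i}(f(A))=f(\rho_{e_i}(A))$ for each $i\in\{1,\dots,n\}$; by \eqref{dist_eig} this is equivalent to $\sigma_D(f(A))=f(\sigma_D(A))$, since the latter is the union of the former over $i$ (here one must be a little careful, as in Theorem~\ref{power}, that $f$ is monotone increasing on $[0,\rho(A)]$ because $\alpha_j\ge 0$, so that applying $f$ to the set of distinguished eigenvalues is unambiguous). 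The natural route is to approximate $f$ by its partial sums $p_N(z)=\sum_{j=0}^{N}\alpha_j z^j$: since $f(A)\ge 0$ and the tail is entrywise nonnegative, we have $p_N(A)\le f(A)$ and $p_N(A)\to f(A)$ entrywise as $N\to\infty$, with the sequence $(p_N(A))_N$ increasing in $N$.

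The second step is to handle the polynomial case $\rho_{e_i}(p_N(A))=p_N(\rho_{e_i}(A))$. This is exactly the single-matrix ($m=1$) instance of Theorem~\ref{powers_nonneg}, but one must check the hypotheses: taking $A_1=A$, the matrix $p_N(A_1)$ is a polynomial in $A$ and commutes with $A$, and it is nonnegative because $p_N(A)\le f(A)$; irreducibility need not hold, but the key point is that in the single-matrix case the access-relation description (Theorem~\ref{access_dist}) combined with $\rho_{e_i}(A^m)=\rho_{e_i}(A)^m$ from \eqref{powers_rho_i} lets one evaluate $\rho_{e_i}$ on any polynomial in $A$ directly. Concretely, I would argue that every premier spectral class of $p_N(A)$ having access to $i$ is contained in a premier spectral class of $A$ having access to $i$ with the Perron root transformed by $p_N$ (monotonically, since $\alpha_j\ge 0$), and conversely; this is the analogue, for a single matrix and a fixed polynomial, of the Frobenius-normal-form bookkeeping already used in the proof of \eqref{powers_rho_i} via \cite[Theorem 5.4 and Lemma 5.3]{BSST13}. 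Alternatively, and more cleanly, one invokes Theorem~\ref{dist_power}(2): every $\lambda\in\sigma_D(p_N(A))$ is of the form $p_N(\mu)$ with $\mu\in\sigma_D(A)$, and part~(1) with the joint distinguished eigenvector of $A$ and $p_N(A)$ (from \cite[Theorem 3.5]{Schneider}) supplies the reverse inclusion at the level of $e_i$, exactly as in the proof of Theorem~\ref{powers}, Case~(ii).

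The third step is the passage to the limit $N\to\infty$. Since $(p_N(A))_N$ is an increasing sequence of nonnegative matrices with $\|p_N(A)-f(A)\|\to 0$ (the latter from $\rho(A)<R_f$, which forces absolute convergence of the series entrywise), Proposition~\ref{MPORP} applies and gives $\rho_{e_i}(p_N(A))\to\rho_{e_i}(f(A))$ as $N\to\infty$. On the other hand $p_N(\rho_{e_i}(A))\to f(\rho_{e_i}(A))$ because $\rho_{e_i}(A)\le\rho(A)<R_f$ so the numerical series converges. Combining with the polynomial identity $\rho_{e_i}(p_N(A))=p_N(\rho_{e_i}(A))$ from the second step yields $\rho_{e_i}(f(A))=f(\rho_{e_i}(A))$, and then \eqref{dist_eig} finishes the proof.

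The main obstacle I anticipate is the polynomial step, specifically making sure the hypotheses of Theorem~\ref{powers_nonneg} or Theorem~\ref{dist_power} are genuinely met without an irreducibility or distinct-Perron-root assumption on $A$: a general nonnegative $A$ can have several spectral classes with coinciding Perron roots, so one cannot directly quote Theorem~\ref{powers_nonneg}(ii), and Theorem~\ref{powers_nonneg}(i) also may fail. The cleanest fix, which I expect the authors to use (they flag it: ``It follows from \cite[Theorem 3.5 and remarks in Section 7]{Schneider}''), is to avoid Theorem~\ref{powers_nonneg} entirely and argue directly with the joint distinguished eigenvector from \cite[Theorem 3.5]{Schneider} together with the access-relation characterization of $\rho_{e_i}$ (Theorem~\ref{access_dist}) and the power formula \eqref{powers_rho_i}: a distinguished eigenvector $v$ of $f(A)$ for the eigenvalue $\rho_{e_i}(f(A))$ can be chosen to be a common distinguished eigenvector of $A$ and all $p_N(A)$, its support is an initial segment generated by a spectral class $\mu$ of $A$, and on that segment $A v=\rho_{e_k}(A)v$ for any $k\in\mu$, whence $f(A)v=f(\rho_{e_k}(A))v$; combined with Proposition~\ref{MPORP} for the reverse direction this pins down the value. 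A secondary, more routine, point to be careful about is the monotonicity of $f$ needed to interpret $f(\sigma_D(A))$ and to pass inequalities through $f$ — this is immediate from $\alpha_j\ge 0$ but should be stated.
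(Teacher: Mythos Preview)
Your approach contains a genuine gap: throughout you assume $\alpha_j\ge 0$ (you use this for the monotonicity of $f$, for the inequality $p_N(A)\le f(A)$, for $(p_N(A))_N$ being increasing, and hence for invoking Proposition~\ref{MPORP}), but Theorem~\ref{gen_matr_sp_map} is stated for a \emph{real} power series with the sole constraint $f(A)\in\zR_+^{n\times n}$. Nothing in the hypothesis forces $\alpha_j\ge 0$, so your entire limiting scheme via partial sums and Proposition~\ref{MPORP} is unavailable in the generality claimed. Even restricting to $f\in\mathcal{A}_+$, your polynomial step $\rho_{e_i}(p_N(A))=p_N(\rho_{e_i}(A))$ is not delivered by Theorem~\ref{powers_nonneg}: neither condition~(i) nor~(ii) there is assumed, and that theorem only produces some $k$ (possibly $\neq i$) with $\rho_{e_i}(p(A))=p(\rho_{e_k}(A))$. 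You notice this obstacle, but your proposed fix still routes through $p_N$ and Proposition~\ref{MPORP}.

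The paper's proof bypasses all of this and is essentially two lines. For $f(\sigma_D(A))\subset\sigma_D(f(A))$: if $Ax=\lambda x$ with $x\ge 0$, $x\neq 0$, then $f(A)x=f(\lambda)x$. For the reverse inclusion: given $\lambda\in\sigma_D(f(A))$, since $A$ and $f(A)$ commute, \cite[Theorem~3.5]{Schneider} yields a common nonnegative eigenvector $x\neq 0$ with $f(A)x=\lambda x$ and $Ax=\beta x$ for some $\beta\in\sigma_D(A)$; then $\lambda x=f(A)x=f(\beta)x$ forces $\lambda=f(\beta)\in f(\sigma_D(A))$. No partial sums, no monotone limits, no sign assumption on the $\alpha_j$, and the argument works at the level of the \emph{set} $\sigma_D$ rather than the stronger pointwise identity $\rho_{e_i}(f(A))=f(\rho_{e_i}(A))$ you target (which, incidentally, is sufficient but not equivalent to the set equality, contrary to what you write). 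The ``fix'' you sketch at the end is in fact this argument, but you should apply it directly to the pair $(A,f(A))$ rather than as a patch to the polynomial step.
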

\begin{proof} First we prove that $f (\sigma _D(A)) \subset \sigma _D(f(A))$. If $\lambda \in \sigma _D(A)$, there exists  $x\in \zR_+ ^{n}$, $x\neq 0$ such that $Ax=\lambda x$. It follows that $f(A)x= \sum_{j=0}^\infty\alpha_j A^j x = \sum_{j=0}^\infty\alpha_j \lambda ^j x = f(\lambda) x$ and so $f(\lambda) \in \sigma _D(f(A))$.

To prove the reverse inclusion let $\lambda \in \sigma _D(f(A))$. Since $A$ and $f(A)$ commute it follows by  \cite[Theorem 3.5]{Schneider} that there exist $x\in \zR_+ ^{n}$, $x\neq 0$ and $\beta \in \sigma _D(A)$ such that $f(A)x=\lambda x$ and $Ax=\beta x$. Since $x\neq 0$ and
$$\lambda x= f(A)x= \sum_{j=0}^\infty\alpha_j \beta ^j x=f(\beta) x$$
it follows that $\lambda = f(\beta) \in f (\sigma _D(A)) $, which completes the proof.
\end{proof}

	\begin{rem}{\rm Theorem \ref{gen_matr_sp_map} can be applied e.g. to $\exp (A)$, $\sinh(A)$, $\cosh(A)$ and to resolvent $R(\lambda, A)$ for $\lambda > \rho(A)$.
		}
	\end{rem} 
	\begin{ex}  Let $A=\left[\begin{array}{cc} 0 & \frac{1}{3} \\\frac{1}{3} & 0 \\ 
	\end{array}\right]$. Then $A^{2n}=\left[\begin{array}{cc}  \frac{1}{3^{2n}} &0 \\0 & \frac{1}{3^{2n}} \\ 
	\end{array}\right]$ and \\
	$A^{2n+1}=\left[\begin{array}{cc} 0 & \frac{1}{3^{2n+1}} \\\frac{1}{3^{2n+1}} & 0 \\ 
	\end{array}\right]$ for $n=1,2,\cdots$. Therefore $\|A^ne_1\|=\|A^ne_2\|=\frac{1}{3^n}$ for all $n\ge 1$ which implies $\rho_{e_1}(A)=\rho_{e_2}(A)=\frac{1}{3}$. By Lemma \ref{max_rho_x}
$\rho_{x}(A)=\frac{1}{3}$  for all  $x\in  \zR_+ ^{n}$, $x\neq 0$.

Since $exp(x)=\sum_{n=0}^{\infty}\frac{t^n}{n!}$, $\cosh(x)=\sum_{n=0}^{\infty}\frac{t^{2n}}{(2n)!}$ and $\sinh(t)=\sum_{n=0}^{\infty}\frac{t^{2n+1}}{(2n+1)!}$, Theorem \ref{gen_matr_sp_map} implies that 
$\rho_{x}(exp(A))=exp(\frac{1}{3}), \rho_{x}(\cosh(A))=\cosh(\frac{1}{3})$, and $\rho_{x}(\sinh(A))=\sinh(\frac{1}{3})$.
In fact  for all $f\in\mathcal{A}_+$ such that $\frac{1}{3}<R_f$, we have $\rho_{x}(f(A))=f(\frac{1}{3})$ for all  $x\in  \zR_+ ^{n}$, $x\neq 0$. Observe that 
 since $A$ is a self adjoint matrix, there is a unitary matrix $U$ such that $A=U^*DU$, where $D=\left[\begin{array}{cc} \frac{1}{3} & 0 \\0 & \frac{-1}{3} \\ 
	\end{array}\right]$ and so $f(A)=U^*\left[\begin{array}{cc} f(\frac{1}{3}) & 0 \\0 & f(\frac{-1}{3}) \\ 
	\end{array}\right]U$.
\end{ex}

\section{New inequalities on Hadamard products}

In this section we prove new inequalities on eigenvalues in max algebra and on distinguished classical eigenvalues for Hadamard products and Hadamard weighted geometric means of nonnegative matrices.

The following result was obtained in \cite[Corollary 4.8]{P12} and \cite[Theorem 5.4]{RLP19} (see also \cite{MP18}).

\begin{thm}\label{power1}
	Let $A_1, \ldots, A_m, A, B$ be $n \times n$ non-negative matrices and  let \\
	$P_i =A_i \otimes  A_{i+1} \otimes \cdots  \otimes A_m \otimes A_1 \otimes \cdots \otimes A_{i-1}$ 
	for $i=1, \ldots, m$. Then the following inequalities hold:
	\begin{equation}\label{Humu}
		r_{\otimes} (A _1  \circ \cdots \circ A _m) \le  r _{\otimes} (P_1  \circ \cdots \circ P_m)^{1/m}  \le r_{\otimes}(A_1 \otimes  \cdots \otimes A_m),
	\end{equation}
	\begin{equation}\label{maxmixmax}
		r_{\otimes} (A \circ B) \le r_{\otimes} ((A \otimes B) \circ (B \otimes A))^{1/2}\le  r_{\otimes} (A \otimes B),
	\end{equation}
	\begin{equation}\label{kvmax}
		r_{\otimes} (A\circ B) \le \|A\circ B\|\le r_{\otimes} (A^T \otimes B),
	\end{equation}
	\begin{equation}\label{Hadamard}
		r_{\otimes} (A \otimes B \circ B \otimes A) \le r_{\otimes} (A ^2 _{\otimes} \otimes B ^2 _{\otimes}).
	\end{equation}
\end{thm}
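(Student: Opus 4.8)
The plan is to reduce all four inequalities to one entrywise comparison together with a few elementary properties of the maximum cycle geometric mean, all immediate from the cycle definition of $r_{\otimes}$ and the Gelfand formula $r_{\otimes}(X)=\lim_{j\to\infty}\|X^j _{\otimes}\|^{1/j}$ recalled above: (a) $X\le Y$ entrywise implies $r_{\otimes}(X)\le r_{\otimes}(Y)$; (b) $r_{\otimes}(X^m _{\otimes})=r_{\otimes}(X)^m$; (c) $r_{\otimes}(X\otimes Y)=r_{\otimes}(Y\otimes X)$ (rearrange the factors cyclically along an optimal cycle), hence $r_{\otimes}(P_i)=r_{\otimes}(A_1\otimes\cdots\otimes A_m)$ for every $i$; (d) a max-algebra Elsner-type bound $r_{\otimes}(C_1\circ\cdots\circ C_m)\le\prod_{j=1}^m r_{\otimes}(C_j)$, since along any cycle the weight in $C_1\circ\cdots\circ C_m$ is the product over $j$ of the weights of the same cycle in $C_j$, each with $k$-th root at most $r_{\otimes}(C_j)$; and (e) $C_{jj}\le r_{\otimes}(C)\le\|C\|$ (a loop at $j$ is a cycle, and any cycle mean is a geometric mean of entries).

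The crux is the entrywise inequality $(A_1\circ\cdots\circ A_m)^m _{\otimes}\le P_1\circ\cdots\circ P_m$. To prove it, fix $i,j$ and take indices $i=k_0,k_1,\dots,k_{m-1},k_m=j$ realizing the $(i,j)$ entry of the left-hand side, which then equals $\prod_{l=1}^m\prod_{p=1}^m (A_p)_{k_{l-1}k_l}$. For each $i'\in\{1,\dots,m\}$ I bound $(P_{i'})_{ij}$ from below by evaluating the maximum that defines it on the single path $k_0\to k_1\to\cdots\to k_m$, placing the $l$-th matrix factor of $P_{i'}$ on the edge $k_{l-1}\to k_l$. Multiplying these $m$ lower bounds, the edge $k_{l-1}\to k_l$ collects precisely $\prod_{p=1}^m (A_p)_{k_{l-1}k_l}$, because for each fixed $l$ the $l$-th factors of $P_1,\dots,P_m$ are $A_1,\dots,A_m$ in some order; hence $(P_1\circ\cdots\circ P_m)_{ij}$ dominates the target entry.

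Granting this, (\ref{Humu}) follows at once: the entrywise inequality with (a) and (b) gives $r_{\otimes}(A_1\circ\cdots\circ A_m)^m\le r_{\otimes}(P_1\circ\cdots\circ P_m)$, which is the first inequality; and (d) with (c) gives $r_{\otimes}(P_1\circ\cdots\circ P_m)\le\prod_i r_{\otimes}(P_i)=r_{\otimes}(A_1\otimes\cdots\otimes A_m)^m$, the second. Inequality (\ref{maxmixmax}) is then the case $m=2$, $A_1=A$, $A_2=B$, so that $P_1=A\otimes B$ and $P_2=B\otimes A$; and (\ref{kvmax}) follows from $r_{\otimes}(A\circ B)\le\|A\circ B\|$ in (e) together with $\|A\circ B\|=\max_{k,j}a_{kj}b_{kj}=\max_j(A^T\otimes B)_{jj}\le r_{\otimes}(A^T\otimes B)$, again by (e).

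For (\ref{Hadamard}) I would work directly with cycles. Choose a cycle $v_1\to\cdots\to v_k\to v_1$ realizing $r_{\otimes}((A\otimes B)\circ(B\otimes A))$ and, for each $s$, indices $p_s$ with $(A\otimes B)_{v_s v_{s+1}}=a_{v_s p_s}b_{p_s v_{s+1}}$ and $q_s$ with $(B\otimes A)_{v_s v_{s+1}}=b_{v_s q_s}a_{q_s v_{s+1}}$, so the cycle weight equals $W=\prod_s a_{v_s p_s}b_{p_s v_{s+1}}b_{v_s q_s}a_{q_s v_{s+1}}$. A cyclic reindexing of the factors shows $W=\prod_s (a_{q_{s-1}v_s}a_{v_s p_s})(b_{p_s v_{s+1}}b_{v_{s+1}q_{s+1}})\le\prod_s (A^2 _{\otimes})_{q_{s-1}p_s}(B^2 _{\otimes})_{p_s q_{s+1}}\le\prod_s (A^2 _{\otimes}\otimes B^2 _{\otimes})_{q_{s-1}q_{s+1}}$; the edges $q_{s-1}\to q_{s+1}$ (for $s=1,\dots,k$, indices read cyclically) decompose into disjoint cycles of $A^2 _{\otimes}\otimes B^2 _{\otimes}$ of total length $k$, so this last product is at most $r_{\otimes}(A^2 _{\otimes}\otimes B^2 _{\otimes})^k$, and taking $k$-th roots yields (\ref{Hadamard}). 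I expect the main obstacle to be the combinatorial bookkeeping in the entrywise lemma — verifying that the $m$ paths, taken simultaneously through the common vertices $k_0,\dots,k_m$, multiply to exactly the target entry — and, to a lesser extent, the cyclic reindexing in (\ref{Hadamard}), where one must check that the shifted $A$- and $B$-edges reassemble into genuine cycles of $A^2 _{\otimes}\otimes B^2 _{\otimes}$.
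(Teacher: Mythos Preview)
Your argument is correct. The paper does not supply its own proof of this theorem; it quotes the result from \cite[Corollary~4.8]{P12} and \cite[Theorem~5.4]{RLP19}. Your route---the entrywise comparison $(A_1\circ\cdots\circ A_m)^m_{\otimes}\le P_1\circ\cdots\circ P_m$ combined with monotonicity, the power rule $r_{\otimes}(X^m_{\otimes})=r_{\otimes}(X)^m$, cyclic invariance, and the Elsner-type bound---is exactly the mechanism behind those references and is the same device the paper itself uses later (Theorem~\ref{radius4}) to prove the $r_{e_i}$ analogue, where the entrywise step is the cited inequality~(\ref{ineq}). Your bookkeeping for that lemma is fine: for fixed $l$ the $l$-th factors of $P_1,\dots,P_m$ are $A_{l},A_{l+1},\dots,A_{l-1}$, a cyclic shift and hence a permutation of $A_1,\dots,A_m$, so the products match.

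For (\ref{Hadamard}) your cycle argument also goes through. One wording issue: the edges $q_{s-1}\to q_{s+1}$ form one closed walk of length $k$ when $k$ is odd and two closed walks of length $k/2$ when $k$ is even, and in either case the $q_s$ need not be distinct; so rather than ``disjoint cycles'' you should say the resulting closed walk(s) decompose into simple cycles whose lengths sum to $k$, which is enough to bound the product by $r_{\otimes}(A^2_{\otimes}\otimes B^2_{\otimes})^k$. With that phrasing fixed, the argument is complete.
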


\begin{rem}{\rm
		\label{radius}
		If $r_{\otimes} (A \otimes B) \le a_{ii}b_{ii}$ for some $i\in \{1,\dots,n\}$ then $r_{\otimes} (A \otimes B)=r_{\otimes} (A\circ B)=a_{ii}b_{ii}.$

		Indeed, by applying (\ref{maxmixmax}) we have 
		$$ a_{ii}b_{ii}\leq r_{\otimes} (A\circ B)\leq r_{\otimes} (A \otimes B) \le a_{ii}b_{ii}$$
		and this proves the statement.}
\end{rem}
Since $r_{\otimes}(A) = \max _i r_{e_i} (A)$, 
it is reasonable to ask whether the analogue of Theorem \ref{power1} holds for each $r_{e_i}(A)$. It turns out that this is not the case as we will see below.

We will need the following result, which is already known in the case when $\alpha _j >0$ for $j=1, \ldots ,m$ and $s_m = \sum _{j= 1} ^m \alpha _j = 1$  \cite[Theorem 3.4]{P06}.
Note however, that Proposition \ref{radius2} is not a special case of  \cite[Theorem 4.3]{P06}  (not even under the assumption $s_m \ge 1$), since $\|E_{kj} \otimes e_i\|$ equals $1$ when $i=j$ and it equals $0$ if $i \neq j$ 
(here $E_{kj}$ denotes the matrix with $1$ at the $kj$-th entry and $0$ elsewhere) and so the assumptions of  \cite[Theorem 4.3]{P06} are not satisfied.

\begin{prop}\label{radius2} Let  $A_1, \cdots, A_m \in  \zR_+^{n\times n}$ and let $\alpha _j >0$ for $j=1, \ldots ,m$. %
	Then
	\[r_{e_i} (A_1 ^{(\alpha _1)} \circ \cdots \circ A_m ^{(\alpha _m)}) \le r_{e_i} (A_1)^{\alpha _1}   \cdots r_{e_i} (A_m)^{\alpha _m} \]
	for all $i=1, \ldots , n$.
\end{prop}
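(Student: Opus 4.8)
The plan is to use the characterization of $r_{e_i}$ via local spectral radius together with the submultiplicativity relation \eqref{t_dobro} (extended to max-algebra products, cf. Lemma \ref{Lem1}), reducing the claim to a statement about the growth rates of entries of powers. First I would reduce to the case where all the $\alpha_j$ are rational with a common denominator $N$, say $\alpha_j = p_j/N$; the general case then follows by a continuity/monotonicity argument, using that $t\mapsto r_{e_i}(A^{(t)})^{1/t} = r_{e_i}(A)$ is in fact constant in $t$ by \eqref{Sch_t}, so the Hadamard-power rescaling is harmless. Actually, since \eqref{Sch_t} gives $r_{e_i}(A^{(\alpha)}) = r_{e_i}(A)^\alpha$ exactly, it suffices to prove the inequality when all $\alpha_j = 1$, i.e.
$$r_{e_i}(A_1 \circ \cdots \circ A_m) \le r_{e_i}(A_1)\cdots r_{e_i}(A_m),$$
and then substitute $A_j \leadsto A_j^{(\alpha_j)}$ and apply \eqref{Sch_t} on both sides.

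For the case $\alpha_j=1$, the key step is the elementary entrywise bound: for any nonnegative matrices and any $k$,
$$\bigl[(A_1\circ\cdots\circ A_m)^k\bigr]_{\ell j} \le \bigl[(A_1)^k_{\otimes}\bigr]_{\ell j}\cdots \bigl[(A_m)^k_{\otimes}\bigr]_{\ell j},$$
which one verifies by expanding both sides as sums/maxima over length-$k$ paths from $j$ to $\ell$ and noting that the left side, along each fixed path, is the product over edges of $\prod_s a^{(s)}_{\cdot\cdot}$, hence bounded by the product over $s$ of the path-weight in $A_s$, which in turn is bounded by the heaviest such path, i.e.\ the corresponding entry of $(A_s)^k_{\otimes}$. (Here I am using the standard-product power on the left because $A\circ B$ interacts with ordinary matrix powers; the max-power entries dominate because each term in the ordinary sum is dominated, term by path, by the max over paths.) Taking the $i$-th column, then $\|\cdot\|$, then $k$-th roots and $k\to\infty$, and invoking $r_{e_i}(A) = \lim_k \|(A)^k_{\otimes}\otimes e_i\|^{1/k}$ (which follows from \eqref{local} once one checks the max-power and ordinary-power local radii agree in the limit, as in the proof of Theorem \ref{Schurt}) yields the desired inequality.

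The main obstacle I anticipate is making the path-counting bound fully rigorous when the Hadamard product is combined with \emph{ordinary} (not max) matrix powers: one must be careful that $\|(A_1\circ\cdots\circ A_m)^k e_i\|$ — a maximum over entries of an ordinary power, hence a sum over paths — is correctly dominated by the product of max-power entries, and that the polynomial-in-$n$ overcounting factors wash out after taking $k$-th roots. The cleanest route is probably to avoid ordinary powers entirely on the left: observe $\|(A_1\circ\cdots\circ A_m)^k e_i\| \le n^{k-1}\|(A_1\circ\cdots\circ A_m)^k_{\otimes}\otimes e_i\|$ and then use Lemma \ref{Lem1}-style identities together with $(A_1\circ\cdots\circ A_m)^k_{\otimes}$ being entrywise $\le (A_1)^k_{\otimes}\circ\cdots\circ(A_m)^k_{\otimes}$ (a one-line path comparison), reducing to $\|(A_1)^k_{\otimes}\otimes e_i\|\cdots\|(A_m)^k_{\otimes}\otimes e_i\|$ on the $i$-th column; the factor $n^{k-1}$ contributes $n^{(k-1)/k}\to n$ but is then removed by the $\inf$/$\sup$ gymnastics exactly as in Theorem \ref{max_potence} — or, more directly, one notes the inequality is scale-invariant enough that the polynomial factor simply disappears in the limit. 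I would also remark that strict positivity $\alpha_j>0$ is used only to ensure $A_j^{(\alpha_j)}$ is well-defined and \eqref{Sch_t} applies; no normalization $\sum\alpha_j=1$ is needed, which is precisely the improvement over \cite[Theorem 3.4]{P06}.
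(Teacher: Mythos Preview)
Your approach is correct but takes a different route from the paper. The paper's proof is a two-line reduction: the case $\sum_j \alpha_j = 1$ is quoted directly from \cite[Theorem 3.4]{P06}, and the general case follows by writing $\alpha_j = \beta_j s_m$ with $\sum_j \beta_j = 1$ and applying \eqref{Sch_t} in the form $r_{e_i}((\,\cdot\,)^{(s_m)}) = r_{e_i}(\,\cdot\,)^{s_m}$. You instead use \eqref{Sch_t} to absorb the exponents into the matrices (reducing to all $\alpha_j = 1$) and then give a self-contained path-comparison argument for Corollary~\ref{radius3}. Your key entrywise bound $(A_1\circ\cdots\circ A_m)^k_{\otimes} \le (A_1)^k_{\otimes}\circ\cdots\circ (A_m)^k_{\otimes}$ is precisely the special case $A_{r\ell}=A_\ell$ of inequality \eqref{ineq}, which the paper invokes only later for Theorem~\ref{radius4}. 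What your approach buys is self-containment (no appeal to \cite{P06}); what the paper's buys is brevity and an explicit link to the weighted-geometric-mean literature.

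One cosmetic point: your detour through ordinary matrix powers and the $n^{k-1}$ factor is entirely unnecessary. By definition \eqref{local}, $r_{e_i}(A)=\lim_k \|A^k_{\otimes}\otimes e_i\|^{1/k}$ is already computed with max powers, so you can stay in max algebra throughout: from $(A_1\circ\cdots\circ A_m)^k_{\otimes} \le (A_1)^k_{\otimes}\circ\cdots\circ (A_m)^k_{\otimes}$ apply $\otimes\, e_i$, take $\|\cdot\|$, use $\max_\ell \prod_s c^{(s)}_\ell \le \prod_s \max_\ell c^{(s)}_\ell$, take $k$-th roots and let $k\to\infty$. No polynomial prefactors ever appear.
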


\begin{proof} {\it Case 1.} The case $s_m =\sum _{j = 1} ^m \alpha _j   =1$ is a special case of \cite[Theorem 3.4]{P06}.
	
\noindent {\it Case 2.}	In general denote $s_m=\sum _{j = 1} ^m \alpha _j  >0$ and  define $\beta _j = \frac{\alpha _j}{s_m}$.  Since $\alpha _j = \beta _j s_m$, we have by Case 1
	\begin{equation}
		\begin{aligned}
			r_{e_i} (A_1 ^{(\alpha _1)} \circ \cdots \circ A_m ^{(\alpha _m)})
			&= r_{e_i} ((A_1 ^{(\beta _1)} \circ \cdots \circ A_m ^{(\beta _m)})^{(s_m)}) \\
			&= r_{e_i} (A_1 ^{(\beta _1)} \circ \cdots \circ A_m ^{(\beta _m)})^{s_m} \\
			&\le   (r_{e_i} (A_1)^{\beta _1 }   \cdots r_{e_i} (A_m)^{\beta _m })^{s_m} \;\; (\mathrm{since} \;\; \sum _{j= 1} ^m \beta _j = 1)\\
			&= r_{e_i} (A_1)^{\alpha _1}   \cdots r_{e_i} (A_m)^{\alpha _m}, 
		\end{aligned}
	\end{equation}
	which completes the proof.
\end{proof}

\begin{cor}\label{radius3} Let  $A_1, \cdots, A_m \in  \zR_+^{n\times n}$. Then 
	\[r_{e_i} (A_1 \circ \cdots \circ A_m ) \le r_{e_i} (A_1)  \cdots r_{e_i} (A_m)  \]
	for all $i=1, \ldots , n$.
\end{cor}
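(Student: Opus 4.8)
The plan is to derive Corollary \ref{radius3} as the special case of Proposition \ref{radius2} in which every exponent $\alpha_j$ equals $1$. Since Proposition \ref{radius2} is stated for arbitrary positive $\alpha_j$ (with no normalization assumption on $s_m = \sum_{j=1}^m \alpha_j$), there is nothing further to prove: setting $\alpha_1 = \cdots = \alpha_m = 1$ gives $A_j^{(\alpha_j)} = A_j^{(1)} = A_j$ for each $j$, so the left-hand side becomes $r_{e_i}(A_1 \circ \cdots \circ A_m)$, and the right-hand side becomes $r_{e_i}(A_1)^1 \cdots r_{e_i}(A_m)^1 = r_{e_i}(A_1) \cdots r_{e_i}(A_m)$.

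Concretely, I would write: apply Proposition \ref{radius2} with $\alpha_j = 1$ for all $j = 1, \ldots, m$ (which are indeed positive), and observe that $B^{(1)} = B$ for any $B \in \zR_+^{n\times n}$. The claimed inequality
$$r_{e_i}(A_1 \circ \cdots \circ A_m) \le r_{e_i}(A_1) \cdots r_{e_i}(A_m)$$
then follows immediately for every $i = 1, \ldots, n$.

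There is essentially no obstacle here; the corollary is a direct specialization. The only thing worth a word of care is confirming that Proposition \ref{radius2} genuinely permits $s_m = m \neq 1$ — but its proof explicitly handles the general case $s_m > 0$ via the rescaling $\beta_j = \alpha_j / s_m$ together with the Hadamard-power identity $r_{e_i}(C^{(s)}) = r_{e_i}(C)^s$ from Proposition \ref{radius1}, so no normalization hypothesis is in force. Hence the one-line deduction is fully justified.
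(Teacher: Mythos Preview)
Your proposal is correct and is exactly the approach the paper intends: Corollary \ref{radius3} is stated immediately after Proposition \ref{radius2} with no separate proof, so the paper implicitly treats it as the specialization $\alpha_1=\cdots=\alpha_m=1$. Your remark that Proposition \ref{radius2} covers the general case $s_m>0$ (not just $s_m=1$) is on point and confirms the deduction is legitimate.
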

It is well known that $r_{\otimes} (A\otimes B) = r_{\otimes} (B\otimes A)  $. However, the 
following example shows that the equality $ r_{e_i} (A\otimes B) = r_{e_i} (B\otimes A)$ (and also the inequality $ r_{e_i} (A\otimes B)\le  r_{e_i} (A)  r_{e_i} (B)  $)  is not correct in general.

\begin{ex}\label{exam8}
	Let $A=\left[\begin{array}{cc} 1 & 0 \\0 & 0 \\ 
	\end{array}\right]$ and $B=\left[\begin{array}{cc} 1 & 2 \\3 & 4 \\ 
	\end{array}\right]$. Then $r_{e_1} (A) =1$,  $r_{e_2} (A) =0$ and
	$A\otimes B=\left[\begin{array}{cc} 1 & 2 \\0 & 0 \\ 
	\end{array}\right]$ and $B\otimes A=\left[\begin{array}{cc} 1 & 0 \\3 & 0 \\ 
	\end{array}\right]$. So $r_{e_1} (A\otimes B)= r_{e_2} (A\otimes B) = r_{e_1} (B\otimes A)=1$ and $r_{e_2} (B\otimes A)=0$. Therefore 
	$r_{e_2} (A\otimes B)>  r_{e_2} (B\otimes A)$ and $r_{e_2} (A\otimes B) > r_{e_2} (A) r_{e_2} (B)$.
\end{ex}


\begin{rem} {\rm
		The analogue of Remark \ref{radius} does not hold for arbitrary $r_{e_j}$. More precisely,
		it may happen that $r_{e_j}(A\otimes B)=a_{ii}b_{ii} $ for some $i\in \{1,\dots,n\}$ but
		$$r_{e_j}(A\otimes B)\neq r_{e_j}(A\circ B). $$
		For instance, if $A$ and $B$ are matrices  from Example \ref{exam8} we have 
		$r_{e_2} (A\otimes B)=1=a_{11}b_{11}$, but $A\circ B=
		A=\left[\begin{array}{cc} 1 & 0 \\0 & 0 \\ 
		\end{array}\right]$ 
		and $ r_{e_2} (A\circ B)=0 $. }
\end{rem}


Let $A\in \zR_+^{n\times n} $  and $k \in \mathbb{N}$. It is known that $r_{e_i} (A^k _{\otimes}) = r_{e_i} (A)^k $  for all $i=1, \ldots , n$ (see e.g. Theorem \ref{power} below).  It is also known that 
\begin{equation}\label{ineq}
	(A_{11} \circ \cdots \circ A_{1m}) \otimes \cdots \otimes (A_{k1} \circ \cdots  \circ  A_{km})
	\le (A_{11} \otimes \cdots \otimes  A_{k1}) \circ \cdots \circ  (A_{1m} \otimes \cdots \otimes  A_{km})
\end{equation}
for $A_{jl} \in \zR_+^{n\times n}$, $j=1, \ldots, k$, $l=1, \ldots ,m$ (see e.g. \cite{P06}). A similar technique as the one used in \cite{P18b} gives the following result.

\begin{thm}\label{radius4} Let $A_1, \ldots, A_m \in \zR_+^{n\times n}$  and  let 
	$P_j =A_j  \otimes A_{j+1} \otimes  \cdots  A_m \otimes  A_1 \otimes  \cdots  \otimes A_{j-1}$ for $j=1, \ldots, m$. Then
	\[r_{e_i} (A_1 \circ \cdots \circ A_m ) \le r_{e_i} ^{1/m} (P_1 \circ \cdots \circ P_m )\le (r_{e_i} (P_1) \cdots r_{e_i} (P_m))^{1/m} \]
	for every  $1\le i \le n$.
\end{thm}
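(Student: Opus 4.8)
The plan is to prove the two inequalities separately. The right-hand inequality is essentially immediate: applying Corollary \ref{radius3} to the matrices $P_1, \ldots, P_m$ gives
\[
r_{e_i}(P_1 \circ \cdots \circ P_m) \le r_{e_i}(P_1)\cdots r_{e_i}(P_m),
\]
and taking $m$-th roots yields $r_{e_i}^{1/m}(P_1 \circ \cdots \circ P_m) \le (r_{e_i}(P_1)\cdots r_{e_i}(P_m))^{1/m}$.

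For the left-hand inequality I would follow the technique of \cite{P18b}, reducing to the monotonicity of $r_{e_i}$ together with the max-algebra inequality (\ref{ineq}). Write $C = A_1 \circ \cdots \circ A_m$. Since $r_{e_i}(C^m_{\otimes}) = r_{e_i}(C)^m$ (as recalled just before (\ref{ineq}), or as in the proof of Theorem \ref{max_potence} via (\ref{local})), it suffices to prove $r_{e_i}(C^m_{\otimes}) \le r_{e_i}(P_1 \circ \cdots \circ P_m)$, and for this it is enough to establish the entrywise inequality
\[
C^m_{\otimes} \le P_1 \circ P_2 \circ \cdots \circ P_m,
\]
because $r_{e_i}$ is monotone with respect to the entrywise order, directly from the combinatorial description in Lemma \ref{vladimir}.

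To obtain this entrywise inequality I would apply (\ref{ineq}) with $k = m$ and the choice $A_{jl} = A_{l+j-1}$, where the subscript is read cyclically modulo $m$ within $\{1, \ldots, m\}$. Since the Hadamard product is commutative, for each fixed $j$ the matrices $A_{j1}, \ldots, A_{jm}$ are a permutation of $A_1, \ldots, A_m$, so each max-product factor on the left of (\ref{ineq}) collapses to $A_{j1} \circ \cdots \circ A_{jm} = A_1 \circ \cdots \circ A_m = C$, whence the left-hand side equals $C \otimes \cdots \otimes C = C^m_{\otimes}$. On the other hand, the $l$-th Hadamard factor on the right of (\ref{ineq}) is $A_{1l} \otimes A_{2l} \otimes \cdots \otimes A_{ml} = A_l \otimes A_{l+1} \otimes \cdots \otimes A_{l-1}$ (cyclically), which is exactly $P_l$, so the right-hand side is $P_1 \circ \cdots \circ P_m$. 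Combining gives $C^m_{\otimes} \le P_1 \circ \cdots \circ P_m$, hence $r_{e_i}(C)^m = r_{e_i}(C^m_{\otimes}) \le r_{e_i}(P_1 \circ \cdots \circ P_m)$, and taking $m$-th roots completes the first inequality.

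The main obstacle is the index bookkeeping: one must pick the $A_{jl}$ so that (\ref{ineq}) simultaneously collapses the left side to $C^m_{\otimes}$ and produces precisely the cyclic products $P_1, \ldots, P_m$ on the right. Once the correct cyclic relabelling $A_{jl} = A_{l+j-1}$ is identified, everything else is routine. One should also verify carefully that monotonicity of $r_{e_i}$ under the entrywise order is genuinely available — it is, since Lemma \ref{vladimir} expresses $r_{e_i}(A)$ as a maximum of products of entries of $A$.
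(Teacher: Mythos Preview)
Your proof is correct and follows essentially the same approach as the paper: both use $r_{e_i}(C)^m = r_{e_i}(C^m_{\otimes})$, rewrite $C^m_{\otimes}$ via the cyclic relabelling so that inequality (\ref{ineq}) yields $C^m_{\otimes} \le P_1 \circ \cdots \circ P_m$, and then invoke monotonicity of $r_{e_i}$ together with Corollary \ref{radius3}. Your version is simply more explicit about the index assignment $A_{jl}=A_{l+j-1}$ and about why $r_{e_i}$ is monotone, but the argument is identical in substance.
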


\begin{proof} We have
	\[r_{e_i} ^m (A_1 \circ \cdots \circ A_m ) = r_{e_i} ((A_1 \circ \cdots \circ A_m)^m  _{\otimes}) \]
	\[=  r_{e_i}  ((A_1 \circ \cdots \circ A_m) \otimes  (A_2 \circ \cdots \circ A_m \circ A_1) \otimes \cdots \otimes  (A_m \circ \cdots \circ A_{m-2} \circ A_{m-1}) )\]
	\[\le  r_{e_i} (P_1 \circ \cdots \circ P_m )\le r_{e_i} (P_1) \cdots r_{e_i} (P_m),\]
	which completes the proof.
\end{proof}

\begin{cor}\label{radius5}  If $A, B \in \zR_+^{n\times n}$, then for every  $1\le i \le n$
	\[r_{e_i} (A \circ B ) \le r_{e_i} ^{1/2} ((A\otimes B) \circ (B\otimes A))\le (r_{e_i} (A\otimes B)  r_{e_i} (B\otimes A))^{1/2}.\]
	
\end{cor}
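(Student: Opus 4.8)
The plan is to derive Corollary~\ref{radius5} as the special case $m=2$ of Theorem~\ref{radius4}. First I would set $A_1=A$ and $A_2=B$ in the statement of Theorem~\ref{radius4}. Then the matrices $P_j$ become $P_1=A_1\otimes A_2=A\otimes B$ and $P_2=A_2\otimes A_1=B\otimes A$, since in the cyclic product defining $P_j$ there are only two factors. Plugging these into the chain of inequalities of Theorem~\ref{radius4} immediately yields, for each $1\le i\le n$,
\[
r_{e_i}(A\circ B)\le r_{e_i}^{1/2}\bigl((A\otimes B)\circ(B\otimes A)\bigr)\le \bigl(r_{e_i}(A\otimes B)\,r_{e_i}(B\otimes A)\bigr)^{1/2},
\]
which is exactly the assertion of the corollary.

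There is essentially no obstacle here; the only things to keep straight are the indexing conventions in the definition of $P_j$ (to confirm that $P_1=A\otimes B$ and $P_2=B\otimes A$, and not some other ordering) and the fact that $m=2$ so that the middle and right-hand terms of the Theorem~\ref{radius4} chain specialize correctly. I would write the proof as a single short sentence invoking Theorem~\ref{radius4} with $m=2$, $A_1=A$, $A_2=B$, noting explicitly that then $P_1=A\otimes B$ and $P_2=B\otimes A$. If desired, one could alternatively give a direct two-line proof mirroring that of Theorem~\ref{radius4}: namely $r_{e_i}^2(A\circ B)=r_{e_i}\bigl((A\circ B)^2_\otimes\bigr)=r_{e_i}\bigl((A\circ B)\otimes(B\circ A)\bigr)$, then apply inequality (\ref{ineq}) with $k=m=2$ together with Corollary~\ref{radius3}. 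But the cleanest presentation is simply to cite the theorem just proved.

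Thus the proof I would supply reads: \emph{This follows immediately from Theorem~\ref{radius4} by taking $m=2$, $A_1=A$ and $A_2=B$, so that $P_1=A\otimes B$ and $P_2=B\otimes A$.}
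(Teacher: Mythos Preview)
Your proposal is correct and matches the paper's approach exactly: the paper states Corollary~\ref{radius5} immediately after Theorem~\ref{radius4} without any separate proof, so it is intended as the direct specialization $m=2$, $A_1=A$, $A_2=B$, giving $P_1=A\otimes B$ and $P_2=B\otimes A$, just as you describe.
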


The following example shows that the inequalities $r_{e_i} (A \circ B ) \le r_{e_i} (A \otimes B )$ and $r_{e_i} (A \circ B ) \le r_{e_i} (A^T \otimes B )$ are not correct in general.

\begin{ex}
	Let $A=\left[\begin{array}{cc} 0 & 1 \\1 & 0 \\ 
	\end{array}\right]$ and $B=\left[\begin{array}{cc} 0 & 1 \\\frac{1}{4} & 0 \\ 
	\end{array}\right]$. Then $A\circ B =B$, $A^T =A$ and
	$A\otimes B=\left[\begin{array}{cc} \frac{1}{4} & 0 \\0 & 1 \\ 
	\end{array}\right]$. So 
	\[r_{e_1} (A\circ B)= \frac{1}{2} >  \frac{1}{4} =  r_{e_1} (A \otimes B ) =  r_{e_1} (A^T \otimes B ).\]
\end{ex}

The above results (Propositions \ref{radius1} and \ref{radius2}, Theorem \ref{radius4}, Corollaries \ref{radius3} and \ref{radius5}) hold in fact for all $x \in \zR ^n _+$ not just for all $e_i$. It is trivial that they are valid for $x=0$. Moreover since for $x\neq 0$  the equality (\ref{local_max}) is valid,  
the following result holds. 

\begin{thm}\label{Lradius} Let $x \in \zR ^n _+$,  $A_1, \cdots, A_m, A, B \in  \zR_+^{n\times n}$, $t>0$ and let $\alpha _j$ for $j=1, \ldots ,m$ be positive numbers. 
	Let 
	$P_j =A_j  \otimes A_{j+1} \otimes \cdots  A_m \otimes  A_1 \otimes  \cdots \otimes A_{j-1}$ for $j=1, \ldots, m$. Then
	\begin{enumerate}
		\item $r_{x} (A^{(t)}) = r_{x} (A)^t,$
		\medskip
		
		\item $r_{x} (A_1 ^{(\alpha _1)} \circ \cdots \circ A_m ^{(\alpha _m)}) \le r_{x} (A_1)^{\alpha _1}   \cdots r_{x} (A_m)^{\alpha _m}, $
		\medskip
		
		\item $r_{x} (A_1 \circ \cdots \circ A_m ) \le r_{x} (A_1)  \cdots r_{x} (A_m),$
		\medskip
		
		\item $r_{x} (A_1 \circ \cdots \circ A_m ) \le r_{x} ^{1/m} (P_1 \circ \cdots \circ P_m )\le (r_{x} (P_1) \cdots r_{x} (P_m))^{1/m}, $
		\medskip
		
		\item $r_{x} (A \circ B ) \le r_{x} ^{1/2} ((A\otimes B) \circ (B\otimes A))\le (r_{x} (A\otimes B)  r_{x} (B\otimes A))^{1/2}.$
	\end{enumerate}
\end{thm}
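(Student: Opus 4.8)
The plan is to reduce Theorem~\ref{Lradius} to the already-proven statements for the standard basis vectors $e_i$, using the local spectral radius formula (\ref{local_max}), namely $r_x(C) = \max\{r_{e_j}(C): x_j \neq 0\}$ for $x \neq 0$. The case $x = 0$ is trivial for every item, so fix $x \in \zR_+^n$ with $x \neq 0$, and let $S = \{j : x_j \neq 0\}$.

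For item (1), I would argue directly: by (\ref{local_max}) and (\ref{Sch_t}) of Proposition~\ref{radius1},
$$r_x(A^{(t)}) = \max_{j \in S} r_{e_j}(A^{(t)}) = \max_{j \in S} r_{e_j}(A)^t = \left(\max_{j \in S} r_{e_j}(A)\right)^t = r_x(A)^t,$$
where the third equality uses that $t \mapsto s^t$ is increasing on $[0,\infty)$ so the maximum commutes with the $t$-th power. For items (2)--(5), the pattern is the same: each is an inequality (or chain of inequalities) of the form $r_{e_i}(\text{LHS}) \le r_{e_i}(\text{RHS})$, or with a product of terms $\prod_k r_{e_i}(C_k)$ on the right, valid for every fixed $i$ by Propositions~\ref{radius1}, \ref{radius2}, Corollaries~\ref{radius3}, \ref{radius5} and Theorem~\ref{radius4}. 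Taking the maximum over $j \in S$ of both sides and applying (\ref{local_max}) to each matrix appearing, one gets the $r_x$ version; the only subtlety is that on the right-hand side one must bound $\max_{j\in S}\prod_k r_{e_j}(C_k) \le \prod_k \max_{j\in S} r_{e_j}(C_k) = \prod_k r_x(C_k)$, which holds since all factors are nonnegative.

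Concretely, for item (3): for each $j \in S$, Corollary~\ref{radius3} gives $r_{e_j}(A_1 \circ \cdots \circ A_m) \le r_{e_j}(A_1) \cdots r_{e_j}(A_m)$; taking $\max_{j\in S}$ and using (\ref{local_max}) on the left and the product bound on the right yields $r_x(A_1 \circ \cdots \circ A_m) \le \max_{j\in S}\prod_k r_{e_j}(A_k) \le \prod_k \max_{j\in S} r_{e_j}(A_k) = r_x(A_1) \cdots r_x(A_m)$. Items (2), (4), (5) follow identically, replacing Corollary~\ref{radius3} by Proposition~\ref{radius2}, Theorem~\ref{radius4}, and Corollary~\ref{radius5} respectively, and noting that for (4) and (5) the intermediate term $r_{e_j}^{1/m}(P_1 \circ \cdots \circ P_m)$ passes to $r_x^{1/m}(P_1 \circ \cdots \circ P_m)$ by (\ref{local_max}) together with monotonicity of $s \mapsto s^{1/m}$.

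The main (and only genuine) obstacle is the direction of the inequality when the maximum over $j$ interacts with a \emph{product} of several $r_{e_j}$ terms on the right-hand side: one cannot replace $\max_j \prod_k r_{e_j}(C_k)$ by $\prod_k r_{e_j}(C_k)$ for a single fixed $j$ attaining the left max, but one does not need to — the weaker bound $\max_j \prod_k r_{e_j}(C_k) \le \prod_k (\max_j r_{e_j}(C_k))$ is exactly what matches the claimed right-hand side $\prod_k r_x(C_k)$. Once this is observed, the proof is a routine application of (\ref{local_max}) term by term, so I would state it briefly rather than write out all five cases in full.
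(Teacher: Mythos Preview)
Your proposal is correct and follows exactly the approach the paper takes: the paper simply remarks that the $e_i$-versions (Propositions~\ref{radius1}, \ref{radius2}, Theorem~\ref{radius4}, Corollaries~\ref{radius3}, \ref{radius5}) extend to arbitrary $x\in\zR_+^n$ via the identity~(\ref{local_max}), with the case $x=0$ being trivial. Your write-up is in fact more explicit than the paper's, since you spell out the one point worth noting, namely that $\max_{j\in S}\prod_k r_{e_j}(C_k)\le \prod_k \max_{j\in S} r_{e_j}(C_k)=\prod_k r_x(C_k)$.
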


	\bigskip
	
	Next we  formulate suitable analogues of the above result for distinguished eigenvalues  of  non-negative matrices.
	
	The following result for Hadamard weighted geometric means  was proved in \cite{P06}.
	\begin{thm}
		\label{good_work}
		Let 
		$\{A_{i j}\}_{i=1, j=1}^{l, m}$ be
		nonnegative $n\times n$ matrices, $x \in \zR ^n _+$  
		and
		$\alpha _1$, $\alpha _2$,..., $\alpha _m$ nonnegative numbers such that
		$s_m= \sum_{i=1}^m \alpha _i \ge 1$.
		Let 
		$$ \;\;\;\; \;\; \;\;\;B:= \left(A_{1 1}^{(\alpha _1)} \circ \cdots \circ A_{1 m}^{(\alpha _m)}\right) \ldots \left(A_{l 1}^{(\alpha _1)} \circ \cdots \circ A_{l m}^{(\alpha _m)} \right).$$ Then we have
		\begin{equation}
			\label{basic2}
			B \le  
			(A_{1 1} \cdots  A_{l 1})^{(\alpha _1)} \circ \cdots 
			\circ (A_{1 n} \cdots A_{l n})^{(\alpha _n)} . \\
		\end{equation}
		If, in addition, $s_m =1$, then
		\begin{equation}
		\nonumber
			\rho _x (A_1 ^{( \alpha _1)} \circ A_2 ^{(\alpha _2)} \circ \cdots \circ A_m ^{(\alpha _n)} ) \le
			\rho _x (A_1)^{ \alpha _1} \, \rho _x (A_2)^{\alpha _2} \cdots \rho _x (A_m)^{\alpha _m} ,
		\end{equation}
		\begin{eqnarray}
		\nonumber
			\rho _x \left(B \right)   &\le &  
			\nonumber
			\rho _x \left( (A_{1 1} \cdots  A_{l 1})^{(\alpha _1)} \circ \cdots 
			\circ (A_{1 m} \cdots A_{l m})^{(\alpha _m)} \right) \\
			&\le  &
			\rho _x \left( A_{1 1} \cdots  A_{l 1} \right)^{\alpha _1} \cdots 
			\rho _x \left( A_{1 m} \cdots A_{l m}\right)^{\alpha _m} .
			\nonumber
		\end{eqnarray}
	\end{thm}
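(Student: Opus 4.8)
The plan is to deduce both parts of Theorem~\ref{good_work} from a single entrywise estimate, namely (\ref{basic2}), proved by a two-step application of Hölder's inequality, and then to transfer (\ref{basic2}) to statements about the local spectral radius $\rho_x$ using the definition (\ref{local_def}) together with the fact that $B\le D$ entrywise implies $\rho_x(B)\le\rho_x(D)$ whenever $x\ge 0$ (since then $B^kx\le D^kx$ coordinatewise for all $k$).

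First I would prove (\ref{basic2}) by comparing the two sides entrywise. Writing $M_i=A_{i1}^{(\alpha_1)}\circ\cdots\circ A_{im}^{(\alpha_m)}$, one has $(M_i)_{ab}=\prod_{j=1}^m (A_{ij})_{ab}^{\alpha_j}$, so for fixed $p,q$, with $k_0:=p$, $k_l:=q$ and $K=(k_1,\dots,k_{l-1})$ ranging over intermediate tuples,
\[
(M_1\cdots M_l)_{pq}=\sum_{K}\prod_{j=1}^m c_{K,j}^{\alpha_j},\qquad c_{K,j}:=\prod_{i=1}^l (A_{ij})_{k_{i-1},k_i},
\]
while the $(p,q)$-entry of the right-hand side of (\ref{basic2}) is $\prod_{j=1}^m\bigl(\sum_K c_{K,j}\bigr)^{\alpha_j}$. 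Thus (\ref{basic2}) reduces to the elementary inequality $\sum_{K}\prod_{j} c_{K,j}^{\alpha_j}\le\prod_{j}\bigl(\sum_{K}c_{K,j}\bigr)^{\alpha_j}$ for nonnegative $c_{K,j}$ and $\alpha_j\ge 0$ with $s_m=\sum_j\alpha_j\ge 1$ (with the usual convention $0^0=1$ when some $\alpha_j=0$). This I would establish in two steps: put $\beta_j=\alpha_j/s_m$, so $\sum_j\beta_j=1$, and let $d_K=\prod_j c_{K,j}^{\beta_j}$. Then $\sum_K\prod_j c_{K,j}^{\alpha_j}=\sum_K d_K^{\,s_m}\le\bigl(\sum_K d_K\bigr)^{s_m}$ by monotonicity of the $\ell^p$-norms (exponent $s_m\ge 1$), and $\sum_K d_K=\sum_K\prod_j c_{K,j}^{\beta_j}\le\prod_j\bigl(\sum_K c_{K,j}\bigr)^{\beta_j}$ by the generalized Hölder inequality; raising the latter to the power $s_m$ and combining gives the claim.

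Next, assuming $s_m=1$, I would record the vector analogue: for nonnegative $B_1,\dots,B_m$ and $x\ge 0$,
\[
\bigl\|(B_1^{(\alpha_1)}\circ\cdots\circ B_m^{(\alpha_m)})x\bigr\|\le\prod_{j=1}^m\|B_j x\|^{\alpha_j},
\]
since the $p$-th coordinate of the left vector equals $\sum_q\prod_j\bigl((B_j)_{pq}x_q\bigr)^{\alpha_j}\le\prod_j\bigl(\sum_q (B_j)_{pq}x_q\bigr)^{\alpha_j}=\prod_j (B_j x)_p^{\alpha_j}\le\prod_j\|B_jx\|^{\alpha_j}$ by Hölder, using $x_q=\prod_j x_q^{\alpha_j}$. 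To prove the first displayed inequality of the theorem, set $C=A_1^{(\alpha_1)}\circ\cdots\circ A_m^{(\alpha_m)}$; applying (\ref{basic2}) with $l$ replaced by $k$ and all $A_{ij}=A_j$ gives $C^k\le (A_1^k)^{(\alpha_1)}\circ\cdots\circ (A_m^k)^{(\alpha_m)}$ entrywise, whence, since $x\ge 0$ and by the vector analogue, $\|C^k x\|\le\prod_j\|A_j^k x\|^{\alpha_j}$; taking $k$-th roots, letting $k\to\infty$, and using that the $\limsup$ of a product of finitely many nonnegative sequences is at most the product of their $\limsup$s yields $\rho_x(C)\le\prod_j\rho_x(A_j)^{\alpha_j}$. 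Finally, for the chain on $\rho_x(B)$: by (\ref{basic2}), $B\le D:=(A_{11}\cdots A_{l1})^{(\alpha_1)}\circ\cdots\circ(A_{1m}\cdots A_{lm})^{(\alpha_m)}$, and since $x\ge 0$ this gives $\rho_x(B)\le\rho_x(D)$; applying the inequality just proved with $A_j$ replaced by $A_{1j}\cdots A_{lj}$ bounds $\rho_x(D)$ by $\prod_j\rho_x(A_{1j}\cdots A_{lj})^{\alpha_j}$.

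The main obstacle is the entrywise estimate (\ref{basic2}) under the relaxed hypothesis $s_m\ge 1$ rather than $s_m=1$: a direct appeal to Hölder handles only exponents summing to $1$, so one genuinely needs the preliminary $\ell^p$-norm monotonicity step, and it is precisely there that $s_m\ge 1$ is used (and where, conversely, the sharper $\rho_x$-inequalities below require $s_m=1$ so that the vector Hölder step is available). Everything after that---the passage to powers $C^k$, the vector Hölder bound, and the behaviour of $\rho_x$ under coordinatewise domination and under $\limsup$---is routine bookkeeping.
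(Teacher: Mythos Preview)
The paper does not supply a proof of Theorem~\ref{good_work}: it is quoted as a known result from \cite{P06} (``The following result for Hadamard weighted geometric means was proved in \cite{P06}''), and is then used as a black box in the proof of the subsequent Theorem~\ref{good_theorem}. So there is no in-paper argument to compare against.

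Your argument is correct and is the natural route. The reduction of (\ref{basic2}) to the scalar inequality $\sum_K\prod_j c_{K,j}^{\alpha_j}\le\prod_j(\sum_K c_{K,j})^{\alpha_j}$ is right, and your two-step proof of the latter---$\ell^p$-monotonicity to pass from $s_m\ge1$ to exponents $\beta_j$ summing to $1$, followed by generalized H\"older---is the standard way to handle the relaxed hypothesis. The transfer to $\rho_x$ is also sound: iterating (\ref{basic2}) on powers gives $C^k\le (A_1^k)^{(\alpha_1)}\circ\cdots\circ(A_m^k)^{(\alpha_m)}$, the vector H\"older step (valid because $s_m=1$ forces $x_q=\prod_j x_q^{\alpha_j}$) gives $\|C^kx\|\le\prod_j\|A_j^kx\|^{\alpha_j}$, and the $\limsup$ of a finite product of nonnegative sequences is indeed bounded by the product of the individual $\limsup$s. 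Given that the paper's inequality (\ref{t_dobro}), also attributed to \cite{P06}, is exactly the equal-exponent case of (\ref{basic2}), your reconstruction almost certainly matches the cited source in spirit.
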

	\bigskip
	The following result improves Theorem \ref{good_work}.
	\begin{thm}
	\label{good_theorem}
	 Let $A_1, \ldots , A_m$ and $\{A_{i j}\}_{i=1, j=1}^{l, m}$ be
		nonnegative $n\times n$ matrices, $x \in \zR ^n _+$,  $t \ge 1$ and
		$\alpha _1$, $\alpha _2$,..., $\alpha _m$ nonnegative numbers such that
		$s_m= \sum_{i=1}^m \alpha _i \ge 1$. Let $B$ be defined as in Theorem \ref{good_work}.
		Then we have
		\begin{equation}
			\label{rhoxt}
			\rho _x (A^{(t)}) \le \rho _x (A)^t,
		\end{equation}
		\begin{equation}
			\label{t2}
			\rho _x (A_{1} ^{(t)} \cdots  A_{m} ^{(t)}) \le \rho _x (( A_1 \cdots A_m )^{(t)}) \le  \rho _x (A_1 \cdots A_m )^t,
		\end{equation}
		\begin{equation}
			\rho _x (A_1 ^{( \alpha _1)} \circ A_2 ^{(\alpha _2)} \circ \cdots \circ A_m ^{(\alpha _n)} ) \le
			\rho _x (A_1)^{ \alpha _1} \, \rho _x (A_2)^{\alpha _2} \cdots \rho _x (A_m)^{\alpha _m} ,
			\label{gl1vecr}
		\end{equation}
		\begin{eqnarray}
		\nonumber
			\rho _x \left(B \right)   &\le &  
			\nonumber
			\rho _x \left( (A_{1 1} \cdots  A_{l 1})^{(\alpha _1)} \circ \cdots 
			\circ (A_{1 m} \cdots A_{l m})^{(\alpha _m)} \right) \\
			&\le  &
			\rho _x \left( A_{1 1} \cdots  A_{l 1} \right)^{\alpha _1} \cdots 
			\rho _x \left( A_{1 m} \cdots A_{l m}\right)^{\alpha _m} .
			\label{very_good}
		\end{eqnarray}
		
	\end{thm}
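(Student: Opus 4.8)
The plan is to derive all four inequalities of Theorem \ref{good_theorem} from the corresponding known facts for Hadamard powers together with the transfer results established in Section 2, in particular Corollary \ref{MCOR1} (the identity $\rho_x(A^m)=\rho_x(A)^m$) and Proposition \ref{MTHM2} for non-integer exponents, which let us reduce statements about $A^{(t)}$ to statements about $\rho_x$ of ordinary products. I would treat each displayed inequality in turn, reusing the earlier one as we go.

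First, for \eqref{rhoxt} I would copy the argument of Lemma \ref{lemma_rhot} verbatim but with $e_i$ replaced by $x$: by \eqref{t_dobro} (applied with all factors equal to $A$, valid since $t\ge 1$) we have $(A^k)^{(t)}\ge (A^{(t)})^k$ entrywise, hence $\|(A^{(t)})^k x\|\le\|(A^k)^{(t)}x\|$; and since the maximal entry reached from the support of $x$ in $(A^k)^{(t)}$ is the $t$-th power of the corresponding entry in $A^k$, one gets $\|(A^k)^{(t)}x\|^{1/k}=\|A^k x\|^{t/k}\to\rho_x(A)^t$. Taking $\limsup$ over $k$ gives \eqref{rhoxt}. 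For \eqref{t2}, the first inequality is just \eqref{t_dobro} applied inside $\rho_x$ together with monotonicity of $\rho_x$ in the matrix entries; the second inequality is \eqref{rhoxt} applied to the single matrix $A_1\cdots A_m$.

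Next, for \eqref{gl1vecr} I would split into the two cases used in Proposition \ref{radius2}. When $s_m=1$ this is exactly the inequality already recorded in Theorem \ref{good_work}. For the general case $s_m\ge 1$, set $\beta_j=\alpha_j/s_m$ so that $\sum\beta_j=1$ and $\alpha_j=\beta_j s_m$; then
\[
A_1^{(\alpha_1)}\circ\cdots\circ A_m^{(\alpha_m)}=\bigl(A_1^{(\beta_1)}\circ\cdots\circ A_m^{(\beta_m)}\bigr)^{(s_m)},
\]
so by \eqref{rhoxt} (legitimate since $s_m\ge 1$) and then the $s_m=1$ case,
\[
\rho_x\bigl(A_1^{(\alpha_1)}\circ\cdots\circ A_m^{(\alpha_m)}\bigr)\le\rho_x\bigl(A_1^{(\beta_1)}\circ\cdots\circ A_m^{(\beta_m)}\bigr)^{s_m}\le\bigl(\rho_x(A_1)^{\beta_1}\cdots\rho_x(A_m)^{\beta_m}\bigr)^{s_m}=\rho_x(A_1)^{\alpha_1}\cdots\rho_x(A_m)^{\alpha_m}.
\]
Finally, \eqref{very_good}: the first inequality of that display is \eqref{basic2} (which holds for all $s_m\ge 1$) fed into $\rho_x$ using monotonicity; the second inequality is obtained by applying \eqref{gl1vecr} with the matrices $A_j$ there replaced by the products $A_{1j}\cdots A_{lj}$, $j=1,\dots,m$. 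This dispatches all four claims.

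The main obstacle, such as it is, is making sure the reduction in \eqref{rhoxt} is airtight: one must justify that $\|(A^k)^{(t)}x\|=\|A^kx\|^t$, which rests on the fact that $\|A^kx\|$ is the largest entry of $A^k$ occurring in a row reachable with a nonzero $x$-weight, and that entrywise $t$-th powers preserve the ``argmax'' — this is the same bookkeeping as in Lemma \ref{lemma_rhot} and is the only place where $t\ge 1$ is genuinely needed (through \eqref{t_dobro}). Everything else is a formal consequence of earlier results, so I would keep the write-up short, pointing to \eqref{t_dobro}, \eqref{basic2}, Lemma \ref{lemma_rhot}, Corollary \ref{MCOR1} and Theorem \ref{good_work} rather than re-deriving them.
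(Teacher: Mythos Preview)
Your treatment of \eqref{t2}, \eqref{gl1vecr} and \eqref{very_good} matches the paper's proof exactly and is correct. The gap is in your argument for \eqref{rhoxt}. The identity $\|(A^k)^{(t)}x\|=\|A^kx\|^t$ that you invoke is \emph{false} for general $x\in\zR_+^n$: the norm $\|A^kx\|=\max_i\sum_j (A^k)_{ij}x_j$ is a weighted row sum, not ``the largest entry of $A^k$ occurring in a row reachable with a nonzero $x$-weight''. For instance, with $n=1$, $A=(1)$, $x=(1/2)$ and $t=2$ one gets $\|(A)^{(t)}x\|=1/2$ but $\|Ax\|^t=1/4$; in higher dimension the same phenomenon occurs whenever $x$ has a small positive entry. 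Thus the limsup computation you propose does not go through.

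The paper avoids this by not trying to run Lemma~\ref{lemma_rhot} for general $x$ at all. Instead it uses the identity \eqref{max_rho_x}, $\rho_x(C)=\max\{\rho_{e_i}(C):x_i\neq 0\}$, to pick $i$ with $\rho_x(A^{(t)})=\rho_{e_i}(A^{(t)})$, applies Lemma~\ref{lemma_rhot} at that basis vector (where the column--max identity $\|(A^k)^{(t)}e_i\|=\|A^ke_i\|^t$ \emph{is} valid), and then bounds $\rho_{e_i}(A)^t\le\rho_x(A)^t$ by another application of \eqref{max_rho_x}. Once \eqref{rhoxt} is secured this way, your derivations of \eqref{t2}, \eqref{gl1vecr} and \eqref{very_good} are identical to the paper's.
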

	\begin{proof} We may assume that $x\neq 0$. By (\ref{max_rho_x}) and  Lemma \ref{lemma_rhot}  there exists $i$ such that
	$$\rho _x (A^{(t)})= \rho _{e_i} (A^{(t)})  \le \rho _{e_i} (A)^t \le \rho _x (A)^t,$$
	which proves (\ref{rhoxt}).
		Inequalities (\ref{t2}) follow from (\ref{t_dobro}) and (\ref{rhoxt}).
		
		To establish  (\ref{gl1vecr}) let $\beta _i = \frac{\alpha _i}{s_m}$, $i=1, \ldots , m$. Then $\sum_{i=1}^m \beta _i = 1$ and so we have by (\ref{rhoxt}) and Theorem \ref{good_work}
		$$\rho _x (A_1 ^{( \alpha _1)} \circ A_2 ^{(\alpha _2)} \circ \cdots \circ A_m ^{(\alpha _n)} ) \le \rho _x (A_1 ^{( \beta _1)} \circ A_2 ^{(\beta _2)} \circ \cdots \circ A_m ^{(\beta _n)} )^{s_m}  $$
		$$\le (\rho _x (A_1)^{ \beta _1} \, \rho _x (A_2)^{\beta _2} \cdots \rho _x (A_m)^{\beta _m})^{s_m} = \rho _x (A_1)^{ \alpha _1} \, \rho _x (A_2)^{\alpha _2} \cdots \rho _x (A_m)^{\alpha _m},$$
		which proves (\ref{gl1vecr}). Inequalities (\ref{very_good}) follow from (\ref{basic2}) and 
		(\ref{gl1vecr}).

	\end{proof}
	
	\begin{cor}
		\label{first}
		Suppose that $x\in \zR _+ ^{n}$ and $A_1,A_2,\dots,A_m,A, B \in \zR _+ ^{n\times n}$. 
		Then

		$$\rho _{x}(A_1\circ\dots \circ A_m)\leq \rho_x(A_1)\dots \rho_x(A_m),$$
		$$\rho_x(A\circ B)\le \rho_x(A)\rho_x(B),$$
	\end{cor}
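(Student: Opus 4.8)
The statement to prove is Corollary~\ref{first}, which asserts that for $x \in \zR_+^n$ and nonnegative $n \times n$ matrices $A_1, \ldots, A_m, A, B$ we have $\rho_x(A_1 \circ \cdots \circ A_m) \le \rho_x(A_1) \cdots \rho_x(A_m)$ and the two-factor special case $\rho_x(A \circ B) \le \rho_x(A)\rho_x(B)$.

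The plan is to read this off as the special case $t = \alpha_1 = \cdots = \alpha_m = 1$ of inequality~(\ref{gl1vecr}) in Theorem~\ref{good_theorem}. First I would note that with $\alpha_i = 1$ for all $i$ the hypothesis $s_m = \sum_{i=1}^m \alpha_i = m \ge 1$ is satisfied. Then $A_i^{(\alpha_i)} = A_i^{(1)} = A_i$, so the left-hand side of (\ref{gl1vecr}) becomes $\rho_x(A_1 \circ \cdots \circ A_m)$ and the right-hand side becomes $\rho_x(A_1) \cdots \rho_x(A_m)$, giving the first displayed inequality immediately. The second displayed inequality $\rho_x(A \circ B) \le \rho_x(A)\rho_x(B)$ is just the case $m = 2$ with $A_1 = A$, $A_2 = B$.

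There is really no obstacle here: the corollary is a direct specialization of a result already proved. If one wanted a self-contained micro-argument instead of citing (\ref{gl1vecr}), one could invoke Theorem~\ref{Lradius}(3) together with Proposition~\ref{radius2} / Corollary~\ref{radius3} to handle the max-algebra analogue and then mirror the argument, but since Theorem~\ref{good_theorem} is stated just above, the cleanest route is simply to say: ``Take $t = 1$ and $\alpha_1 = \cdots = \alpha_m = 1$ in Theorem~\ref{good_theorem}; then $s_m = m \ge 1$, $A_i^{(1)} = A_i$, and (\ref{gl1vecr}) becomes exactly the claimed inequalities, the two-factor case being $m = 2$.'' The only thing to be slightly careful about is that (\ref{gl1vecr}) is stated for $\alpha_i$ nonnegative with $s_m \ge 1$, so one should confirm that the all-ones choice meets those constraints — which it does — and that the case $x = 0$ is trivial, as already observed in the proof of Theorem~\ref{good_theorem}.

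I do not anticipate any hard part; the entire content of the corollary is packaged inside Theorem~\ref{good_theorem}, and the proof amounts to substituting specific parameter values and simplifying $A_i^{(1)} = A_i$.
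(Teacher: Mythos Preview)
Your proposal is correct and is precisely the intended derivation: the paper gives no separate proof for this corollary because it is the immediate specialization $\alpha_1=\cdots=\alpha_m=1$ (so $s_m=m\ge 1$ and $A_i^{(1)}=A_i$) of inequality~(\ref{gl1vecr}) in Theorem~\ref{good_theorem}, with the second line being the case $m=2$.
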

	The following result is proved a similar manner as Theorem \ref{radius4}.
	\begin{prop} 
		\label{some_P}
		Let $A, B, A_1, \ldots , A_m$  be
		nonnegative $n\times n$ matrices, $x\in \zR _+ ^{n}$ and let  $P_j=A_j A_{j+1}~\dots~A_mA_1~\dots~A_{j-1}$ for $j=1,\dots,m$.
		
		Then 
		\begin{equation}
			\label{analogue_P_m}
			\rho _{x}(A_1\circ\dots \circ A_m)\leq \rho _{x}(P_1\circ\dots \circ P_m)^{1/m}\leq (\rho _x (P_1)\dots \rho_x(P_m))^{1/m}.
		\end{equation}
		In particular,
		\begin{equation}
			\label{P_2}
			\rho _{x}(A\circ B)\leq {\rho_x}(AB \circ BA)^{\frac{1}{2}}\leq {\rho _{x}(AB)^{\frac{1}{2}} \rho _{x}(BA)}^{\frac{1}{2}}.
		\end{equation}
	\end{prop}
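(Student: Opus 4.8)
The plan is to mimic the argument used for Theorem \ref{radius4}, replacing the max powers by ordinary powers and the geometric eigenvalues $r_{e_i}$ by the distinguished local spectral radii $\rho_x$. First I would reduce to $x = e_i$ for an index $i$ with $x_i \neq 0$: by the lemma giving $\rho_x(A) = \max\{\rho_{e_i}(A) : x_i \neq 0\}$, it suffices to establish the chain of inequalities for each standard basis vector, and then take a maximum over the relevant indices. (One must be a little careful here: the index $i$ realizing the maximum of $\rho_{e_i}(A_1 \circ \cdots \circ A_m)$ need not obviously be the same index realizing the maxima on the right, but since $\rho_{e_i}(M) \le \rho_x(M)$ for every $M$ whenever $x_i \neq 0$, the inequality for a fixed $e_i$ on the left and the same $e_i$ inside each factor on the right, combined with monotonicity of the right-hand side under passing from $e_i$ to $x$, still yields the statement. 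Alternatively one simply quotes that the proposition, being an "$x$-version", follows from the $e_i$-version exactly as in the passage preceding Theorem \ref{Lradius}.)

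Next, for the $e_i$-version, the key identity is $\rho_{e_i}((A_1 \circ \cdots \circ A_m)^m) = \rho_{e_i}(A_1 \circ \cdots \circ A_m)^m$, which is Corollary \ref{MCOR1} (formula (\ref{power_l})) with $x = e_i$. Then I would expand the classical $m$-th power of the Hadamard product and apply the entrywise inequality (\ref{ineq}) with all $A_{jl}$ chosen as cyclic shifts of $A_1, \ldots, A_m$: this gives
\[
(A_1 \circ \cdots \circ A_m)^m \le (A_1 A_2 \cdots A_m) \circ (A_2 \cdots A_m A_1) \circ \cdots \circ (A_m A_1 \cdots A_{m-1}) = P_1 \circ \cdots \circ P_m .
\]
Monotonicity of $\rho_{e_i}$ under the entrywise order (clear from the definition $\rho_{e_i}(M) = \limsup_k \|M^k e_i\|^{1/k}$, since all entries are nonnegative) then yields $\rho_{e_i}(A_1 \circ \cdots \circ A_m)^m \le \rho_{e_i}(P_1 \circ \cdots \circ P_m)$, which is the first inequality in (\ref{analogue_P_m}) after taking $m$-th roots. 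The second inequality in (\ref{analogue_P_m}) is then immediate from Corollary \ref{first} (the $\rho_x$-analogue of Corollary \ref{radius3}) applied to $P_1, \ldots, P_m$: $\rho_{e_i}(P_1 \circ \cdots \circ P_m) \le \rho_{e_i}(P_1) \cdots \rho_{e_i}(P_m)$. Finally, (\ref{P_2}) is just the case $m = 2$ with $A_1 = A$, $A_2 = B$, so that $P_1 = AB$ and $P_2 = BA$.

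I do not expect a serious obstacle here; the proposition is essentially a transcription of the max-algebra argument, and all the needed ingredients — the power identity (\ref{power_l}), the entrywise submultiplicativity (\ref{ineq}), and the Hadamard-product bound of Corollary \ref{first} — are already available. The only point that requires a moment's attention is the monotonicity of $\rho_x$ (equivalently $\rho_{e_i}$) with respect to the entrywise matrix order, but this follows at once from the $\limsup$ formula together with the fact that matrix multiplication and the norm $\|\cdot\|$ preserve the entrywise order on nonnegative matrices. So I would state that monotonicity explicitly as a one-line observation and then carry out the expansion-and-shift computation exactly as in the proof of Theorem \ref{radius4}.
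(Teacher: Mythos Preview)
Your proposal is correct and follows exactly the route the paper indicates (the paper says the result ``is proved in a similar manner as Theorem \ref{radius4}''), using the power identity (\ref{power_l}), the Hadamard--product submultiplicativity, monotonicity of $\rho_x$, and Corollary \ref{first}. One small correction: the entrywise inequality you invoke for \emph{classical} products --- namely $(A_1\circ\cdots\circ A_m)^m \le P_1\circ\cdots\circ P_m$ --- is (\ref{basic2}) of Theorem \ref{good_work} with all $\alpha_j=1$, not (\ref{ineq}), which is the max-algebra ($\otimes$) version.
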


	\begin{rem}{\rm Let $A, B, A_1, \ldots , A_m,P_1, \ldots , P_m $ be as in Proposition \ref{some_P}. If,  in addition,  
 $\alpha_1,\dots ,\alpha_m>0$ and $A_1,A_2,\dots,A_m$ are positive semidefinite matrices 
			then
			\[
			\rho_{x}({A_1}^{\alpha_1}\circ\cdots \circ A_m^{\alpha_m})\leq \rho_x(A_1)^{\alpha_1}\cdots \rho_x(A_m)^{\alpha_m}.\]
		}
	\end{rem}

	\bigskip
	
	We conclude the article by pointing out a norm inequality on a triple Jordan product in max algebra (Proposition \ref{norm1}).
	If $A = [a_{ij}]$ is a non-negative matrix, let $A^T = [a_{ji}]$ denote  a transpose matrix of $A$. 
	The following proposition is known (\cite{RLP19})  and easy to verify.
	
	\begin{prop}\label{prop3.1}
		Let $A\in  \zR_+ ^{n\times n}$. Then
		\[\|A\|^2= r_{\otimes} (A^T \otimes A) = r_{\otimes} (A\otimes A^T).\]
	\end{prop}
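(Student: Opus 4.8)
The plan is to read off both quantities directly from the entrywise definitions and then pin down $r_\otimes$ by the elementary sandwich $\max_i c_{ii}\le r_\otimes(C)\le \|C\|$. First I would record that $(A^T\otimes A)_{ij}=\max_k a_{ki}a_{kj}$, so that the loop (length-one cycle) at $i$ gives $(A^T\otimes A)_{ii}=\max_k a_{ki}^2$, while for arbitrary $i,j$ we have $(A^T\otimes A)_{ij}=\max_k a_{ki}a_{kj}\le \|A\|^2$. Taking $i=j$ and maximizing over $i$ and $k$ yields $\|A^T\otimes A\|=\max_{i,j}\max_k a_{ki}a_{kj}=\max_{i,k}a_{ki}^2=\|A\|^2$, and likewise $\max_i (A^T\otimes A)_{ii}=\|A\|^2$.

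Next I would invoke two immediate properties of the maximum cycle geometric mean. For any $C\in\zR_+^{n\times n}$, a loop at $i$ is a cycle, hence $\max_i c_{ii}\le r_\otimes(C)$; and every factor along any cycle is at most $\|C\|$ (equivalently, by the Gelfand-type formula $r_\otimes(C)=\inf_{j}\|C^j_\otimes\|^{1/j}\le\|C\|$), hence $r_\otimes(C)\le\|C\|$. Applying this with $C=A^T\otimes A$ and combining with the first step gives
\[
\|A\|^2=\max_i (A^T\otimes A)_{ii}\le r_\otimes(A^T\otimes A)\le \|A^T\otimes A\|=\|A\|^2,
\]
so equality holds throughout and $r_\otimes(A^T\otimes A)=\|A\|^2$.

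Finally, the same reasoning applies verbatim to $A\otimes A^T$: here $(A\otimes A^T)_{ij}=\max_k a_{ik}a_{jk}$, so $(A\otimes A^T)_{ii}=\max_k a_{ik}^2$ and $\|A\otimes A^T\|=\max_{i,k}a_{ik}^2=\|A\|^2$, whence $r_\otimes(A\otimes A^T)=\|A\|^2$ by the identical sandwich. I do not expect any genuine obstacle; the only point that deserves a line of justification is the double inequality $\max_i c_{ii}\le r_\otimes(C)\le\|C\|$, and both halves are immediate from the definition of $r_\otimes$ as a maximum over cycle geometric means together with the Gelfand-type formula quoted in the preliminaries.
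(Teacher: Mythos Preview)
Your proof is correct. The paper does not actually give a proof of this proposition; it simply states that the result is known (citing \cite{RLP19}) and easy to verify, so there is no argument to compare against. Your sandwich $\max_i c_{ii}\le r_\otimes(C)\le\|C\|$ together with the entrywise computation of $A^T\otimes A$ and $A\otimes A^T$ is exactly the kind of direct verification the paper has in mind. One cosmetic remark: the sentence ``Taking $i=j$ and maximizing over $i$ and $k$ yields $\|A^T\otimes A\|=\ldots$'' reads as if the diagonal alone gives the full maximum; of course you had already observed just before that every entry is bounded by $\|A\|^2$, so the two observations together give $\|A^T\otimes A\|=\|A\|^2$. It would be cleaner to phrase it that way explicitly.
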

	The following result was established in  \cite[Theorem 5.4]{RLP19}.
	\begin{thm} Let $A,B, A_1, \ldots, A_m  \in \zR_+ ^{n\times n}$. 

	\noindent If $m$ is even, then
	\begin{equation}
	\label{th5sod_notref}
	 \|A_1 \circ A_2 \circ \cdots \circ A_m \|^2 \le 
	 \end{equation}
	 $$
	 r_{\otimes} (A_1 ^T \otimes A_2 \otimes A_3 ^T \otimes A_4 \otimes \cdots A_{m-1} ^T \otimes A_m)\,  r_{\otimes} (A_1 \otimes A_2 ^T \otimes A_3 \otimes A_4 ^T \otimes \cdots A_{m-1} \otimes A_m ^T  ) $$
	\begin{equation}
	\nonumber
	=  r_{\otimes} (A_1 ^T \otimes A_2 \otimes A_3 ^T \otimes A_4 \otimes  \cdots \otimes  A_{m-1} ^T \otimes  A_m) \, r_{\otimes} (A_m \otimes A_{m-1} ^T \otimes \cdots A_{4} \otimes A_3 ^T \otimes  A_2\otimes A_1 ^T ).
	\end{equation}

	\noindent If $m$ is odd, then
	\begin{equation}
	\label{th5lih_notref}
	\|A_1 \circ A_2 \circ \cdots \circ A_m \|^2 \le 
	\end{equation}  
	\begin{equation}
	\nonumber
	r_{\otimes} (A_1 \otimes  A_2 ^T \otimes  \cdots \otimes A_{m-2} \otimes A_{m-1} ^T \otimes  A_m \otimes A_1 ^T \otimes A_2 \otimes A_3 ^T \otimes A_4 \otimes  \cdots A_{m-2}^T \otimes  A_{m-1}\otimes A_m ^T  )
	\end{equation}
	In particular, Inequalities (\ref{kvmax}) hold.
	\label{th5H}
\end{thm}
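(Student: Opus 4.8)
The plan is to base everything on the identity $\|M\|^2 = r_{\otimes}(M^T \otimes M) = r_{\otimes}(M \otimes M^T)$ of Proposition \ref{prop3.1}, applied to $M = C := A_1 \circ A_2 \circ \cdots \circ A_m$, and then to move the Hadamard product past the max-algebra product using inequality (\ref{ineq}), the monotonicity of $r_{\otimes}$ and of $\otimes$ for the entrywise order, the estimate $r_{\otimes}(B_1 \circ \cdots \circ B_p) \le r_{\otimes}(B_1 \otimes \cdots \otimes B_p)$ contained in (\ref{Humu}), and --- for even $m$ only --- the max-algebra Elsner-type inequality $r_{\otimes}(X \circ Y) \le r_{\otimes}(X)\,r_{\otimes}(Y)$, which is known and follows by taking the maximum over $i$ in Corollary \ref{radius3}. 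I shall also freely use that $r_{\otimes}$ is invariant under transposition and under cyclic permutation of the $\otimes$-factors, that $(X \otimes Y)^T = Y^T \otimes X^T$ and $(X \circ Y)^T = X^T \circ Y^T$, and that the Hadamard product is commutative, so that $C^T = A_1^T \circ \cdots \circ A_m^T$.

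For even $m$ I would split the Hadamard factors of $C$ into the odd- and even-indexed blocks $U := A_1 \circ A_3 \circ \cdots \circ A_{m-1}$ and $V := A_2 \circ A_4 \circ \cdots \circ A_m$, so that $C = U \circ V = V \circ U$. Using Proposition \ref{prop3.1}, writing $C^T \otimes C = (U^T \circ V^T) \otimes (V \circ U)$ and applying (\ref{ineq}) with two $\otimes$-factors,
\[
\|C\|^2 = r_{\otimes}\big((U^T \circ V^T) \otimes (V \circ U)\big) \le r_{\otimes}\big((U^T \otimes V) \circ (V^T \otimes U)\big) \le r_{\otimes}(U^T \otimes V)\, r_{\otimes}(V^T \otimes U).
\]
Then I would expand each of the two factors once more with (\ref{ineq}): since $U^T \otimes V = (A_1^T \circ A_3^T \circ \cdots \circ A_{m-1}^T) \otimes (A_2 \circ A_4 \circ \cdots \circ A_m)$, pairing the $m/2$ Hadamard factors in order gives $U^T \otimes V \le (A_1^T \otimes A_2) \circ (A_3^T \otimes A_4) \circ \cdots \circ (A_{m-1}^T \otimes A_m)$, and hence by monotonicity of $r_{\otimes}$ together with (\ref{Humu}),
\[
r_{\otimes}(U^T \otimes V) \le r_{\otimes}\big((A_1^T \otimes A_2) \otimes (A_3^T \otimes A_4) \otimes \cdots \otimes (A_{m-1}^T \otimes A_m)\big) = r_{\otimes}(A_1^T \otimes A_2 \otimes A_3^T \otimes \cdots \otimes A_{m-1}^T \otimes A_m),
\]
and symmetrically $r_{\otimes}(V^T \otimes U) = r_{\otimes}(U \otimes V^T) \le r_{\otimes}(A_1 \otimes A_2^T \otimes A_3 \otimes \cdots \otimes A_{m-1} \otimes A_m^T)$. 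Multiplying the two bounds gives (\ref{th5sod_notref}); the reversed form of the second factor is merely its transpose, which leaves $r_{\otimes}$ unchanged.

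For odd $m$ the right-hand side is a single $r_{\otimes}$, and here I would use a direct entrywise domination rather than splitting. Set $E := A_1 \otimes A_2^T \otimes A_3 \otimes A_4^T \otimes \cdots \otimes A_{m-1}^T \otimes A_m$ (transpose in the even positions) and $E' := A_1^T \otimes A_2 \otimes A_3^T \otimes \cdots \otimes A_{m-1} \otimes A_m^T$ (transpose in the odd positions). I claim $E \ge C$ and $E' \ge C^T$ entrywise: for fixed $i,j$, in the maximum defining $E_{ij}$ choose the zig-zag internal index to be $j$ at the odd steps and $i$ at the even steps; one checks directly that every factor then collapses to $(A_l)_{ij}$ --- this is where oddness of $m$ enters, since it makes the final factor $A_m$ land on $(A_m)_{ij}$ --- so $E_{ij} \ge \prod_{l=1}^m (A_l)_{ij} = C_{ij}$, and likewise for $E'$ with the transposed matrices. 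Since $\otimes$ and $r_{\otimes}$ are monotone, Proposition \ref{prop3.1} gives
\[
\|C\|^2 = r_{\otimes}(C \otimes C^T) \le r_{\otimes}(E \otimes E') = r_{\otimes}\big(A_1 \otimes A_2^T \otimes \cdots \otimes A_{m-1}^T \otimes A_m \otimes A_1^T \otimes A_2 \otimes \cdots \otimes A_{m-1} \otimes A_m^T\big),
\]
which is (\ref{th5lih_notref}). Finally, (\ref{kvmax}) is the case $m=2$ of the even statement: it gives $\|A \circ B\|^2 \le r_{\otimes}(A^T \otimes B)\, r_{\otimes}(A \otimes B^T) = r_{\otimes}(A^T \otimes B)^2$ because $r_{\otimes}(A \otimes B^T) = r_{\otimes}(B^T \otimes A) = r_{\otimes}((A^T \otimes B)^T) = r_{\otimes}(A^T \otimes B)$, so $\|A \circ B\| \le r_{\otimes}(A^T \otimes B)$, while $r_{\otimes}(A \circ B) \le \|A \circ B\|$ holds by the Gelfand-type formula.

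The work is almost entirely combinatorial bookkeeping, and I expect two points to require care. First, one must notice that the even and odd cases genuinely have different shapes --- a product of two maximum cycle geometric means of length-$m$ words versus a single one of a length-$2m$ word --- and treat them separately. Second, the pattern of transposes must be tracked exactly: in the even case it is the harmless reordering $C = V \circ U$ followed by pairing odd-indexed with even-indexed Hadamard factors in (\ref{ineq}) that manufactures the alternating pattern $A_1^T \otimes A_2 \otimes A_3^T \otimes \cdots$; in the odd case one must verify that the zig-zag path makes every factor of $E_{ij}$ equal to $(A_l)_{ij}$ and see why the same choice fails for even $m$ (the last factor becomes $(A_m)_{jj}$), which is precisely the structural reason the two cases differ. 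No individual step is computationally heavy.
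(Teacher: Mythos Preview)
The paper does not actually prove this theorem: it is stated there as a quotation of \cite[Theorem 5.4]{RLP19} and is used only as an input to Propositions \ref{norm} and \ref{norm1}. So there is no in-paper proof to compare against.

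Your argument is nonetheless correct and entirely self-contained within the tools the paper records. In the even case, the splitting $C=U\circ V$ with $U$ and $V$ the odd- and even-indexed Hadamard subproducts, followed by two applications of (\ref{ineq}) and then (\ref{Humu}), produces exactly the alternating transpose pattern required; the only point worth making explicit is that to land on $r_\otimes(A_1\otimes A_2^T\otimes\cdots\otimes A_{m-1}\otimes A_m^T)$ you must first pass from $r_\otimes(V^T\otimes U)$ to $r_\otimes(U\otimes V^T)$ by the cyclic invariance $r_\otimes(X\otimes Y)=r_\otimes(Y\otimes X)$ before applying (\ref{ineq}) --- which you do. In the odd case your zig-zag index choice $k_1=j,\,k_2=i,\,k_3=j,\ldots,k_{m-1}=i$ does indeed collapse every factor of $E_{ij}$ to $(A_l)_{ij}$ (and of $E'_{ij}$ to $(A_l)_{ji}$), giving $E\ge C$ and $E'\ge C^T$ entrywise; the monotonicity of $\otimes$ and of $r_\otimes$ then finishes the job. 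Your derivation of (\ref{kvmax}) as the $m=2$ instance is also correct. The observation that the same zig-zag fails for even $m$ because the last factor becomes $(A_m)_{jj}$ is a nice structural explanation of why the two parities behave differently.
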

	
	The following result  refines  (\ref{kvmax}). 
	
	\begin{prop}\label{norm}  Let $A, B \in  \zR_+^{n\times n}$. Then
		\[\|A \circ B \|\le r_{\otimes} ^{1/2} ((A^T \otimes B)\circ (B^T \otimes A) ) \le r_{\otimes} (A^T \otimes B).\]
	\end{prop}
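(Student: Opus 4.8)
The plan is to set $M := A^T \otimes B$ and to reduce everything to two elementary facts about this single matrix: that its transpose is $M^T = B^T \otimes A$, so that the middle quantity in the asserted chain is exactly $r_{\otimes}(M \circ M^T)^{1/2}$; and that the maximum cycle geometric mean of any nonnegative matrix dominates each of its diagonal entries (a loop being a cycle of length one). The transpose identity is a one-line entrywise check: $(A^T \otimes B)^T_{ij} = (A^T \otimes B)_{ji} = \max_k a_{kj} b_{ki} = (B^T \otimes A)_{ij}$.

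For the left-hand inequality $\|A \circ B\| \le r_{\otimes}(M \circ M^T)^{1/2}$, I would note that the diagonal entries of $M \circ M^T$ are $(M \circ M^T)_{ii} = m_{ii}^2$ with $m_{ii} = (A^T \otimes B)_{ii} = \max_k a_{ki} b_{ki}$, whence $r_{\otimes}(M \circ M^T) \ge \max_i m_{ii}^2 = \bigl(\max_{i,k} a_{ki} b_{ki}\bigr)^2 = \|A \circ B\|^2$; taking square roots gives the claim. This is the same loop trick that proves the inequality $\|A\circ B\| \le r_{\otimes}(A^T \otimes B)$ of (\ref{kvmax}), which the present proposition refines.

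For the right-hand inequality $r_{\otimes}(M \circ M^T)^{1/2} \le r_{\otimes}(A^T \otimes B) = r_{\otimes}(M)$, I would invoke the max-algebra Elsner-type bound $r_{\otimes}(C \circ D) \le r_{\otimes}(C)\, r_{\otimes}(D)$, which follows from Corollary \ref{radius3} with $m = 2$ by taking the maximum over $i$ of both sides and using $r_{\otimes}(\cdot) = \max_i r_{e_i}(\cdot)$. Applied with $C = M$, $D = M^T$, together with the fact that reversing cycles gives $r_{\otimes}(M^T) = r_{\otimes}(M)$, this yields $r_{\otimes}(M \circ M^T) \le r_{\otimes}(M)^2$; chaining the two inequalities completes the proof. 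I do not expect a genuine obstacle: everything hinges on spotting the transpose identity and the loop estimate. The only point needing a word of justification is $r_{\otimes}(M^T) = r_{\otimes}(M)$, and one should resist deducing the right-hand inequality from (\ref{maxmixmax}) and Proposition \ref{prop3.1}, since that route only delivers the weaker bound $r_{\otimes}(M \circ M^T)^{1/2} \le \|M\|$.
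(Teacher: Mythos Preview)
Your proof is correct. For the right-hand inequality you and the paper argue identically: both invoke the Elsner-type bound $r_{\otimes}(C\circ D)\le r_{\otimes}(C)\,r_{\otimes}(D)$ together with the transpose invariance $r_{\otimes}(B^T\otimes A)=r_{\otimes}((A^T\otimes B)^T)=r_{\otimes}(A^T\otimes B)$.

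For the left-hand inequality, however, your route is genuinely different. The paper first applies Proposition~\ref{prop3.1} to obtain the \emph{equality}
\[
\|A\circ B\|^2 = r_{\otimes}\bigl((A\circ B)^T\otimes(A\circ B)\bigr)=r_{\otimes}\bigl((B^T\circ A^T)\otimes(A\circ B)\bigr),
\]
and then uses the structural inequality (\ref{ineq}) to pass to $r_{\otimes}\bigl((B^T\otimes A)\circ(A^T\otimes B)\bigr)$. You bypass both Proposition~\ref{prop3.1} and (\ref{ineq}) entirely, arguing instead from the single elementary observation that $r_{\otimes}$ dominates every diagonal entry: since $(M\circ M^T)_{ii}=m_{ii}^2$ with $m_{ii}=\max_k a_{ki}b_{ki}$, the maximum over $i$ already recovers $\|A\circ B\|^2$. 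Your argument is shorter and entirely self-contained; the paper's argument, while slightly less direct, fits the pattern used throughout Section~3 and makes transparent \emph{why} the proposition refines (\ref{kvmax}), namely that the refinement comes from inserting the intermediate Hadamard product before applying the multiplicative bound.
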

	\begin{proof} By Proposition \ref{prop3.1} and (\ref{ineq}) we have
		\[\|A \circ B \|=r_{\otimes} ^{1/2} ((B^T\circ  A^T)\otimes (A \circ B) ) \le r_{\otimes} ^{1/2} ( (B^T \otimes A) \circ (A^T \otimes B)) \le r_{\otimes} (A^T\otimes B),\]
		since $r_{\otimes} (B^T \otimes A ) =  r_{\otimes} (A^T\otimes B). $
	\end{proof}
	
	The following result that follows from (\ref{th5lih_notref}) 
	is a max algebra version of  \cite[Theorem 5]{Huang} (see also \cite{DP16}).\\

	\begin{prop}\label{norm1}
		Let $A,B$ be non-negative $n \times n$ matrices. Then 
		\[\Vert A \circ B^T\circ A \Vert \leq {\Vert A\otimes B \otimes A\Vert} \]
	\end{prop}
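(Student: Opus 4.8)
The plan is to reduce the claimed inequality $\Vert A \circ B^T \circ A \Vert \le \Vert A \otimes B \otimes A \Vert$ to the general Hadamard–norm estimate already recorded in Theorem \ref{th5H}. Indeed, taking $m=3$ with the choice $A_1 = A$, $A_2 = B^T$, $A_3 = A$ in inequality (\ref{th5lih_notref}) gives
\[
\Vert A \circ B^T \circ A \Vert^2 \le r_{\otimes}\bigl(A \otimes (B^T)^T \otimes A^T \otimes A^T \otimes B^T \otimes A\bigr) = r_{\otimes}\bigl(A \otimes B \otimes A^T \otimes A^T \otimes B^T \otimes A\bigr).
\]
Thus the whole problem collapses to showing that the right-hand side is at most $\Vert A \otimes B \otimes A \Vert^2$.

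First I would identify the six-fold max-algebra product $C := A \otimes B \otimes A^T \otimes A^T \otimes B^T \otimes A$ as (up to a cyclic rotation, using $r_{\otimes}(X \otimes Y) = r_{\otimes}(Y \otimes X)$) a product of the form $D \otimes D^T$ where $D := A^T \otimes A \otimes B \otimes A$ — one checks $D^T = A^T \otimes B^T \otimes A^T \otimes A$, so after a cyclic shift $C$ is conjugate to $D \otimes D^T$ in the sense that $r_{\otimes}(C) = r_{\otimes}(D \otimes D^T)$. Then Proposition \ref{prop3.1}, which states $\Vert D \Vert^2 = r_{\otimes}(D \otimes D^T)$, converts the spectral-radius bound into the norm statement: $r_{\otimes}(C) = \Vert D \Vert^2 = \Vert A^T \otimes A \otimes B \otimes A \Vert^2$. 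Finally, the submultiplicativity of the max-algebra norm (which is immediate from the definition of $\otimes$ and $\Vert \cdot \Vert$, and which is implicit throughout Section 3), together with $\Vert A^T \Vert = \Vert A \Vert$, gives $\Vert A^T \otimes A \otimes B \otimes A \Vert \le \Vert A \Vert \cdot \Vert A \otimes B \otimes A \Vert$; but this is off by a factor of $\Vert A \Vert$, so a cruder bound is not enough.

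The main obstacle, therefore, is getting the sharp identification of the six-term product with a square $\Vert \,\cdot\, \Vert^2$ with no stray factor. The right way to do this is to avoid submultiplicativity entirely: write $C$ after a cyclic rotation as $E \otimes E^T$ where $E := A \otimes B \otimes A$ itself. Concretely, $E^T = A^T \otimes B^T \otimes A^T$, and $E \otimes E^T = A \otimes B \otimes A \otimes A^T \otimes B^T \otimes A^T$; this is exactly the cyclic rotation of $C = A \otimes B \otimes A^T \otimes A^T \otimes B^T \otimes A$ obtained by moving the final factor $A$ to the front, so $r_{\otimes}(C) = r_{\otimes}(A \otimes C \otimes A^{-1}\text{-type rotation}) = r_{\otimes}(E \otimes E^T)$ by the rotation invariance of $r_{\otimes}$. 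Then Proposition \ref{prop3.1} applied to $E$ gives $r_{\otimes}(E \otimes E^T) = \Vert E \Vert^2 = \Vert A \otimes B \otimes A \Vert^2$ on the nose.

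Putting the pieces together, the proof is short: apply (\ref{th5lih_notref}) with $m=3$ and $(A_1,A_2,A_3) = (A, B^T, A)$ to get $\Vert A \circ B^T \circ A \Vert^2 \le r_{\otimes}(A \otimes B \otimes A^T \otimes A^T \otimes B^T \otimes A)$; rotate cyclically so the product reads $(A \otimes B \otimes A) \otimes (A \otimes B \otimes A)^T$, using $r_{\otimes}(X \otimes Y) = r_{\otimes}(Y \otimes X)$ and $(A\otimes B\otimes A)^T = A^T\otimes B^T\otimes A^T$; then invoke Proposition \ref{prop3.1} with the matrix $A \otimes B \otimes A$ in place of $A$ to obtain $r_{\otimes}\bigl((A\otimes B\otimes A)\otimes(A\otimes B\otimes A)^T\bigr) = \Vert A \otimes B \otimes A \Vert^2$; take square roots. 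The one point needing care is the bookkeeping of transposes and the cyclic shift, so I would double-check the index pattern in (\ref{th5lih_notref}) against $m=3$ explicitly before concluding.
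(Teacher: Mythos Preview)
Your approach is exactly the paper's: apply the odd case of Theorem~\ref{th5H} with $(A_1,A_2,A_3)=(A,B^T,A)$ and then invoke Proposition~\ref{prop3.1} with $E=A\otimes B\otimes A$. However, your intermediate expression contains a transpose bookkeeping slip. For $m=3$ the pattern in (\ref{th5lih_notref}) reads $A_1\otimes A_2^T\otimes A_3\otimes A_1^T\otimes A_2\otimes A_3^T$, so with your substitution one obtains
\[
r_{\otimes}\bigl(A\otimes B\otimes A\otimes A^T\otimes B^T\otimes A^T\bigr),
\]
not $r_{\otimes}(A\otimes B\otimes A^T\otimes A^T\otimes B^T\otimes A)$ as you wrote (you transposed positions~3 and~6). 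The correct expression is \emph{already} $E\otimes E^T$ on the nose, so no cyclic rotation is needed; your claimed rotation ``move the final $A$ to the front'' would in fact produce $A\otimes A\otimes B\otimes A^T\otimes A^T\otimes B^T$, which is not $E\otimes E^T$. Once the indices are fixed, your proof and the paper's are identical --- and your closing caveat about double-checking the index pattern was well placed.
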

	
	\begin{proof} %
		It follows from \cite[Theorem 5.4]{RLP19} and  Proposition \ref{prop3.1} that
		$$\Vert A \circ B^T\circ A \Vert  \le  r_{\otimes}  ^{\frac{1}{2}} (A  \otimes  B  \otimes  A  \otimes  A ^T   \otimes B^T  \otimes  A^T)  
		={\Vert A\otimes B \otimes A\Vert}, $$
		which completes the proof.
	\end{proof}

\subsection*{Acknowledgement} 

The authors thank the reviewer for his comments, suggestions and careful reading that improved the article considerably.
	
\subsection*{Funding} 	
This first author and the third author were supported  by the Department of Mathematical Sciences at
Isfahan University of Technology, Iran.  The second author acknowledges a partial support of the Slovenian Research Agency (grants P1-0222, J1-8133, J1-8155, J2-2512 and N1-0071).
%


 
 
 \bigskip

\noindent
Seyed Mahmoud Manjegani \\
Department of Mathematical Sciences\\
Isfahan University of Technology\\
Isfahan, Iran 84156-83111\\
e-mail: manjgani@iut.ac.ir
\bigskip

\noindent Aljo\v sa Peperko \\
Faculty of Mechanical Engineering \\
University of Ljubljana \\
A\v{s}ker\v{c}eva 6\\
SI-1000 Ljubljana, Slovenia\\
{\it and} \\
Institute of Mathematics, Physics and Mechanics \\
Jadranska 19 \\
SI-1000 Ljubljana, Slovenia \\
e-mail:  aljosa.peperko@fs.uni-lj.si \\
$*$ Corresponding author

\bigskip

\noindent Hojr Shokooh Saljooghi \\
Department of Mathematical Sciences\\
Isfahan University of Technology\\
Isfahan, Iran 84156-83111\\ University of Technology\\
e-mail: h.shokooh@math.iut.ac.ir \\
 
\end{document}